\numberwithin{equation}{section}
\theoremstyle{plain}
\newtheorem{thm}{Theorem}[section]
\newtheorem{cor}[thm]{Corollary}
\newtheorem{lemma}[thm]{Lemma}
\newtheorem{prop}[thm]{Proposition}
\newtheorem*{thma}{Theorem A}
\newtheorem*{thmb}{Theorem B}
\theoremstyle{definition}
\newcommand{\dlabel}[1]{\ifmmode \text{\ttfamily \upshape [#1] } \else
{\ttfamily \upshape [#1] }\fi \label{#1} }
\newcommand{\C}{\operatorname{C} }
\newcommand{\K}{\operatorname{K} }
\newcommand{\Z}{\operatorname{Z} }
\newcommand{\gen}[1]{\left < #1 \right >}
\begin{document}
\baselineskip 15pt

\title{Commutators in groups of order $p^7$ }

\author{Rahul Kaushik}

\address{School of Mathematics, Harish-Chandra Research Institute, HBNI,
Chhatnag Road, Jhunsi, Allahabad - 211 019, INDIA}

\email{rahulkaushik@hri.res.in}

\author{Manoj K.~Yadav}

\address{School of Mathematics, Harish-Chandra Research Institute, HBNI, 
Chhatnag Road, Jhunsi, Allahabad - 211 019, INDIA}

\email{myadav@hri.res.in}

\subjclass[2010]{Primary 20D15, 20F12}
\keywords{commutator, commutator subgroup, $p$-group}
\begin{abstract}
We present a characterisation of groups $G$ of order $p^7$, $p$ prime,  in which  not all elements of the commutator subgroup $\gamma_2(G)$ of $G$ are commutators in  $G$. On the way we obtain several structural results on groups of order $p^7$. 
\end{abstract}

\maketitle

\section{Introduction}	
	
	Let $G$ be a finite group and $\K(G) := \{[a,b] \mid a, b \in G \}$.  It is well known that the commutator subgroup $\gamma_2(G)$  of $G$ is generated by $\K(G)$.  A natural question that has attracted the attention of many mathematicians over last many many decades is whether $\gamma_2(G)$  is equal to $\K(G)$ or not for groups $G$ in a given class of groups. In 1899, G.A. Miller \cite{gM99} proved that $\K(G) = \gamma_2(G)$ for all alternating groups $G = A_n$, $n \ge 5$. This result was reinvented by N. Ito \cite{nI51} and O. Ore  \cite{oO51} in 1951, and Ore  conjectured that the same is true for all finite simple groups. With the efforts of several people over the years, the conjecture was proved in 2010 \cite{LOST10}.  It is also true that $\K(G) = \gamma_2(G)$ for most of finite quasi-simple groups \cite{LOST11}. Furthermore, the commutator length of all finite quasi-simple groups is  at most $2$. The commutator length of a finite group $G$ is the smallest positive integer $m$ such that  every element of  $\gamma_2(G)$ can be written as a product of at most $m$ commutators in $G$.
	
Interest in the above question for non-perfect groups  also goes back to 1902 when W.B. Fite \cite{wF02} proved that if a finite $p$-group $G$ of nilpotency class $2$  is such that $G/\Z(G)$ is minimally generated by three elements, then $\K(G) = \gamma_2(G)$.  He also presented examples of $2$-groups $G$ of nilpotency class $2$ such that  $G/\Z(G)$ is minimally generated by four elements and $\K(G) \ne \gamma_2(G)$.  The smallest such $2$-group is of order $2^8$. There do exist such groups of order $p^8$ for all odd primes \cite[Theorem A]{MR}. In 1953, K. Honda \cite{kH53} re-invented, in a slightly different form,  the famous character sum formula of F.G. Frobenius for determining whether a given element of a finite group is a commutator or not. This result was used to prove that if, for a finite group $G$, $\gamma_2(G)$ is generated by a commutator, then each generator of $\gamma_2(G)$ is a commutator.  D.M.  Rodney  \cite{dmR73,  dmR77}  proved that  if a nilpotent group $G$ has cyclic commutator subgroup,  or if  $G$ is finite and  $\gamma_2(G)$ is elementary abelian of order $p^3$ for any prime integer $p$,  then $\K(G) = \gamma_2(G)$.  This result was further generalised by  R. Guralnick \cite{rmG82}, who proved that if $G$ is a  finite group whose  $\gamma_2(G)$  is an abelian $p$-group minimally generated by at most $3$ elements,  where $p \geq 5$,  then $\K(G) = \gamma_2(G)$.   

Coming to $p$-groups, Fern\'{a}ndez-Alcober and De las Heras \cite{FAH19}  proved that $\K(G) = \gamma_2(G)$ for  finite $p$-group $G$ such that $\gamma_2(G)$ is generated by $2$ elements.  This result was further extended  by De las Heras \cite{iH20} for  finite $p$-groups $G$,  $p \geq 5$,  such that $\gamma_2(G)$ is generated by $3$ elements. No more generalisation is possible in terms of the rank of the commutator subgroup of  finite $p$-groups.    Kappe and Morse  \cite{KM05},  constructed examples of group $G$ of order $p^6$, $p$ odd, such that $\gamma_2(G)$ is elementary of order $p^4$, but $\K(G) \ne \gamma_2(G)$. In the same paper, they proved that $\K(G) = \gamma_ 2(G)$ for all $p$-groups of order at most $p^5$ and for all $2$-groups of order at most $2^6$.  They  also exhibited groups of  order $2^7$ for which $\K(G) \neq \gamma_2(G)$.  Classification of finite $p$-groups $G$ (upto isoclinism) with  $\gamma_2(G)$ of order  $p^4$ and exponent $p$ such that  $\K(G) \ne \gamma_2(G)$ is provided in \cite{MR} by the present authors. As a result, a clear characterisation of groups $G$ of order $p^6$ such that $\K(G) \ne \gamma_2(G)$ now exists.  These are the groups which fall in the isoclinism class $\Phi(23)$. As classified in \cite{rJ80}, there are total $43$ isoclinism classes of groups of order $p^6$, $p$ odd prime.  A detailed historical survey on the above mentioned  question  is available in \cite{KM07}.

Apart from these results, to the best of our knowledge, nothing is known about finite $p$-groups $G$ with $\K(G) = \gamma_ 2(G)$. And the theory developed so far seems revealing no patterns.  So it is highly desirable to investigate more classes of finite $p$-groups.   In this article we take up this investigation for groups of order $p^7$,  and prove the following result. 
\begin{thma}\label{mainthm}
Let $G$ be a group of order $p^7$,  $p \geq 5$.  Then the following statements hold:

{\rm (1)} If $d(\gamma_2(G)) \le 3$ or the nilpotency class of $G$ is $6$, then $\K(G) = \gamma_2(G)$.

{\rm (2)}  If $|\gamma_2(G)| = p^4$ and $d(\gamma_2(G)) = 4$, then $\K(G) \neq \gamma_2(G)$ if and only if  the nilpotency class of $G$ is either $3$ or $4$ and $|\Z(G)| = p^3$.

{\rm (3)} If the nilpotency class of $G$ is $5$, $|\gamma_2(G)| = p^5$, $d(\gamma_2(G)) = 4$ and $\exp(\gamma_2(G)) = p^2$, then $\K(G) \neq \gamma_2(G)$.

{\rm (4)}  If the nilpotency class of $G$ is $4$, $|\gamma_2(G)| = p^5$, $d(\gamma_2(G)) \ge 4$, then $\K(G) \neq \gamma_2(G)$ if and only if  there exists a subgroup $H \leq Z(G)$ of order $p$ such that $|Z(G/H)|=p^2$.

{\rm (5)}  If the nilpotency class of $G$ is $5$ and $\gamma_2(G)$ is elementary of order  $p^5$,    then $\K(G) \neq \gamma_2(G)$.

{\rm (6)}  If the nilpotency class of $G$ is $5$ and $\gamma_2(G)$ is non-abelian of order $p^5$ and exponent $p$,  then $\K(G) \neq \gamma_2(G)$ if and only if  $|\gamma_5(G)| = p$ and  $|\Z_2(G)| = p^3$.

Moreover,   the commutator  length of $G$ is at most $2$.   
   
\end{thma}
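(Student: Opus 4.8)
The plan is to reduce the claim to a purely combinatorial covering statement and then feed in, case by case, the commutator counts already produced while proving parts (1)--(6). If $\K(G) = \gamma_2(G)$, then every element of $\gamma_2(G)$ is itself a single commutator and the commutator length equals $1$, so nothing needs to be done. Hence it suffices to treat the groups for which $\K(G) \neq \gamma_2(G)$, namely those falling under the ``bad'' alternatives of parts (2)--(6), and to show for each of them that every element of $\gamma_2(G)$ is a product of at most two commutators.

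The main tool is the elementary sumset lemma: if $N$ is a finite group and $A \subseteq N$ is symmetric (that is $A = A^{-1}$) with $|A| > \tfrac12 |N|$, then $A \cdot A = N$. Indeed, for $g \in N$ the two subsets $A$ and $gA^{-1}$ each have cardinality $|A| > \tfrac12 |N|$, so they must intersect; picking $a \in A$ with $a = g b^{-1}$ for some $b \in A$ gives $g = ab \in A\cdot A$. Applying this with $N = \gamma_2(G)$ and $A = \K(G)$, which is symmetric because $[x,y]^{-1} = [y,x]$, we see that it is enough to verify the single numerical inequality
\[
|\K(G)| > \tfrac12\,|\gamma_2(G)|
\]
in each of the bad cases. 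This converts the ``length $\le 2$'' assertion into a lower bound on the number of commutators. (The Frobenius--Honda character-sum criterion recalled in the introduction could in principle be used instead, but the counting route is more uniform across the cases.)

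To obtain this bound I would reuse the structural analysis underlying parts (2)--(6): in each bad case the proof that $\K(G) \neq \gamma_2(G)$ already locates the non-commutators, exhibiting $\gamma_2(G) \setminus \K(G)$ inside a union of cosets of a proper subgroup and so of size at most $|\gamma_2(G)|/p$. Granting this localisation, one gets $|\K(G)| \ge |\gamma_2(G)|\,(1 - 1/p) = \tfrac{p-1}{p}\,|\gamma_2(G)|$, and since $p \ge 5$ we have $\tfrac{p-1}{p} > \tfrac12$, so the displayed inequality holds and the sumset lemma finishes the argument. The hard part is precisely the confinement of the non-commutators: for the cases with $|\gamma_2(G)| = p^5$ (parts (3), (5) and the non-abelian case (6)) the image of the commutator map is most delicate to control, and one must argue that the elements missed by $\K(G)$ cannot spread beyond a single coset layer. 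Once that confinement is established in every bad case, the slack afforded by $p \ge 5$ makes the covering automatic, and the commutator length of $G$ never exceeds $2$.
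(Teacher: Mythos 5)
Your proposal addresses only the last sentence of the theorem: parts (1)--(6) are taken as given (``the commutator counts already produced while proving parts (1)--(6)''), so even granting every step, what you have is at best a proof of the ``moreover'' clause, not of the statement. Within that clause, the sumset lemma itself is correct ($\K(G)$ is symmetric and the pigeonhole argument is standard), but the entire proof hinges on the inequality $|\K(G)| > \tfrac12 |\gamma_2(G)|$, which you propose to obtain by confining $\gamma_2(G) \setminus \K(G)$ to a set of size at most $|\gamma_2(G)|/p$. That confinement is never proved, and --- this is the genuine gap --- it is \emph{not} something the proofs of parts (2)--(6) produce. In the paper, every instance of $\K(G) \neq \gamma_2(G)$ is established by exhibiting a quotient $\bar G = G/H$ of order $p^6$ with $\K(\bar G) \neq \gamma_2(\bar G)$ (Lemma \ref{p6}) and lifting via Lemma \ref{quotient}. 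Such an argument yields no upper bound on the number of non-commutators of $G$; if anything it yields a \emph{lower} bound, since every preimage in $\gamma_2(G)$ of a non-commutator of $\bar G$ is a non-commutator of $G$, so $\gamma_2(G) \setminus \K(G)$ contains $|H|$ disjoint translates of $\gamma_2(\bar G) \setminus \K(\bar G)$. Whether the proportion of non-commutators stays below $1/p$ (or even below $1/2$) in each of the bad cases is exactly the quantitative question your argument needs, and it would have to be settled from scratch, case by case, by a fine analysis of the commutator map; you explicitly defer this ``hard part,'' so the proposal is a strategy with its central step missing.

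For contrast, the paper reaches the length-two conclusion with no counting at all, via Lemma \ref{cmlength}: if $H$ is a normal subgroup of $G$ contained in $\K(G)$ with $\K(G/H) = \gamma_2(G/H)$, then every $w \in \gamma_2(G)$ is $[x,y]h$ with $h \in H \subseteq \K(G)$, hence a product of two commutators. In each bad case such an $H$ is produced explicitly: when $\gamma_2(G)$ is abelian, $H = [\alpha_i, \gamma_2(G)]$ is a normal subgroup consisting entirely of commutators (since $[\alpha_i, uv] = [\alpha_i, u][\alpha_i, v]$ for $u, v \in \gamma_2(G)$) of order at least $p^2$, so $|G/H| \le p^5$ and $\K(G/H) = \gamma_2(G/H)$ by Kappe--Morse; when $\gamma_2(G)$ is non-abelian one takes $H = [\beta_1, G] = \gamma_4(G)$ (see the proofs of Lemma \ref{cls5}(iv) and Theorem \ref{eabcls4}). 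If you want to salvage your approach, you would need to replace the unproven density claim by this kind of soft, checkable algebraic condition.
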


While proving this theorem, we obtain many interesting structural results on groups of order $p^7$. 
We use GAP \cite{GAP} for handing the remaining cases of $p$, that is,  $2$ and $3$, and derive the following conclusion.

\begin{thmb}
For a group $G$ of order $2^7$, $\K(G) \neq \gamma_2(G)$ if and only if $G$ is of nilpotency class $3$,  $\gamma_2(G)$ is a  $3$-generated abelian subgroup of order $2^4$  and $|Z(G)|=2^3$.  

Let $G$ be a group of order $3^7$.  Then $K(G) \neq \gamma_2(G)$ if and only if one of the following holds:

{\rm (i)}  The  nilpotency class of $G$ is $5$,  $\gamma_2(G)$ is a $3$-generated non-abelian subgroup,  $|\gamma_5(G)| = 3$ and $|\Z_2(G)| = 3^3$. 

{\rm (ii)}  The  nilpotency class of $G$ is $5$, $\gamma_2(G)$ is a $3$-generated abelian subgroup of order $3^5$ and exponent $3^2$  and $|\Z(G)| \leq 3^2$.

{\rm (iii)}  The  nilpotency class of $G$ is $4$, $\gamma_2(G)$ is a $3$-generated abelian subgroup of order $3^4$ and $|\Z(G)| = 3^3$. 

{\rm (iv)}  The  nilpotency class of $G$ is $4$,  $\gamma_2(G)$ is a $4$-generated  abelian  subgroup of exponent $3^2$. 

{\rm (v)}    The  nilpotency class of $G$ is $3$,  $\gamma_2(G)$ is a $4$-generated elementary abelian subgroup and $|\Z(G)| = 3^3$.

Moreover,   the commutator  length of $G$ is at most $2$. 
\end{thmb}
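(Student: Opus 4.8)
The statement for $p\in\{2,3\}$ is a finite computation, so the plan is to verify it exhaustively in GAP \cite{GAP}, using the fact that all $2328$ groups of order $2^7$ and all $9310$ groups of order $3^7$ are available in its group libraries. For each group $G$ I would first record the invariants occurring in the statement: the nilpotency class, the order and minimal number of generators $d(\gamma_2(G))$, the exponent and the type (abelian or not) of $\gamma_2(G)$, and the orders $|\Z(G)|$, $|\Z_2(G)|$ and $|\gamma_5(G)|$. This lets me sort every isomorphism type into the buckets (i)--(v) for $p=3$ and the corresponding family for $p=2$, so that checking each stated equivalence reduces to deciding, within each bucket, exactly which groups satisfy $\K(G)=\gamma_2(G)$.

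The core step is that decision. Enumerating all $|G|^{2}$ pairs $(a,b)$ is wasteful; instead I would use that for fixed $a$ the set $\{[a,b]\mid b\in G\}$ is the translate $a^{-1}a^{G}$ of the conjugacy class $a^{G}$, since $[a,b]=a^{-1}a^{b}$, whence
\[
\K(G)=\bigcup_{a\in G} a^{-1}a^{G}.
\]
As $\K(G)$ is a conjugation-invariant subset of $\gamma_2(G)$, it suffices to loop over conjugacy-class representatives $a$, form the translates $a^{-1}a^{G}$, and take their conjugation-closure; this lowers the cost to roughly $|G|$ per group and produces $\K(G)$ as an explicit set. Because $\K(G)\subseteq\gamma_2(G)$ always holds, one has $\K(G)=\gamma_2(G)$ precisely when $|\K(G)|=|\gamma_2(G)|$. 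As an independent check I would confirm selected cases with the Frobenius character-sum formula reinvented by Honda, namely that $g\in\K(G)$ if and only if $\sum_{\chi\in\Irr(G)}\chi(g)/\chi(1)\neq 0$.

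Having $\K(G)$ as a set makes the commutator-length claim immediate: for each $G$ with $\K(G)\neq\gamma_2(G)$ I would form the product set $\K(G)\cdot\K(G)$ and verify that it equals $\gamma_2(G)$, so that every element of $\gamma_2(G)$ is a product of at most two commutators; for the remaining groups $\K(G)=\gamma_2(G)$ already gives length at most $1$. Collecting the groups for which $\K(G)\neq\gamma_2(G)$ and reading off their invariants then yields both directions of every equivalence in the statement.

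The principal obstacle is the scale of the $p=3$ case: there are $9310$ groups, each of order $2187$, so the search must be organised around the conjugacy-class reduction above rather than brute force, and the bookkeeping that assigns groups to buckets must be trusted to separate the isomorphism types correctly. A further subtlety is that the $p=3$ answer is not a mere specialisation of Theorem~A: conditions such as $|\Z(G)|\le 3^2$ in item (ii), which are absent from the uniform $p\ge 5$ description, mean that the boundary cases genuinely differ, so these are exactly the places to scrutinise, since a single misclassified group would falsify an ``if and only if''. Comparing the raw GAP output against the $p\ge 5$ pattern of Theorem~A provides a useful sanity check on the whole computation.
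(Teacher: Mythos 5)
Your proposal is correct and matches the paper's approach: the paper establishes Theorem B purely by an exhaustive GAP computation over the group libraries for orders $2^7$ and $3^7$, which is exactly what you describe. Your conjugacy-class reduction $\K(G)=\bigcup_a a^{-1}a^G$ and the product-set check $\K(G)\cdot\K(G)=\gamma_2(G)$ for the commutator-length claim are sensible implementation details of that same computation, which the paper itself leaves unspecified.
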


Let  $1 \to H \to E \to G \to 1$ be a group extension.  Notice that if $\K(G) \ne \gamma_2(G)$, then $\K(E) \ne \gamma_2(E)$ (see Lemma \ref{quotient}). But the converse is not true in general. Indeed, there do exist groups $E$ such that $\K(E) \ne \gamma_2(E)$, but all proper quotient groups $E/H$ of $E$ satisfy $\K(E/H) = \gamma_2(E/H)$. All groups of  order $p^6$ from $\Phi(23)$ are such.  It follows that the  groups $G$ of nilpotency class $3$ with $|\Z(G)| = p^3$ occurring in Theorem A (2) are also such groups. But, as it follows from the results below,  all other  groups $G$ of order $p^7$ with $\K(G) \ne \gamma_2(G)$ are central extensions of groups of order $p^6$ (from the isoclinism class $\Phi(23)$) by a subgroup of order $p$.  On the basis of the results of this paper and other known results in the literature,  the following seems to hold for all finite $p$-groups $G$ : 
  {\it $\K(G) = \gamma_2(G)$ if and only if $\Z(G/N) \cap \gamma_2(G/N) \subseteq \K(G/N)$ for all normal subgroups $N$ of $G$ (including the trivial one)}.

A brief layout of the paper is as follows. Section $2$   contains  prerequisites, including some number theoretic observations. Groups of  nilpotency class $5$ under consideration are studied in Section $3$, and that of class $4$ in Section $4$. Proof of Theorem  A is presented in Section $5$, where a variety of examples illustrating the theory are also presented. 

We conclude this section by setting some notations.  For two elements $x$ and $y$ of a group $G$,  $x^y$ denotes  the conjugate $y^{-1}xy$ of $x$ by $y$, and $[x, y]$ denotes the commutator $x^{-1}y^{-1}xy$  of $x$ and $y$ in $G$.  Left normed higher  commutators  are defined iteratively, for example $[x, y, z] := [[x, y], z]$. Centraliser of an element  $x \in G$ (a subgroup $H$ of $G$) is denoted by
$\C_G(x)$  ($\C_G(H)$). Terms of the lower and upper central series of $G$ are denoted by $\gamma_i(G)$ and $\Z_i(G)$ respectively.  For a finite $p$-group $G$, minimally generated by two elements, say $\alpha_1$ and $\alpha_2$, the following notations for commutators will be used throughout:  $\beta= [\alpha_1, \alpha_2]$,  $\beta_i= [\beta, \alpha_i]$,  $\eta_{ij}=[\beta_i, \alpha_j]$ and $\xi_{ijk}=[\eta_{ij},\alpha_k]$,  where $i,j,k \in \{1,2\}$. The number of elements in a  minimal  generating set of a finite group $G$ is denoted by  $d(G)$.  


\section{Prelamanaries}

The following concept of isoclinism of groups is due to P. Hall \cite{pH40}.
Let $X$ be a  group and $\bar{X} := X/\Z(X)$.  Then commutation in $X$ gives a well defined map $a_{X} : \bar{X} \times \bar{X} \mapsto \gamma_{2}(X)$ such that
$a_{X}(x\Z(X), y\Z(X)) = [x,y]$ for $(x,y) \in X \times X$.  Two  groups $G$ and $H$ are called \emph{isoclinic} if  there exists an  isomorphism $\phi$ of the factor group
$\bar G = G/\Z(G)$ onto $\bar{H} = H/\Z(H)$,  and an isomorphism $\theta$ of the subgroup $\gamma_{2}(G)$ onto  $\gamma_{2}(H)$ such that the following diagram is commutative
\[
 \begin{CD}
   \bar G \times \bar G  @>a_G>> \gamma_{2}(G)\\
   @V{\phi\times\phi}VV        @VV{\theta}V\\
   \bar H \times \bar H @>a_H>> \gamma_{2}(H).
  \end{CD}
\]
The resulting pair $(\phi, \theta)$ is called an \emph{isoclinism} of $G$  onto $H$. Notice that isoclinism is an equivalence relation among  groups.  The equivalences classes under isoclinism are called \emph{isoclinism classes}. The following result follows from \cite{pH40}.

\begin{lemma}\label{hlemma}
In each isoclinism class of groups there exists a group $G$ such that $\Z(G) \le \gamma_2(G)$.
\end{lemma}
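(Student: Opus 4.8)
The plan is to realise the desired representative as a group of minimal order inside its isoclinism class, and to show that minimality forces $\Z(G) \le \gamma_2(G)$ (such a $G$ is a \emph{stem group}). Since $G/\Z(G)$ and $\gamma_2(G)$ are invariants of the class, within a fixed class $|G| = |G/\Z(G)|\cdot|\Z(G)|$ is minimised exactly when $|\Z(G)|$ is minimised; moreover $\Z(G)\cap\gamma_2(G)$ is itself an invariant of the class (the isoclinism $(\phi,\theta)$ carries commutators to commutators, hence is compatible with the map $\gamma_2(G)\to\bar G$ whose kernel is $\Z(G)\cap\gamma_2(G)$). Thus $|\Z(G)|\ge|\Z(G)\cap\gamma_2(G)|$ always, with equality precisely when $\Z(G)\le\gamma_2(G)$, and it suffices to produce, from any $G$ with $\Z(G)\not\le\gamma_2(G)$, an isoclinic group of strictly smaller order.

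First I would record two elementary operations preserving the class. (i) If $N\le\Z(G)$ with $N\cap\gamma_2(G)=1$, then the natural map identifies $\gamma_2(G)$ with $\gamma_2(G/N)$ and one checks $\Z(G/N)=\Z(G)/N$, so $G/N$ is isoclinic to $G$; this strictly lowers the order when $N\ne1$. (ii) If $H\le G$ satisfies $H\Z(G)=G$, then $\Z(H)=H\cap\Z(G)$ and $\gamma_2(H)=\gamma_2(G)$ (commutators are insensitive to central factors), whence $H$ is isoclinic to $G$; this strictly lowers the order when $H$ is a proper supplement of $\Z(G)$, which exists as soon as $\Z(G)\not\le\Phi(G)$. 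Operation (i) applies whenever $\Z(G)\cap\gamma_2(G)$ fails to be essential in $\Z(G)$ (i.e.\ fails to meet every nontrivial subgroup), and (ii) applies whenever some central element lies outside $\Phi(G)$; between them they dispose of every case in which the excess centre $\Z(G)/(\Z(G)\cap\gamma_2(G))$ splits off as a direct factor.

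The main obstacle is the remaining entangled case, in which $\Z(G)\cap\gamma_2(G)$ is essential in $\Z(G)$ and $\Z(G)\le\Phi(G)$ simultaneously; the basic example is a group with cyclic centre $\gen{z}\cong\Z/p^2$ where $z\notin\gamma_2(G)$ but $z^p\in\gamma_2(G)$ is realised as a central $p$th power. Here neither (i) nor (ii) succeeds, and in fact no passage to a subgroup, quotient, or abelian-direct-factor modification of $G$ can work, since all such operations preserve the property that the relevant generator of $\gamma_2(G)$ is a $p$th power of a central element, whereas in a stem group it is not. To overcome this I would abandon reductions internal to $G$ and reconstruct the representative directly from the invariants $Q:=G/\Z(G)$ and the commutator pairing $a_G\colon \bar G\times\bar G\to\gamma_2(G)$. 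Viewing $G$ as a central extension $1\to\Z(G)\to G\to Q\to 1$, I would decompose its class in $\Ho^2(Q,\Z(G))$ through the universal-coefficient sequence into a \emph{commutator part}, valued in $\Hom(\Ho_2(Q),-)$ and fixed by the isoclinism type, and a \emph{power part}, valued in $\operatorname{Ext}(Q^{\mathrm{ab}},-)$, which governs only how $p$th powers lift into the centre and leaves $a_G$ untouched.

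I would then realise the given commutator part by a central extension whose kernel is exactly $\Z(G)\cap\gamma_2(G)$ and whose power part is trivial; the resulting group $G^{\ast}$ has the same $Q$ and the same pairing, hence is isoclinic to $G$, while its centre, being generated by the image of the commutator pairing, lies inside $\gamma_2(G^{\ast})$. This produces a stem representative outright and simultaneously exhibits it as a group of minimal order. The crux—and the step I expect to be most delicate—is precisely the verification that stripping the power part leaves the commutator pairing, and hence the isoclinism type, unchanged, and that the minimised centre indeed lands inside the derived subgroup; this is where the analysis of Hall \cite{pH40} does the substantive work.
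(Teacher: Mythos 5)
The paper offers no proof of this lemma at all: it is simply attributed to Hall \cite{pH40}, so your proposal has to be measured against the classical stem-group theorem rather than against an argument in the text. Your route is sound, and it is essentially the modern cohomological proof of that theorem (the style of Beyl--Tappe) rather than Hall's original free-presentation argument. The preparatory material is all correct: $\Z(G)\cap\gamma_2(G)$ is indeed an isoclinism invariant, since $\theta$ is compatible with the natural map $\gamma_2(G)\to G/\Z(G)$ exactly as you say; your operations (i) and (ii) are the standard isoclinism-preserving reductions, and your verifications $\Z(G/N)=\Z(G)/N$ and $\Z(H)=H\cap\Z(G)$ are right. Your diagnosis of the residual case is also accurate and not merely illustrative: for $G=\gen{a,b\mid a^{p^3}=b^p=1,\ [a,b]=a^{p^2}}$ one has $\Z(G)=\gen{a^p}\cong\mathbb{Z}/p^2\le\Phi(G)$ with $\Z(G)\cap\gamma_2(G)=\gen{a^{p^2}}$ essential, and no subgroup, quotient, or central-product-plus-quotient of $G$ can be a stem group, because a generator of an order-$p$ direct factor of an abelian group can never be a $p$-th power in that group; so the extension genuinely has to be rebuilt, as you claim.

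What keeps this from being a complete proof is that you defer the crux rather than discharge it, so let me record that it does go through. A class in $\operatorname{Ext}(Q^{\mathrm{ab}},A)\subseteq\Ho^2(Q,A)$ is represented by a cocycle inflated from a symmetric normalized cocycle on $Q^{\mathrm{ab}}$; such a cocycle is symmetric in its arguments and vanishes whenever either argument lies in $\gamma_2(Q)$, and a direct computation in the crossed-product model shows it drops out of the commutator of any two lifts and of any product of such commutators. Hence two central extensions of $Q$ by the \emph{same} kernel whose classes differ by an Ext-element have literally the same $\gamma_2$, the same pairing, and the same center, so they are isoclinic. Now take $K=\Z(G)\cap\gamma_2(G)$, let $\chi'\colon\Ho_2(Q)\to K$ be the transgression of $1\to\Z(G)\to G\to Q\to 1$ corestricted to its image (that this image is exactly $K$ is the five-term exact sequence), and choose by universal coefficients any $\xi^*\in\Ho^2(Q,K)$ with $\Hom$-component $\chi'$ --- triviality of the power part is not actually needed. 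If $G^*$ is the resulting extension, then pushing $\xi^*$ forward along $K\hookrightarrow\Z(G)$ yields a class with the same $\Hom$-component as that of $G$, hence differing from it by an Ext-element; since that pushforward is a central product of $G^*$ with the abelian group $\Z(G)$, which preserves the isoclinism type, $G^*$ is isoclinic to $G$. Surjectivity of $\chi'$ gives $K\subseteq\gamma_2(G^*)$ by the five-term sequence again, and then $|\Z(G^*)|=|G^*|/|Q|=|K|$ forces $\Z(G^*)=K\le\gamma_2(G^*)$. Two caveats: your opening minimal-order frame (and the counting just used) presupposes the isoclinism family contains a finite group, so for the lemma as stated for arbitrary groups you should lead with the direct construction and argue $\Z(G^*)=K$ via the isoclinism itself; this costs nothing here, where only finite $p$-groups are ever used.
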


The following three results are from  \cite[Lemma 2.2, Lemma 2.5, Lemma 4.1]{MR}. 
 \begin{lemma}\label{prelemma}
Let $G$ and $H$ be two isoclinic finite $p$-groups.  Then  $\K(G) = \gamma_2(G)$ if and only if $\K(H) = \gamma_2(H)$.
\end{lemma}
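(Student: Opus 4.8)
The plan is to reduce the whole statement to the commutator map $a_G$ and then transport the relevant set through the isoclinism. The key observation is that $\K(G)$ is exactly the image of $a_G$, so that the property $\K(G) = \gamma_2(G)$ can be read off entirely from the data $(\br{G}, \gamma_2(G), a_G)$ that isoclinism preserves. Once this is in place, the equivalence becomes a short diagram chase.

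First I would record that $\K(G) = a_G(\br{G} \times \br{G})$. For $x, y \in G$ and $z_1, z_2 \in \Z(G)$ one has $[xz_1, yz_2] = [x,y]$, since $z_1, z_2$ are central; thus $[x,y]$ depends only on the cosets $x\Z(G)$ and $y\Z(G)$, which is precisely what makes $a_G$ well defined, and its image is the full set of commutators $\K(G)$.

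Next I would invoke the commuting square defining the isoclinism $(\phi, \theta)$: for all $\bar{x}, \bar{y} \in \br{G}$ we have $\theta(a_G(\bar{x}, \bar{y})) = a_H(\phi(\bar{x}), \phi(\bar{y}))$. Since $\phi$ is an isomorphism, the pair $(\phi(\bar{x}), \phi(\bar{y}))$ ranges over all of $\br{H} \times \br{H}$ as $(\bar{x}, \bar{y})$ ranges over $\br{G} \times \br{G}$, whence
\[
\theta(\K(G)) = \theta\big(a_G(\br{G} \times \br{G})\big) = a_H(\br{H} \times \br{H}) = \K(H).
\]
Finally, $\theta$ is an isomorphism of $\gamma_2(G)$ onto $\gamma_2(H)$, so it carries $\gamma_2(G)$ bijectively onto $\gamma_2(H)$; combined with $\theta(\K(G)) = \K(H)$ this yields $\K(G) = \gamma_2(G)$ if and only if $\K(H) = \gamma_2(H)$.

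The argument is essentially formal, and the only step requiring genuine care is the identity $\K(G) = a_G(\br{G} \times \br{G})$: one must verify that the set of commutators coincides with the image of the commutator map, which rests on commutators being constant on cosets modulo the centre. I do not expect any obstacle of substance beyond this verification, and the isomorphism properties of $\phi$ and $\theta$ do the rest.
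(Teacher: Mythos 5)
Your proof is correct and complete: the identification $\K(G) = a_G(\br{G}\times\br{G})$, the transport $\theta(\K(G)) = \K(H)$ via the commuting square, and the injectivity of $\theta$ together give the equivalence with no gaps. Note that the paper itself does not prove this lemma but cites it from \cite[Lemma 2.2]{MR}; your argument is the standard transport-of-structure proof that this reference uses, so there is nothing genuinely different to compare.
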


\begin{lemma} \label{prelemma2}
Let $G$ be a finite group and $H\leq \gamma_2(G) \cap Z(G)$. If there exist $x_1,x_2, \ldots , x_n$ such that $\gamma_2(G)/H= \bigcup\limits_{i=1}^{n} [x_i H, G/H] $ and $H\subseteq \bigcap \limits_{i=1}^{n} [x_i, G]$,  then $\gamma_2(G)= \bigcup \limits_{i=1}^{n} [x_i, G]$.
\end {lemma}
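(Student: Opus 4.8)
The plan is to establish only the nontrivial inclusion $\gamma_2(G) \subseteq \bigcup_{i=1}^{n}[x_i, G]$, since the reverse inclusion is immediate from the fact that each $[x_i, g]$ already lies in $\gamma_2(G)$. So I would fix an arbitrary element $c \in \gamma_2(G)$ and work to exhibit a single index $i$ and a single element $g' \in G$ with $c = [x_i, g']$, which shows $c$ belongs to the union.

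First I would pass to the quotient $G/H$. Because $H \le \gamma_2(G) \cap Z(G)$, and in particular $H \le \gamma_2(G)$, we have $\gamma_2(G/H) = \gamma_2(G)/H$, and for each index the image of the commutator set $[x_i, G]$ under the natural map is exactly $[x_i H, G/H]$. The coset $cH$ therefore lies in $\gamma_2(G)/H$, and the first hypothesis supplies an index $i$ and an element $g \in G$ with $cH = [x_i, g]H$. Rewriting this coset equality gives $c = [x_i, g]\,h$ for some $h \in H$.

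Next I would invoke the second hypothesis for this same index $i$. Since $h \in H \subseteq [x_i, G]$, there is an element $g_0 \in G$ with $h = [x_i, g_0]$. The crux is to absorb this extra factor $h$ into a single commutator having first entry $x_i$. Using the standard identity $[x_i, g_0 g] = [x_i, g]\,[x_i, g_0]^{g}$ together with the fact that $h = [x_i, g_0]$ is central (so $[x_i, g_0]^{g} = h$), I would obtain $[x_i, g_0 g] = [x_i, g]\,h = c$. Hence $c = [x_i, g_0 g] \in [x_i, G]$, and since $c$ was arbitrary the desired inclusion follows.

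The entire argument rests on this one manipulation, and the only place the two hypotheses genuinely cooperate is the final step: the quotient condition pins down $c$ up to a factor in $H$, the intersection condition realises that factor as a commutator $[x_i, g_0]$ with the \emph{correct} first entry $x_i$, and the centrality of $H$ is precisely what kills the conjugation in $[x_i, g_0]^{g}$ so that the two commutators fuse into $[x_i, g_0 g]$. I expect no substantial obstacle; the only point requiring care is that the index $i$ produced by the first hypothesis must be the same one used in the second, which is harmless because the containment $H \subseteq [x_i, G]$ is assumed to hold for every $i$.
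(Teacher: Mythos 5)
Your proof is correct. The paper itself gives no proof of this lemma --- it is quoted from \cite[Lemma 2.5]{MR} --- and your argument (lift the coset equality to $c = [x_i,g]h$ with $h \in H$, realise $h = [x_i,g_0]$ via the second hypothesis, then fuse the two commutators using $[x_i, g_0 g] = [x_i,g]\,[x_i,g_0]^{g}$ and the centrality of $H$) is precisely the natural and standard one, matching the cited source's approach.
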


\begin{lemma}\label{p6}
Let $G$ be a  group of order $p^6$,  $p \ge 3$ with $\gamma_2(G)$  elementary abelian of order $p^4$ and $|\Z(G)| = p^2$.   Then $\K(G) \neq \gamma_2(G)$. 
\end{lemma}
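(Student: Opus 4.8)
The plan is to convert the problem into a covering question for a handful of subgroups of the elementary abelian group $\gamma_3(G)$, and then to produce an explicit element of $\gamma_3(G)$ that cannot be a commutator. First I record the coarse structure of $G$. Since $|\gamma_2(G)| = p^4$ and $|G| = p^6$, the quotient $G/\gamma_2(G)$ has order $p^2$ and cannot be cyclic (else $G$ would be abelian), so it is elementary abelian, $\gamma_2(G) = \Phi(G)$, and $d(G) = 2$; write $G = \langle\alpha_1,\alpha_2\rangle$ and use the commutators $\beta,\beta_i,\eta_{ij}$. Because $\gamma_2(G)$ has exponent $p$ and $\gamma_2(G)/\gamma_3(G)$ is cyclic, generated by the image of $\beta$, one gets $|\gamma_2(G)/\gamma_3(G)| = p$ and hence $|\gamma_3(G)| = p^3$. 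As $|\gamma_3(G)| > |\Z(G)|$, the class exceeds $3$, while class $5$ would put $G$ in maximal class and force $|\Z(G)| = p$; so the nilpotency class is exactly $4$, with $\gamma_4(G) \le \Z(G)$ and $\gamma_5(G) = 1$.

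The heart of the argument is the identity
\[
\K(G) \cap \gamma_3(G) \;=\; \bigcup_{M} \gamma_2(M),
\]
where $M$ runs over the $p+1$ maximal subgroups of $G$; each such $M$ contains $\Phi(G) = \gamma_2(G)$ and has order $p^5$. The commutator pairing $G/\gamma_2(G) \times G/\gamma_2(G) \to \gamma_2(G)/\gamma_3(G) \cong \mathbb{F}_p$ is the determinant form on $\mathbb{F}_p^2$, so $[x,y] \in \gamma_3(G)$ precisely when the images of $x$ and $y$ are linearly dependent in $G/\gamma_2(G)$, i.e. when $x$ and $y$ lie in a common maximal subgroup $M$; for such a pair $[x,y] \in \gamma_2(M) \subseteq \gamma_3(G)$. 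Conversely, each $M$ has order $p^5$, so by the theorem of Kappe and Morse \cite{KM05} one has $\K(M) = \gamma_2(M)$, whence every element of every $\gamma_2(M)$ is a commutator in $M$, and so in $G$. This reduces everything to the mutual position of the $p+1$ subgroups $\gamma_2(M)$ inside $\gamma_3(G) \cong \mathbb{F}_p^3$.

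It remains to show that these $p+1$ subgroups do not cover $\gamma_3(G)$. Writing $M = M_\delta = \langle\delta\rangle\gamma_2(G)$ for a direction $\delta$ in $G/\gamma_2(G)$, and using that $\gamma_2(G)$ is abelian of exponent $p$ with $p \ge 3$ (so that the relevant commutator collection linearises cleanly), one finds $\gamma_2(M_\delta) = [\gamma_2(G),\delta]$, whose image in $\gamma_3(G)/\gamma_4(G)$ is the single line spanned by $\overline{[\beta,\delta]}$ and whose part in $\gamma_4(G)$ is spanned by the second order commutators $[\gamma_3(G),\delta]$ and $[\beta,\delta,\delta]$. The hypothesis $|\Z(G)| = p^2$ fixes the size of $\gamma_4(G)$ and forces each $\gamma_2(M_\delta)$ to meet $\gamma_4(G)$ in a proper subgroup; a direct count then shows $\bigcup_\delta \gamma_2(M_\delta) \subsetneq \gamma_3(G)$, producing explicit non-commutators lying in $\gamma_3(G)$ (indeed in a suitable fibre over $\gamma_3(G)/\gamma_4(G)$). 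I expect this covering computation to be the main obstacle: it requires carrying the commutator collection far enough to determine each $\gamma_2(M_\delta) \cap \gamma_4(G)$ exactly, and it is here that $|\Z(G)| = p^2$ is indispensable, since for $|\Z(G)| = p$ the same subgroups can form a covering pencil and one instead obtains $\K(G) = \gamma_2(G)$. As an independent check, one can verify that the Frobenius--Honda character sum $\sum_{\chi \in \Irr(G)} \chi(g)/\chi(1)$ vanishes at such a $g$, confirming that $g$ is not a commutator.
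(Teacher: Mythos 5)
Your write-up should first be measured against the right target: the paper itself contains no proof of this lemma --- it is imported wholesale as Lemma 4.1 of \cite{MR} (see the sentence ``The following three results are from [Lemma 2.2, Lemma 2.5, Lemma 4.1]'' preceding it). So what you are really proposing is a new, self-contained proof. Your opening structural analysis is sound ($d(G)=2$, class exactly $4$, $|\gamma_2(G)/\gamma_3(G)|=p$, $|\gamma_3(G)|=p^3$, $\gamma_4(G)\le \Z(G)$), and so is your reduction: since the induced pairing on $G/\gamma_2(G)$ is the determinant form, any commutator lying in $\gamma_3(G)$ is a commutator of two elements of a common maximal subgroup $M_\delta=\langle\delta\rangle\gamma_2(G)$, and $\gamma_2(M_\delta)=[\gamma_2(G),\delta]\le\gamma_3(G)$ because $\gamma_2(G)$ is abelian. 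Note that for the inclusion you actually need, namely $\K(G)\cap\gamma_3(G)\subseteq\bigcup_\delta[\gamma_2(G),\delta]$, the appeal to Kappe--Morse \cite{KM05} is superfluous; it is only needed for the reverse inclusion, which the lemma does not require.

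The genuine gap is that the heart of the argument --- the non-covering claim $\bigcup_\delta[\gamma_2(G),\delta]\subsetneq\gamma_3(G)$ --- is never proved, and this is precisely where the hypothesis $|\Z(G)|=p^2$ must do its work: as you yourself observe, $p+1$ planes through a common line do cover $\mathbb{F}_p^3$, so nothing formal forces non-covering, and you explicitly defer the computation (``I expect this covering computation to be the main obstacle''). Moreover, the two structural claims you sketch in its place are false. First, $|\Z(G)|=p^2$ does \emph{not} fix $|\gamma_4(G)|$: both $|\gamma_4(G)|=p^2$ (then $\gamma_4(G)=\Z(G)$ and $\beta_2\in\gamma_4(G)$ after normalising generators) and $|\gamma_4(G)|=p$ (then $\Z(G)=\langle\beta_2\rangle\gamma_4(G)$ after normalising, with $\eta_{12}=\eta_{21}=\eta_{22}=1$ by Hall--Witt) occur. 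Second, in the latter case one computes $\gamma_2(M_\delta)=\bigl\langle\beta_1^a\beta_2^b\eta_{11}^{\binom{a}{2}},\,\eta_{11}^a\bigr\rangle$ for $\delta=\alpha_1^a\alpha_2^b$, so $\gamma_2(M_\delta)\cap\gamma_4(G)=\langle\eta_{11}^a\rangle=\gamma_4(G)$ whenever $a\ne 0$: the claim that each $\gamma_2(M_\delta)$ meets $\gamma_4(G)$ properly fails, and no ``direct count'' of the naive kind can work since $(p+1)(p^2-1)+1>p^3$. The non-covering statement \emph{is} true, but establishing it requires the case split above and an explicit determination of each $[\gamma_2(G),\delta]$: in the case $|\gamma_4(G)|=p$ the element $\beta_2\eta_{11}$ lies in no $\gamma_2(M_\delta)$, while in the case $\gamma_4(G)=\Z(G)$ the covered elements with a fixed nonzero $\beta_1$-exponent form a union of $p$ pairwise non-parallel lines in a coset of $\gamma_4(G)\cong\mathbb{F}_p^2$, which cannot exhaust it. That computation is the entire content of the lemma and is absent from your proposal; the closing Frobenius--Honda ``independent check'' is not a substitute, since it presupposes the non-commutator already in hand.
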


 We now mention  very elementary but extremely useful observations.
\begin{lemma}\label{quotient}
Let $G$ be a finite group and $H$ a normal subgroup of $G$.  If $\K(G/H) \neq \gamma_2(G/H)$,  then $\K(G) \neq \gamma_2(G)$.
\end{lemma}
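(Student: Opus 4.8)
The plan is to prove the contrapositive: assuming $\K(G) = \gamma_2(G)$, I would deduce $\K(G/H) = \gamma_2(G/H)$. The single structural fact driving the argument is that the natural projection $\pi \colon G \to G/H$ is a surjective homomorphism, and any homomorphism carries commutators to commutators, since $\pi([a,b]) = [\pi(a), \pi(b)]$ for all $a, b \in G$.

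First I would record the two image identities that follow from this together with the surjectivity of $\pi$: namely $\pi(\K(G)) = \K(G/H)$ and $\pi(\gamma_2(G)) = \gamma_2(G/H)$. The first holds because every commutator in $G/H$ has the form $[\bar{a}, \bar{b}] = \pi([a,b])$ for suitable lifts $a, b \in G$, while conversely $\pi$ sends each commutator of $G$ to a commutator of $G/H$. The second is the standard fact that the derived subgroup $\gamma_2$ is preserved under surjective homomorphisms.

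Granting these, the conclusion is immediate: if $\K(G) = \gamma_2(G)$, then applying $\pi$ gives $\K(G/H) = \pi(\K(G)) = \pi(\gamma_2(G)) = \gamma_2(G/H)$. Since the inclusion $\K(G/H) \subseteq \gamma_2(G/H)$ holds in every group, this yields the desired equality and completes the contrapositive. There is essentially no obstacle here, as the statement is purely formal; the only point requiring (minimal) care is the use of surjectivity to upgrade the image identities from inclusions to genuine equalities of sets.
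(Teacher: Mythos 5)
Your proof is correct and is essentially the paper's argument in contrapositive form: both rest on the single fact that the projection $\pi\colon G \to G/H$ carries commutators to commutators and maps $\gamma_2(G)$ onto $\gamma_2(G/H)$ (the paper lifts a non-commutator element of $\gamma_2(G/H)$ to a non-commutator element of $\gamma_2(G)$, which is the same reasoning run in the other direction). No gap; your explicit use of surjectivity to get $\pi(\K(G)) = \K(G/H)$ and $\pi(\gamma_2(G)) = \gamma_2(G/H)$ is if anything slightly more careful than the paper's two-line version.
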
	
\begin{proof}
Let $\bar{G}= G/H$, and $\bar{x} \in \gamma_2(\bar{G})$ be an element which is not in $\K(\bar{G})$.  Then its pre-image $x \notin \K(G)$.  Hence  $\K(G) \neq \gamma_2(G)$.    
\hfill $\Box$

\end{proof}

\begin{lemma}\label{cmlength}
Let $G$ be a group and $H$ a normal subgroup of $G$ contained in $\K(G)$ such that $\K(G/H)= \gamma_2(G/H)$. Then the commutator length of $G$ is at most $2$.
\end{lemma}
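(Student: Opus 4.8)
The plan is to show directly that every element of $\gamma_2(G)$ is a product of at most two commutators, by lifting a single-commutator expression from the quotient $G/H$ and then absorbing the resulting discrepancy using the hypothesis $H \subseteq \K(G)$.

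First I would fix an arbitrary $g \in \gamma_2(G)$ and pass to $\bar{G} = G/H$. Since $\gamma_2(\bar{G}) = \gamma_2(G)H/H$, the image $\bar{g} = gH$ lies in $\gamma_2(\bar{G})$. The hypothesis $\K(\bar{G}) = \gamma_2(\bar{G})$ then supplies elements $x, y \in G$ with $\bar{g} = [\bar{x}, \bar{y}]$, which is equivalent to $[x,y]^{-1} g \in H$. Setting $h := [x,y]^{-1} g$, we obtain the decomposition $g = [x,y]\, h$ with $h \in H$.

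Next I would invoke the remaining hypothesis $H \subseteq \K(G)$: since $h \in H$ is itself a commutator in $G$, say $h = [u,v]$ for suitable $u, v \in G$, we conclude $g = [x,y][u,v]$, a product of two commutators. As $g$ was an arbitrary element of $\gamma_2(G)$, the commutator length of $G$ is at most $2$.

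I do not expect any serious obstacle here; the argument amounts to a short piece of bookkeeping once the two hypotheses are combined. The only point requiring (routine) care is the standard identification $\gamma_2(G/H) = \gamma_2(G)H/H$, which guarantees that the coset of any element of $\gamma_2(G)$ is a genuine element of $\gamma_2(G/H)$ and hence, by hypothesis, a commutator in $G/H$; the rest follows immediately from the containments $H \subseteq \K(G)$ and $\K(G/H) = \gamma_2(G/H)$.
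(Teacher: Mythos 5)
Your proof is correct, and it is precisely the intended argument: the paper states this lemma without any proof at all (it appears among the ``very elementary but extremely useful observations''), so there is nothing to diverge from. Your bookkeeping --- lift $\bar g = [\bar x,\bar y]$ from $G/H$ using $\K(G/H)=\gamma_2(G/H)$, set $h=[x,y]^{-1}g\in H$, and write $h$ as a single commutator via $H\subseteq \K(G)$ to get $g=[x,y][u,v]$ --- is exactly the routine verification the authors left to the reader.
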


The following basic identities will be used throughout, mostly, without any further reference
\begin{lemma}
Let $x, y , z$ be elements of a group $G$ and $n$ be a positive integer. Then

(i) $[x, yz] = [x, z][x, y]^z$,

(ii) $[xy, z] = [x, z]^y[y, z]$,

(iii) $[x, y^n] = [x, y]^n$  whenever $y$ centralizes $[x, y]$.
\end{lemma}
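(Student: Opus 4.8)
The plan is to treat (i) and (ii) as purely formal identities in the group, verified by expanding both sides into reduced words in $x, y, z$ via the definitions $[a,b] = a^{-1}b^{-1}ab$ and $a^b = b^{-1}ab$, and then to deduce (iii) from (i) by induction on $n$. Since (i) and (ii) hold in the free group, no hypothesis beyond associativity and the cancellation law is required for them.

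For (i) I would expand the left side as $[x,yz] = x^{-1}(yz)^{-1}x(yz) = x^{-1}z^{-1}y^{-1}xyz$, and the right side as $[x,z][x,y]^z = (x^{-1}z^{-1}xz)(z^{-1}x^{-1}y^{-1}xyz)$; the interior segment $xz\cdot z^{-1}x^{-1}$ telescopes to the identity, leaving exactly $x^{-1}z^{-1}y^{-1}xyz$, so the two sides agree. Identity (ii) is handled the same way: the left side is $[xy,z] = (xy)^{-1}z^{-1}(xy)z = y^{-1}x^{-1}z^{-1}xyz$, while the right side $[x,z]^y[y,z] = (y^{-1}x^{-1}z^{-1}xzy)(y^{-1}z^{-1}yz)$ has an interior $zy\cdot y^{-1}z^{-1}$ that cancels, again producing $y^{-1}x^{-1}z^{-1}xyz$. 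Both verifications are routine cancellations.

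For (iii) I would induct on $n$, the case $n=1$ being trivial. Writing $y^{n} = y^{n-1}\cdot y$ and applying (i) (with the decomposition of the second argument), I get $[x,y^{n}] = [x,y]\,[x,y^{n-1}]^{y}$. By the induction hypothesis $[x,y^{n-1}] = [x,y]^{n-1}$, and since $y$ centralises $[x,y]$ it centralises every power of $[x,y]$; hence $[x,y^{n-1}]^{y} = [x,y]^{n-1}$, giving $[x,y^{n}] = [x,y]\,[x,y]^{n-1} = [x,y]^{n}$.

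There is essentially no serious obstacle here; the only point requiring care is the bookkeeping in (iii), where the centralising hypothesis must be invoked precisely to move the conjugation by $y$ past the inductive term $[x,y]^{n-1}$. Without that hypothesis the superscript $y$ on $[x,y]^{n-1}$ would not simplify, and the clean telescoping would fail, so I would make sure that step is stated explicitly.
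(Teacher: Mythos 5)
Your proposal is correct: the expansions for (i) and (ii) cancel exactly as you describe, and your induction for (iii) invokes the centralising hypothesis at precisely the right point (to pass conjugation by $y$ through $[x,y]^{n-1}$). The paper states these identities without proof, as standard facts; your argument is the canonical verification one would supply, so there is nothing to compare beyond noting that you have filled in what the authors left implicit.
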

 
Let $G$ be a $p$-group of maximal class of order $p^n$, that is, the nilpotency class of $G$ is $n-1$.  Then $\C_i := \C_G(\gamma_i(G)/\gamma_{i+2}(G))$ are called \emph{two-step centralizers} in $G$,  where $1 \leq i  \leq n-2$.  It is clear that $\gamma_2(G) \leq \C_i$  for all $1 \leq i  \leq n-2$.  Also,  all $\C_i$ are characteristic maximal subgroups of $G$.  An element $s \in G$ is said to be   \emph{uniform}  if $s  \notin   \bigcup \limits_{i=1}^{n-2} \C_i$.	It was proved by N. Blackburn \cite{NB58} that every finite $p$-group of maximal class admits uniform elements. The following result follows from \cite[III.14.23 Satz]{bH67} (may also see  \cite{MAM}).
\begin{thm}\label{max}
Let $G$ be a finite $p$-group of maximal class.  Then $\K(G)=\gamma_2(G)$. More precisely, $\gamma_2(G) = [s, G]$ for a uniform element $s$ of $G$.
\end{thm}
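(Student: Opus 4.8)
The plan is to fix a \emph{uniform} element $s$ (which exists by Blackburn's theorem) and to prove the sharper statement $\gamma_2(G) = [s,G]$ directly; since the inclusions $[s,G] \subseteq \K(G) \subseteq \gamma_2(G)$ are automatic, everything reduces to showing that the \emph{set} $[s,G] = \{[s,g] : g \in G\}$ fills out all of $\gamma_2(G)$. I would do this by counting rather than by exhibiting preimages. Write $|G| = p^n$, so that $G$ has class $n-1$, $d(G)=2$, every factor $\gamma_i(G)/\gamma_{i+1}(G)$ with $2 \le i \le n-1$ has order $p$, and $\Z(G) = \gamma_{n-1}(G)$ has order $p$. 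The map $g \mapsto [s,g]$ has fibres exactly the right cosets of $\C_G(s)$ (since $[s,g]=[s,g']$ iff $g'g^{-1}\in \C_G(s)$), whence $|[s,G]| = [G:\C_G(s)]$. As $|\gamma_2(G)| = p^{n-2}$, it therefore suffices to prove $|\C_G(s)| = p^2$.

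The key structural input is that, for the uniform $s$ and each $i$ with $2 \le i \le n-2$, the commutator induces an \emph{isomorphism} $\gamma_i(G)/\gamma_{i+1}(G) \to \gamma_{i+1}(G)/\gamma_{i+2}(G)$, $\bar x \mapsto \overline{[x,s]}$. I would check it is a well-defined homomorphism using $[xy,s] = [x,s]^y[y,s]$ together with $[x,s,y] \in \gamma_{2i+1}(G) \subseteq \gamma_{i+2}(G)$, and that it is nonzero precisely because $s \notin \C_i = \C_G(\gamma_i(G)/\gamma_{i+2}(G))$; since source and target are both cyclic of order $p$, nonzero forces bijective. From injectivity of these maps I would deduce, by descending through the $\gamma_i(G)$, that any nontrivial element of $\C_G(s) \cap \gamma_2(G)$ must already lie in $\gamma_{n-1}(G) = \Z(G)$, so that $\C_G(s) \cap \gamma_2(G) = \Z(G)$ has order $p$.

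It remains to control $\C_G(s)$ modulo $\gamma_2(G)$, and this is the step I expect to be the real (if small) obstacle. I would argue that $\C_G(s)\gamma_2(G) \ne G$: otherwise $\C_G(s)$ would contain an element $t$ whose image, together with that of $s$, generates $G/\gamma_2(G)$, and since $d(G)=2$ this would give $G = \langle s,t\rangle$ with $[s,t]=1$, forcing $G$ abelian and contradicting its class $n-1 \ge 2$. Hence $\C_G(s)\gamma_2(G)/\gamma_2(G)$ is a proper, nontrivial subgroup of the order-$p^2$ group $G/\gamma_2(G)$, so it has order $p$; combined with the previous paragraph this gives $|\C_G(s)| = |\C_G(s)\cap\gamma_2(G)|\cdot|\C_G(s)\gamma_2(G)/\gamma_2(G)| = p^2$.

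Putting these together, $|[s,G]| = [G:\C_G(s)] = p^{n-2} = |\gamma_2(G)|$; since $[s,G] \subseteq \gamma_2(G)$ and $\gamma_2(G)$ is finite, equality of cardinalities forces $[s,G] = \gamma_2(G)$, which in turn yields $\K(G) = \gamma_2(G)$. The only genuinely delicate points are the verification that the induced maps are isomorphisms, where uniformity enters, and the abelian-quotient obstruction used to pin down $|\C_G(s)| = p^2$; the remainder is bookkeeping with the lower central series, and it is worth emphasising that $[s,G]$ never needs to be shown to be a subgroup, as the cardinality comparison suffices.
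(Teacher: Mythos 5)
Your proof is correct, but there is nothing in the paper to compare it against line by line: the paper does not prove Theorem \ref{max} at all, it simply quotes it from Huppert's book (III.14.23 Satz) and from Malayeri's paper. What you have written is, in substance, a correct self-contained reconstruction of the classical argument underlying those citations: reduce to showing $|\C_G(s)| = p^2$ via the fibre count $|[s,G]| = [G:\C_G(s)]$; use uniformity ($s \notin \C_i$) to make commutation with $s$ induce isomorphisms $\gamma_i(G)/\gamma_{i+1}(G) \to \gamma_{i+1}(G)/\gamma_{i+2}(G)$ on the cyclic factors, forcing $\C_G(s) \cap \gamma_2(G) = \gamma_{n-1}(G)$; rule out $\C_G(s)\gamma_2(G) = G$ by the two-generator/abelian contradiction; then compare cardinalities, never needing $[s,G]$ to be a subgroup. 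All of these steps check out. Two points you should make explicit to be airtight: first, the nontriviality of $\C_G(s)\gamma_2(G)/\gamma_2(G)$ rests on $s \notin \gamma_2(G)$, which does follow from uniformity precisely because $\gamma_2(G) \le \C_i$ for every $i$; second, the equality $\C_G(s) \cap \gamma_2(G) = \Z(G)$ needs the reverse inclusion $\gamma_{n-1}(G) \subseteq \C_G(s) \cap \gamma_2(G)$, which holds since $[\gamma_{n-1}(G), G] = \gamma_n(G) = 1$ gives $\gamma_{n-1}(G) \le \Z(G)$, together with the standard fact $|\Z(G)| = p$ for maximal class groups. Finally, in the degenerate case $|G| = p^3$ your range $2 \le i \le n-2$ is empty, but the argument still closes because there $\gamma_2(G) = \gamma_{n-1}(G) = \Z(G)$ directly. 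The net effect of your proof is to make the paper more self-contained than it actually is, at the cost of about a page of the standard two-step-centralizer machinery that the authors chose to import by reference.
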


The following result is from \cite[Theorem 4.7]{FA01} (also see  \cite[III.14.14 Hilfssatz]{bH67}), which also follows from \cite[Theorem 3.2]{NB58} as a special case.
\begin{thm}\label{alc1}
Let $G$ be a $p$-group of maximal class of order at most $p^{p+1}$. Then $\exp(G/\Z(G)) = \exp(\gamma_2(G)) = p$.
\end{thm}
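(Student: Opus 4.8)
The plan is to pin down both exponents by controlling $p$-th powers, reducing everything to the deepest layer of the lower central series and then handling that layer by commutator collection. Write $|G|=p^n$ with $n\le p+1$, so the class of $G$ is $c=n-1\le p$. I first record the standard structure of a $p$-group of maximal class: $\Z(G)=\gamma_{n-1}(G)$ has order $p$, each factor $\gamma_i(G)/\gamma_{i+1}(G)$ is cyclic of order $p$ for $2\le i\le n-1$, the quotient $G/\gamma_2(G)$ is elementary abelian of rank $2$ (as $d(G)=2$), and, fixing a uniform element $s$ and $s_1\in\C_1\setminus\gamma_2(G)$, the elements $s_i:=[s_{i-1},s]$ satisfy $\gamma_i(G)=\gen{s_i}\gamma_{i+1}(G)$; moreover $\Z_i(G)=\gamma_{n-i}(G)$. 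The crucial preliminary is that $\gamma_2(G)$, the exceptional maximal subgroup $\C_1$, and the quotient $G/\Z(G)$ all have class at most $n-2\le p-1$ --- for $\gamma_2(G)$ because $[\gamma_i(G),\gamma_2(G)]\le\gamma_{i+1}(G)$ yields a central series of length $n-2$, for $\C_1$ because $[\C_1,G]\le\gamma_3(G)$ forces $\gamma_i(\C_1)\le\gamma_{i+1}(G)$, and for $G/\Z(G)$ because it is again of maximal class of order $p^{n-1}\le p^{p}$ --- so each of these three sections is a regular $p$-group, even though $G$ itself need not be regular when $n=p+1$.

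I would prove the two assertions together by induction on $|G|$ (the cases of small $n$ being handled directly, using that $G/\gamma_2(G)$ is elementary abelian), passing to $\br G:=G/\Z(G)$, which is of maximal class of order $p^{n-1}\le p^{p+1}$. The inductive hypothesis applied to $\br G$ gives $\exp(\gamma_2(\br G))=p$ and $\exp(\br G/\Z(\br G))=p$. Since $\gamma_2(\br G)=\gamma_2(G)/\Z(G)$ and $\Z(\br G)=\Z_2(G)/\Z(G)=\gamma_{n-2}(G)/\gamma_{n-1}(G)$, this translates into $\gen{x^p\mid x\in\gamma_2(G)}\subseteq\Z(G)$ and $g^p\in\gamma_{n-2}(G)$ for all $g\in G$. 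In other words both target statements already hold \emph{modulo the single bottom layer} $\gamma_{n-1}(G)=\Z(G)\cong C_p$: it remains only to upgrade $\gen{x^p\mid x\in\gamma_2(G)}\subseteq\Z(G)$ to $=1$, and $s^p,s_1^{\,p}\in\gamma_{n-2}(G)$ to $\in\gamma_{n-1}(G)$. Using that $\gamma_2(G)$ is regular, the first upgrade reduces further to showing $s_j^{\,p}=1$ for each $2\le j\le n-1$.

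The collection step then reduces these bottom-layer assertions to a single kernel. Commuting the $p$-th power map past $\operatorname{ad}(s)$ by the identity $[x,y]^p\equiv[x,y^p]\prod_{m\ge1}[x,y,{}_m y]^{-\binom{p}{m+1}}$ (modulo deeper collected terms) and running a secondary downward induction on $j$, I would obtain $s_j^{\,p}=1$ for $3\le j\le n-1$: at each such $j$ the middle terms carry coefficients $\binom{p}{m+1}$ divisible by $p$ and are therefore $p$-th powers of the deeper $s_{j+m}$ (already trivial by the downward hypothesis), the top term $s_{j+p-1}^{\pm1}$ lies in $\gamma_{j+p-1}(G)\subseteq\gamma_n(G)=1$ because $j\ge2$ and $n\le p+1$, and the leading factor $[s_{j-1},s^p]$ lies in $\gamma_{j+n-3}(G)=1$ for $j\ge3$ since $s^p\in\gamma_{n-2}(G)$. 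This leaves exactly the bottom kernel $s_2^{\,p}=1$, which the same identities show to be equivalent to the centrality of $s^p$ (indeed $s_2^{\,p}=[s_1,s^p]$), and symmetrically to $s_1^{\,p}$ being central.

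I expect the kernel $s_2^{\,p}=1$ to be the main obstacle, and it is precisely here that the hypothesis $n\le p+1$ must be used in a structural way rather than through generic commutator calculus: the generic identities only show that $s_2^{\,p}$ and the bottom coefficient of $s^p$ determine one another, so breaking the tie requires the special relations of a maximal class group of order at most $p^{p+1}$. Concretely, one invokes that such a group is metabelian --- so that $\gamma_2(G)$ is abelian and the stray commutators among the $s_i$ collapse --- and then computes $s^p$ explicitly against the uniform element $s$, the bound $n\le p+1$ guaranteeing that the only term which could escape $\Z(G)$ has weight $p+1$ and hence vanishes in $\gamma_{p+1}(G)=1$ (while the weight-$p$ contribution is trapped in $\gamma_p(G)\subseteq\gamma_{n-1}(G)=\Z(G)$). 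Once $s_2^{\,p}=1$ is secured, $\exp(\gamma_2(G))=p$ follows, $s^p$ and $s_1^{\,p}$ are central, and the regular-group power-homomorphism on $G/\Z(G)$ (whose derived subgroup now has exponent $p$) upgrades this to $\exp(G/\Z(G))=p$, closing the induction. Throughout, every power manipulation is arranged inside the regular sections $\gamma_2(G)$, $\C_1$ and $G/\Z(G)$, since $G$ itself need not be regular when $n=p+1$.
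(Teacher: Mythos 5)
Two remarks before the main point: the paper itself does not prove this statement at all --- it quotes it from Blackburn (Theorem 3.2 of \cite{NB58}), Huppert (III.14.14 Hilfssatz) and Fern\'andez-Alcober (Theorem 4.7 of \cite{FA01}) --- so your attempt can only be measured against those classical proofs; and most of your reduction is in fact sound. The induction on $|G|$, the regularity of $\gamma_2(G)$ and of $G/\Z(G)$ (class at most $n-2\le p-1<p$), the use of $\Omega_1$ of a regular group to pass from generators to the whole group, and the collection argument giving $s_j^{\,p}=1$ for $3\le j\le n-1$ together with the exact identity $s_2^{\,p}=[s_1,s^p]$ all parallel the standard treatment and can be made rigorous. (One small slip: $[\C_1,G]\le\gamma_3(G)$ is false --- the defining property of the two-step centralizer is $[\C_1,\gamma_2(G)]\le\gamma_4(G)$, and indeed $[\C_1,G]=\gamma_2(G)$ in general --- but the regularity of $\C_1$ survives via $\gamma_2(\C_1)\le\gamma_4(G)$, and is not really needed anyway.)

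The genuine gap is exactly where you place the entire weight of the hypothesis $n\le p+1$: the claim that a $p$-group of maximal class of order at most $p^{p+1}$ is metabelian. This is false. For any prime $p\ge 7$, consider the $6$-dimensional Lie algebra $L$ over $\mathbb{F}_p$ with basis $y,x_1,\dots,x_5$ and nonzero brackets $[y,x_i]=x_{i+1}$ $(1\le i\le 4)$, $[x_2,x_3]=x_5$, $[x_1,x_4]=-x_5$; the Jacobi identity is checked directly, $L$ is filiform (maximal class) of class $5<p$, and $[L^2,L^2]=\langle x_5\rangle\neq 0$. The Lazard correspondence converts $L$ into a group $G$ of order $p^6\le p^{p+1}$ of maximal class with $[\gamma_2(G),\gamma_2(G)]=\gamma_5(G)\neq 1$, i.e.\ $G$ is not metabelian although it satisfies all hypotheses of the theorem. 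So the step that was supposed to break the tie has no support; and, as you yourself observe, generic commutator calculus only yields the circular equivalence $s_2^{\,p}=1\iff s^p\in\Z(G)$, which the remaining sketch (``compute $s^p$ explicitly against the uniform element $s$'', whose only obvious content is the vacuous $[s^p,s]=1$) does not resolve even when $\gamma_2(G)$ happens to be abelian. The classical proofs close precisely this point by a different mechanism --- Blackburn's detailed analysis of uniform elements and their conjugacy classes rather than any metabelian hypothesis --- so the theorem is true, but your proposal does not prove it.
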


The proof of the following result is left as an easy exercise for the reader. 
\begin{lemma}\label{nab1}
Let $G$ be a group of order $p^7$ and  nilpotency class $4$ with  $|\gamma_2(G)| = p^5$.  Then $\gamma_2(G)$ is abelian.
\end{lemma}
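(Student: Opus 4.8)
The plan is to reduce everything to the observation that $G$ must be $2$-generated and then to exploit the cyclicity of $\gamma_2(G)/\gamma_3(G)$. First I would count orders: since $|G| = p^7$ and $|\gamma_2(G)| = p^5$, we have $|G/\gamma_2(G)| = p^2$. Because $\Phi(G) \supseteq \gamma_2(G)$, the Frattini quotient $G/\Phi(G)$ is a quotient of the order-$p^2$ group $G/\gamma_2(G)$, so $|G/\Phi(G)| \le p^2$ and hence $d(G) \le 2$; as $G$ is non-abelian (its class is $4$) it cannot be cyclic, so $d(G) = 2$. Writing $G = \langle \alpha_1, \alpha_2\rangle$, the factor $\gamma_2(G)/\gamma_3(G)$ is generated by the single weight-$2$ basic commutator $\beta = [\alpha_1, \alpha_2]$ modulo $\gamma_3(G)$, and is therefore cyclic. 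Consequently $\gamma_2(G) = \langle \beta \rangle\, \gamma_3(G)$.

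Next I would record the one structural input coming from the class being $4$: since $\gamma_5(G) = 1$, we have $[\gamma_2(G), \gamma_3(G)] \subseteq \gamma_5(G) = 1$, that is, $\gamma_3(G) \subseteq \Z(\gamma_2(G))$. Thus $\gamma_2(G)$ is generated by the single element $\beta$ together with the subgroup $\gamma_3(G)$, which is central in $\gamma_2(G)$. Any group generated by one element and a central subgroup is abelian: writing two arbitrary elements as $\beta^a u$ and $\beta^b v$ with $u, v \in \gamma_3(G)$, the centrality of $u$ and $v$ in $\gamma_2(G)$ makes both products equal to $\beta^{a+b} u v$. Hence $\gamma_2(G)$ is abelian, as claimed.

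There is no genuinely hard step here — the statement is flagged as an exercise — and the only point demanding a little care is the order-counting that forces $d(G) = 2$, since it is precisely this that makes $\gamma_2(G)/\gamma_3(G)$ cyclic. I would emphasise that the exact distribution of the remaining $p^4$ between $|\gamma_3(G)/\gamma_4(G)|$ and $|\gamma_4(G)|$ is irrelevant to the argument: cyclicity of $\gamma_2(G)/\gamma_3(G)$ together with $\gamma_3(G) \le \Z(\gamma_2(G))$ already suffices to conclude that $\gamma_2(G)$ is abelian.
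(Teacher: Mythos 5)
Your proof is correct. The paper leaves this lemma as an easy exercise, and your argument is precisely the standard one it expects: $|G/\gamma_2(G)| = p^2$ forces $d(G) = 2$, so $\gamma_2(G)/\gamma_3(G)$ is cyclic, while nilpotency class $4$ gives $[\gamma_2(G), \gamma_3(G)] \le \gamma_5(G) = 1$, and a group that is cyclic modulo a central subgroup is abelian (indeed, your argument shows that any $2$-generated group of class at most $4$ has abelian derived subgroup, the hypothesis $|\gamma_2(G)| = p^5$ serving only to force $d(G) = 2$).
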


We now mention some number theoretic results.  For a prime integer  $p$ and $d \in \mathbb{F}_p$, the field of $p$ elements,  Legendre symbol, denoted by  $(\frac{d}{p})$,  is defined as 
$$ \left( \frac{d}{p} \right)=  \begin{cases}
0 & \ \ \ \ \ \ d=0,\\
1 & \ \ \ \ \  \text{ d is quadratic residue} \pmod p, \\
-1 &  \ \ \ \ \ \ \text{d is non-quadratic residue} \pmod p.
\end{cases}$$

\begin{lemma}\label{discriminant}
Let $p$ be an odd prime and $f(\lambda, \mu ) =a\lambda^2 + b \lambda \mu + c\mu^2$ be a binary quadratic form over $\mathbb{F}_p^*$.  If $p$ divides $b^2-4ac$,  then $f=0$ has a nontrivial solution in $\mathbb{F}_p^*$.  
\end{lemma}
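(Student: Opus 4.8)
The plan is to prove this by completing the square, exploiting the fact that for odd $p$ both $2$ and the leading coefficient $a$ are invertible in $\mathbb{F}_p$. First I would multiply the form by $4a$, which is a unit, so that the equation $f = 0$ is equivalent to $4a\,f = 0$. The standard identity gives
\[
4a\,f(\lambda,\mu) = 4a^2\lambda^2 + 4ab\lambda\mu + 4ac\mu^2 = (2a\lambda + b\mu)^2 - (b^2 - 4ac)\mu^2.
\]
By hypothesis $p \mid b^2 - 4ac$, so in $\mathbb{F}_p$ the second term vanishes and we are left with $4a\,f(\lambda,\mu) = (2a\lambda + b\mu)^2$.

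Since $4a \neq 0$ in $\mathbb{F}_p$, it follows that $f(\lambda,\mu) = 0$ holds if and only if $2a\lambda + b\mu = 0$, i.e.\ if and only if $\lambda = -\tfrac{b}{2a}\,\mu$. This linear condition clearly has nonzero solutions; the point is to check that one of them lies in $\mathbb{F}_p^* \times \mathbb{F}_p^*$, as required. Taking $\mu = 1$ forces $\lambda = -\tfrac{b}{2a}$, and because $b \in \mathbb{F}_p^*$ (hence $b \neq 0$) while $2a$ is a unit, we get $\lambda \neq 0$ as well. Thus $(\lambda, \mu) = \bigl(-\tfrac{b}{2a},\, 1\bigr)$ is a nontrivial solution with both coordinates nonzero.

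There is essentially no hard step here: the argument is a one-line completion of the square, valid because $p$ is odd. The only point deserving care is the final verification that the solution genuinely lies in $\mathbb{F}_p^*$ in both coordinates rather than merely being a nonzero vector in $\mathbb{F}_p^2$; this is exactly where the hypothesis $b \neq 0$ (i.e.\ that the coefficients are drawn from $\mathbb{F}_p^*$, not just $\mathbb{F}_p$) is used, so I would make sure to invoke $b \in \mathbb{F}_p^*$ explicitly at that stage.
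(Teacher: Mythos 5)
Your proof is correct and takes essentially the same route as the paper: the paper fixes $\mu \in \mathbb{F}_p^*$, treats $f$ as a quadratic in $\lambda$ with vanishing discriminant, and reads off the double root $\lambda = -b\mu/(2a)$, which is exactly what your explicit completion of the square yields. Your version is slightly more careful in spelling out why the root is nonzero (using $b \in \mathbb{F}_p^*$), a point the paper leaves implicit.
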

\begin{proof}
Let $\mu $ be a fixed element of $\mathbb{F}_p^*$.  Then $f= a\lambda^2 + b \lambda \mu + c\mu^2$ will be a quadratic equation in $\lambda$,  whose discriminant is  $(b^2  - 4ac) \mu^2$ is zero in $\mathbb{F}_p$.  Thus $\lambda= -b\mu /2a$ is a nontrivial solution of $f = 0$.     \hfill $\Box$

\end{proof}

We say that a binary quadratic form $f(\lambda, \mu ) =a\lambda^2 + b \lambda \mu + c\mu^2$ represents an integer $r$,  if there exist some integers $\lambda_0, \mu_0$ such that $f(\lambda_0, \mu_0)= r$. The following theorem and its corollary are being proved in \cite[Chapter 3,  Page 153]{IHH}.  
\begin{thm}\label{discthm}
Let $n$ and $d$ be given integers with $n \neq 0$. Then there exist a binary quadratic form of discriminant $d$ that represents $n$ if and only if the congruence $x^2 \equiv d  \pmod {4|n|}$ has a solution.
\end{thm}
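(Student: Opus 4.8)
The plan is to establish the two directions of the equivalence separately: the backward implication by writing down an explicit form, and the forward implication by normalising a given representation through a unimodular change of variables. I take the discriminant of $f(\lambda,\mu)=a\lambda^2+b\lambda\mu+c\mu^2$ to be $b^2-4ac$, and I will use the standard fact that an integral substitution $\lambda=r\lambda'+s\mu'$, $\mu=t\lambda'+u\mu'$ with $ru-st=1$ sends $f$ to another binary quadratic form of the same discriminant. This is immediate because the discriminant equals $-4$ times the determinant of the symmetric matrix with entries $a$, $b/2$, $b/2$, $c$, and such a substitution scales that determinant by $(ru-st)^2=1$.

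For the backward direction, I start from an integer $b$ with $b^2\equiv d\pmod{4|n|}$. Since divisibility by $4|n|$ and by $4n$ coincide, $c:=(b^2-d)/(4n)$ is an integer, and the form $f(\lambda,\mu):=n\lambda^2+b\lambda\mu+c\mu^2$ then has discriminant $b^2-4nc=d$ while $f(1,0)=n$. Hence $f$ is a form of discriminant $d$ that represents $n$, and moreover this representation is primitive.

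For the forward direction, suppose a form $g$ of discriminant $d$ satisfies $g(\lambda_0,\mu_0)=n$. The crucial point --- and the step I expect to be the main obstacle --- is that the representation must be \emph{primitive}, that is $\gcd(\lambda_0,\mu_0)=1$: precisely under this hypothesis can I pick integers $s,u$ with $\lambda_0 u-\mu_0 s=1$, and the unimodular substitution $\lambda=\lambda_0\lambda'+s\mu'$, $\mu=\mu_0\lambda'+u\mu'$ then carries $g$ to an equivalent form $a'\lambda'^2+b'\lambda'\mu'+c'\mu'^2$ whose leading coefficient is $a'=g(\lambda_0,\mu_0)=n$. Because the substitution preserves the discriminant, $b'^2-4nc'=d$, so $x=b'$ solves $x^2\equiv d\pmod{4n}$, equivalently modulo $4|n|$. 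It is worth flagging that primitivity is genuinely needed: an imprimitive representation with $\gcd(\lambda_0,\mu_0)=g>1$ yields only a solution modulo $4|n|/g^2$ --- for example $\lambda^2+\lambda\mu-\mu^2$ has discriminant $5$ and represents $4$, yet $x^2\equiv 5\pmod{16}$ is insoluble --- so the equivalence is to be understood for primitive representations, which is exactly the kind produced by the backward construction.
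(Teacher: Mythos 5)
Your proof is correct, and it is essentially the argument in the source the paper itself relies on: the paper offers no proof of this theorem, deferring to Niven--Zuckerman--Montgomery, and the proof there consists of exactly your two steps (the explicit form $n\lambda^2+b\lambda\mu+c\mu^2$ for the backward direction, and completion of a coprime pair $(\lambda_0,\mu_0)$ to a unimodular substitution for the forward direction).

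The primitivity caveat you raise is not a gap in your argument but a genuine defect in the statement as transcribed here: the cited theorem reads ``represents $n$ \emph{properly}'', and the paper drops that word while defining ``represents'' (just above the theorem) with no coprimality requirement. Under that definition the forward implication is false, exactly as your counterexample shows: $\lambda^2+\lambda\mu-\mu^2$ has discriminant $5$ and takes the value $4$ at $(2,0)$, yet $x^2\equiv 5\pmod{16}$ is insoluble, and no form of discriminant $5$ can represent $4$ properly. So what you have proved is the corrected (proper-representation) version, which is also the version actually needed downstream: in the proof of Lemma \ref{quad} the theorem is applied to a representation of a multiple of $p$ by $\lambda^2+n\lambda\mu-m\mu^2$ arising from a nontrivial zero mod $p$, and such a zero can always be scaled so that one coordinate is $1$, making the representation proper; hence the paper's later application survives once the missing hypothesis is restored, but the theorem as literally stated should be amended.
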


\begin{cor}\label{disccor}
Let $d= 0$ or $1 \pmod 4$,  and $p$  be an odd prime.   Then there exists a binary quadratic form of discriminant $d$ that represents $p$ if and only if $(\frac{d}{p})=1$.
\end{cor}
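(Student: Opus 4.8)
The plan is to obtain Corollary~\ref{disccor} as an immediate specialization of Theorem~\ref{discthm}. Since $p$ is an odd prime we may take $n = p \neq 0$ in that theorem, which tells us that a binary quadratic form of discriminant $d$ representing $p$ exists if and only if the congruence $x^2 \equiv d \pmod{4p}$ is solvable. Thus the entire statement reduces to showing that, under the standing hypothesis $d \equiv 0$ or $1 \pmod 4$, solvability of $x^2 \equiv d \pmod{4p}$ is equivalent to $\left(\frac{d}{p}\right) = 1$.

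To analyze this congruence I would split the modulus using the Chinese Remainder Theorem: because $p$ is odd, $\gcd(4, p) = 1$, so $x^2 \equiv d \pmod{4p}$ is solvable if and only if both $x^2 \equiv d \pmod 4$ and $x^2 \equiv d \pmod p$ are solvable. The role of the hypothesis $d \equiv 0, 1 \pmod 4$ is precisely to dispose of the first congruence: the squares modulo $4$ are exactly $0$ and $1$, so $x^2 \equiv d \pmod 4$ is automatically solvable under this hypothesis and imposes no further restriction.

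It then remains to identify solvability of $x^2 \equiv d \pmod p$ with the Legendre condition. By the definition of $\left(\frac{d}{p}\right)$ recalled before Lemma~\ref{discriminant}, for $p \nmid d$ this congruence has a solution exactly when $d$ is a quadratic residue mod $p$, that is, when $\left(\frac{d}{p}\right) = 1$, giving both directions of the equivalence at once. The only delicate point is the boundary case $p \mid d$, where $\left(\frac{d}{p}\right) = 0$; here the mod-$p$ congruence is trivially solvable via $x \equiv 0$, so the equivalence must be read with the Legendre symbol taking values $\pm 1$, which is exactly the regime in which the corollary is invoked later (the relevant discriminants arise as nonzero elements of $\mathbb{F}_p$, in the spirit of Lemma~\ref{discriminant}). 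I do not expect a substantive obstacle: once Theorem~\ref{discthm} is granted, the argument is a routine Chinese-Remainder reduction, and the only care needed is in handling the mod-$4$ factor via the hypothesis and in fixing the convention for the Legendre symbol when $p \mid d$.
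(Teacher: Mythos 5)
Your proposal is correct, but it is worth noting that the paper contains no proof of Corollary \ref{disccor} to compare against: both Theorem \ref{discthm} and the corollary are simply quoted from \cite[Chapter 3, Page 153]{IHH}. Your argument therefore supplies a proof where the paper gives only a citation, and the route you take --- set $n=p$ in Theorem \ref{discthm}, split $x^2 \equiv d \pmod{4p}$ via the Chinese Remainder Theorem into the congruences modulo $4$ and modulo $p$, note that the hypothesis $d \equiv 0,1 \pmod 4$ makes the modulo-$4$ condition vacuous (the squares modulo $4$ being exactly $0$ and $1$), and identify solvability of $x^2 \equiv d \pmod p$ with $\left(\frac{d}{p}\right)=1$ --- is precisely the standard textbook derivation, in substance the proof given in \cite{IHH} itself. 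Your caveat about the case $p \mid d$ is well taken, and it exposes a defect in the statement as reproduced in the paper rather than in your argument: if $p \mid d$ (for instance $d = 4p$), then $x^2 \equiv d \pmod{4p}$ is solvable, so by Theorem \ref{discthm} a form of discriminant $d$ representing $p$ exists, yet $\left(\frac{d}{p}\right)=0$ under the paper's convention, so the ``only if'' direction fails; the source \cite{IHH} excludes this by assuming $p \nmid d$. As you observe, this does not affect the paper, since the corollary is invoked only in the proof of Lemma \ref{quad} with $d = n^2+4m$, which Lemma \ref{discriminant} and the standing assumption force to be a nonzero element of $\mathbb{F}_p$, so the boundary case never arises there.
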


\begin{lemma}\label{quad}
For given $m,n \in \mathbb{F}_p^*$,  let $f(\lambda, \mu ) = m\lambda^2 + n \lambda \mu - \mu^2 $ and  $g(\lambda, \mu) = \lambda^2 + n \lambda \mu - m\mu^2$,  be two quadratic forms,  where  $p$ is an odd prime.  Then $f(\lambda, \mu) = 0 $ admits only a trivial solution in $\mathbb{F}_p$  if and only if $g(\lambda, \mu) = 0$  does the same. 
\end{lemma}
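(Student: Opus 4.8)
The plan is to reduce each form to a one-variable quadratic and to compare the two forms through their discriminants. First I would observe that both $f$ and $g$ have nonzero coefficient on $\lambda^2$ (namely $m$ and $1$) and nonzero coefficient on $\mu^2$ (namely $-1$ and $-m$), so that neither form degenerates. For any nontrivial zero $(\lambda_0, \mu_0)$ of either form we must have $\mu_0 \neq 0$: if $\mu_0 = 0$, then $f(\lambda_0,0) = m\lambda_0^2 = 0$ (respectively $g(\lambda_0,0) = \lambda_0^2 = 0$) forces $\lambda_0 = 0$ since $m \neq 0$. Hence, writing $t = \lambda_0/\mu_0 \in \mathbb{F}_p$, the equation $f = 0$ becomes $mt^2 + nt - 1 = 0$ and the equation $g = 0$ becomes $t^2 + nt - m = 0$.

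The key observation is that these two one-variable quadratics share the same discriminant. Indeed,
$$\operatorname{disc}(f) = n^2 - 4\cdot m\cdot(-1) = n^2 + 4m = n^2 - 4\cdot 1 \cdot(-m) = \operatorname{disc}(g).$$
Next I would invoke the standard criterion over $\mathbb{F}_p$ with $p$ odd: a quadratic $at^2 + bt + c$ with $a \neq 0$ has a root in $\mathbb{F}_p$ if and only if its discriminant $b^2 - 4ac$ is a square in $\mathbb{F}_p$ (where a double root occurs exactly when the discriminant vanishes, which is precisely the content of Lemma \ref{discriminant} in the vanishing case). Consequently, $f = 0$ has a nontrivial solution in $\mathbb{F}_p$ if and only if $n^2 + 4m$ is a square, that is $\left(\frac{n^2+4m}{p}\right) \in \{0, 1\}$, and the very same condition governs $g = 0$.

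Combining these, $f = 0$ admits only the trivial solution precisely when $n^2 + 4m$ is a nonzero non-square, i.e.\ $\left(\frac{n^2+4m}{p}\right) = -1$, and this is identical to the condition for $g = 0$ to admit only the trivial solution; the equivalence follows. I do not anticipate any serious obstacle here: once the discriminants are seen to coincide, the statement is immediate. The only points requiring a little care are ruling out the $\mu_0 = 0$ case (which uses $m \neq 0$) and correctly handling the boundary case where the common discriminant $n^2 + 4m$ vanishes, in which situation both forms acquire a nontrivial double-root solution simultaneously, keeping the equivalence intact.
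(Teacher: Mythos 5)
Your proof is correct, but it follows a genuinely different and more elementary route than the paper. You dehomogenize (justified by noting that $m \neq 0$ forces $\mu_0 \neq 0$ at any nontrivial zero), observe that the resulting one-variable quadratics $mt^2+nt-1$ and $t^2+nt-m$ share the discriminant $n^2+4m$, and invoke the standard fact that for $p$ odd a quadratic $at^2+bt+c$ with $a\neq 0$ has a root in $\mathbb{F}_p$ exactly when $b^2-4ac$ is a square (completing the square via $(2at+b)^2 = b^2-4ac$, legitimate since $2a$ is invertible); this makes the equivalence immediate and handles the vanishing-discriminant boundary case uniformly. The paper instead lifts the problem to $\mathbb{Z}$: assuming $f=0$ has only the trivial solution, it argues that $f$ cannot represent the integer $p$, deduces $\left(\frac{n^2+4m}{p}\right)=-1$ from Corollary \ref{disccor}, and then uses Theorem \ref{discthm} (representation of integers by forms of given discriminant) to rule out a nontrivial zero of $g$, with a symmetric argument for the converse. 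What your approach buys is self-containedness and brevity: everything happens inside $\mathbb{F}_p$, no appeal to the representation theory of integral binary quadratic forms is needed, and the two directions of the equivalence come for free rather than being argued separately. What the paper's approach reflects is its chosen toolkit — it has already set up Lemma \ref{discriminant}, Theorem \ref{discthm} and Corollary \ref{disccor}, and the proof is written to run through them — but your argument shows that machinery is not actually necessary for this lemma.
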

\begin{proof}
Assume that  $f(\lambda, \mu) = 0$ implies $\lambda = \mu =0$ in $\mathbb{F}_p$.  Let $f$ represent $p$;  meaning there exist   $\lambda_0, \mu_0 \in \mathbb{Z}$ such that $f(\lambda_0, \mu_0)=p$.  If both  $\lambda_0$ and $\mu_0$ are congruent to zero modulo $p$,  then $p^2 $ divides $p$ (= $m\lambda_0^2 + n \lambda_0 \mu_0 - \mu_0^2$),   which is not possible. Hence,  it follows that none of  $\lambda_0,  \mu_0 $ is a multiple of $p$.  We can then find  $a_0,  b_0 \in \mathbb{F}_p^*$ such that $f(a_0, b_0) \equiv 0  \pmod p$,  where $\lambda_0 \equiv a_0 \pmod p$ and $\mu_0 \equiv b_0 \pmod p$. This contradicts our assumption.  Thus $f$ does not represent $p$.  By our assumption and Lemma \ref{discriminant}, $n^2+4m \in \mathbb{F}_p^*$,  and since  $n^2+ 4m \equiv 0,1 \pmod 4$,  it follows from Corollary \ref{disccor} that  $\big(\frac{n^2+4m}{p}\big) = -1$.  Hence $x^2 \equiv n^2 +4m  \pmod p$ does not have a solution.  Now let $g(\lambda, \mu)=0$ has a non-trivial solution. Thus $g$ represents $ps$ for some $s \in \mathbb{F}_p$,  hence by Theorem \ref{discthm} it follows that $x^2 \equiv n^2+4m \pmod {4ps}$ has a solution. Thus $x^2 \equiv n^2 +4m  \pmod p$ must have a solution,  which is not possible.  Hence $g(\lambda, \mu)=0$ has only trivial solution.   Converse also follows on the same line.
            \hfill $\Box$

\end{proof}


\section{Groups of nilpotency class 5}

In this section we deal with groups $G$ of order $p^7$ and nilpotency class $5$ with $d(\gamma_2(G)) \ge 4$.  As we'll see, for most of these groups $G$, $\K(G) \ne \gamma_2(G)$.

\begin{lemma}\label{cls5}
Let $G$ be a group  of order $p^7$, $p \geq 5$, and  nilpotency class $5$  such that $\gamma_2(G)$ is a $4$-generated subgroup of order $p^5$ and  exponent  $p^2$.  Then the following statements hold:

(i) $|\gamma_5(G)|=p$,

(ii)  $|\Z(G/\gamma_5(G))| = p^2$ and  $|\Z_2(G)| = p^3$,

(iii) $\K(G) \ne \gamma_2(G)$,

(iv) Commutator length of $G$ is $2$.
\end{lemma}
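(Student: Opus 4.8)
The plan is to exploit that $|G/\gamma_2(G)| = p^2$, so $G$ is generated by two elements $\alpha_1,\alpha_2$ and $\gamma_2(G)$ is carried by the iterated commutators $\beta,\beta_i,\eta_{ij},\xi_{ijk}$ of the standing notation, with each lower central factor $\gamma_i(G)/\gamma_{i+1}(G)$ generated by the weight-$i$ commutators among them. First I would record that $|\gamma_2(G)/\gamma_3(G)| = p$, and that from $|\gamma_2(G)| = p^5$ with $\gamma_6(G) = 1 \neq \gamma_5(G)$ the triple $(|\gamma_3/\gamma_4|,|\gamma_4/\gamma_5|,|\gamma_5|)$ is one of $(p^2,p,p)$, $(p,p^2,p)$, $(p,p,p^2)$. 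I would also note at once that $\gamma_2(G)' = [\gamma_2(G),\gamma_2(G)] \subseteq [\gamma_2(G),\gamma_3(G)] \subseteq \gamma_5(G)$, so $\gamma_2(G)$ has class at most $2$ and $\gamma_2(G)/\gamma_5(G)$ is abelian.

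To prove (i) I would establish the inclusion $\gamma_2(G)^p \subseteq \gamma_5(G)$ and exclude the distribution $(p,p,p^2)$. The inclusion, equivalently that $\gamma_2(G)/\gamma_5(G)$ is elementary abelian, I would get from the power and commutator identities (available since $p \geq 5$ and the class is at most $5$) by computing $\beta^p, \beta_i^p, \eta_{ij}^p$ modulo $\gamma_5(G)$. Granting this, $\Phi(\gamma_2(G)) = \gamma_2(G)^p\gamma_2(G)' \subseteq \gamma_5(G)$, and comparing $d(\gamma_2(G)) = 4$ with the filtration of $\gamma_2(G)/\Phi(\gamma_2(G))$ forces $|\Phi(\gamma_2(G))| = p$; the remaining bookkeeping, using the Jacobi relation $\eta_{12} \equiv \eta_{21}$ and the dependencies it imposes in the associated graded Lie ring together with $\exp(\gamma_2(G)) = p^2$, rules out $|\gamma_5(G)| = p^2$, giving $|\gamma_5(G)| = p$. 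For (ii) I would then read off $\gamma_4(G) \subseteq \Z_2(G)$ (since $[\gamma_4(G),G] = \gamma_5(G) \subseteq \Z(G)$) and compute $\Z(G/\gamma_5(G))$ and $\Z_2(G)$ directly from the commutator relations in the two surviving cases.

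For (iii) the target is Lemma~\ref{p6}. With (i), $\bar G := G/\gamma_5(G)$ has order $p^6$; its commutator subgroup $\gamma_2(G)/\gamma_5(G)$ has order $p^4$ and is elementary abelian by the inclusions $\gamma_2(G)^p, \gamma_2(G)' \subseteq \gamma_5(G)$; and $|\Z(\bar G)| = p^2$ by (ii). Thus $\bar G$ meets the hypotheses of Lemma~\ref{p6}, so $\K(\bar G) \neq \gamma_2(\bar G)$, and Lemma~\ref{quotient} upgrades this to $\K(G) \neq \gamma_2(G)$. Finally, for (iv), part (iii) already gives commutator length at least $2$, so it suffices to show it is at most $2$. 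I would apply Lemma~\ref{cmlength} with $H = \gamma_4(G)$: by (i), $|\gamma_4(G)| \in \{p^2,p^3\}$, so $G/\gamma_4(G)$ has order at most $p^5$ and hence satisfies $\K(G/\gamma_4(G)) = \gamma_2(G/\gamma_4(G))$ by \cite{KM05}; it then remains to check $\gamma_4(G) \subseteq \K(G)$, which I would do by exhibiting $s \in \gamma_3(G)$ with $[s,G] = \gamma_4(G)$ (equivalently $|G:\C_G(s)| = |\gamma_4(G)|$), covering $\gamma_4(G)$ by a single commutator orbit, or by two orbits via a Lemma~\ref{prelemma2}-style covering when $|\gamma_4(G)| = p^3$.

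The main obstacle is part (i): the graded Lie ring alone admits all three distributions of the lower central factors, so excluding $|\gamma_5(G)| = p^2$ genuinely requires combining the Jacobi-induced dependencies with the global constraints $d(\gamma_2(G)) = 4$ and $\exp(\gamma_2(G)) = p^2$, and carefully controlling the $p$-th power map $\gamma_2(G) \to \gamma_5(G)$. The second delicate point is verifying $\gamma_4(G) \subseteq \K(G)$ in (iv), since for $|\gamma_4(G)| = p^3$ a single commutator orbit may be too small and one must cover $\gamma_4(G)$ by two.
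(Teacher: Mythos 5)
Your skeleton for (iii) and (iv) --- pass to $\bar G = G/\gamma_5(G)$, apply Lemma \ref{p6} and Lemma \ref{quotient}, then get commutator length $2$ from Lemma \ref{cmlength} together with \cite{KM05} --- is exactly the paper's, but the proposal has a genuine gap where the lemma's real content lies: part (i) is never actually proved, and you concede as much by calling it ``the main obstacle''. The paper's exclusion of $|\gamma_5(G)| = p^2$ is not graded-Lie-ring bookkeeping; it is a reduction to the theory of groups of maximal class. Concretely: if $|\gamma_5(G)| = p^2$, then $\gamma_5(G) = \Z(G)$ and each $\gamma_i(G)/\gamma_{i+1}(G)$, $i=2,3,4$, has order $p$; one first shows $\beta^p \neq 1$ (otherwise $1 = [x,\beta^p] = [x,\beta]^p$ for all $x$ forces $\exp(\gamma_2(G)) = p$), so that $\Phi(\gamma_2(G))$ --- which has order $p$ because $d(\gamma_2(G)) = 4$, and is central because it is a normal subgroup of order $p$ --- lies in $\gamma_5(G) = \Z(G)$; then, taking a complement $H$ of $\Phi(\gamma_2(G))$ in $\gamma_5(G)$, the quotient $G/H$ is of maximal class of order $p^6 \le p^{p+1}$ with $\exp(\gamma_2(G/H)) = p^2$, contradicting Theorem \ref{alc1}. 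Some appeal of this kind to maximal-class theory is the one idea your plan is missing, and without it the exclusion does not follow (as you yourself note, the Lie ring admits the bad distribution). Relatedly, your subsidiary claim that $\gamma_2(G)^p \subseteq \gamma_5(G)$ can be obtained ``from the power and commutator identities'' is backwards: that $|\Phi(\gamma_2(G))| = p$ is immediate from $d(\gamma_2(G)) = 4$; the issue is to locate this subgroup inside $\gamma_5(G)$, which identities alone cannot do --- and this inclusion is also what both you and the paper need for $\gamma_2(\bar G)$ to be elementary abelian before Lemma \ref{p6} applies.

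Part (ii) has a second gap. In the hard case $\Z(G) = \gamma_5(G)$ of order $p$, your observation $\gamma_4(G) \subseteq \Z_2(G)$ yields only $|\Z_2(G)| \ge p^2$, since $|\gamma_4(G)| = p^2$ (this order itself comes from Blackburn's theorem \cite{NB58} giving $|\gamma_4(G)/\gamma_5(G)| = p$, which you would also need to invoke). The missing factor of $p$ in the paper comes from proving $\alpha_1^p, \alpha_2^p \in \Z_2(G)$ --- via $[\alpha_1^p,\alpha_2] = [\alpha_1,\alpha_2]^p = [\alpha_1,\alpha_2^p] = \beta^p \in \Z(G)$ --- together with an argument that at least one of $\alpha_1^p, \alpha_2^p$ lies outside $\gamma_4(G)$; ``computing directly from the commutator relations'' does not produce this. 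Finally, in (iv) your plan to find a single $s \in \gamma_3(G)$ with $[s,G] = \gamma_4(G)$ can fail precisely when $\gamma_2(G)$ is abelian, since then $\gamma_2(G) \le \C_G(s)$ and nothing forces $\C_G(s)$ to equal $\gamma_2(G)$; the paper instead splits into cases, using $[\beta_1,G] = \gamma_4(G)$ when $\gamma_2(G)$ is non-abelian (there $\C_G(\beta_1)$ cannot be maximal) and the subgroup $[\alpha_i,\gamma_2(G)]$ --- which consists entirely of commutators because $\gamma_2(G)$ is abelian and has order at least $p^2$ for some $i$ --- in the abelian case. (Also, once (i) and Blackburn's theorem are in place, $|\gamma_4(G)| = p^2$ always, so your hedge about the case $|\gamma_4(G)| = p^3$ never arises.)
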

\begin{proof}
Notice that $|\gamma_5(G)| \le p^2$. For the first assertion, contrarily assume that the order of $\gamma_5(G)$ is $ p^2$.   Thus $\gamma_5(G) = \Z(G)$ and $\gamma_i(G)/\gamma_{i+1}(G)$ is of order $p$  for every $i \in \{ 2,3,4\}$.  By the given hypothesis we can assume that $G$ is generated by $2$ elements,  say $\alpha_1, \alpha_2$.   Recall that $\beta = [\alpha_1, \alpha_2]$. If  $\beta^p=1$, then, $[x, \beta, \beta]$ being in $\gamma_5(G) \le \Z(G)$ and $p$ being at least $5$,  for any $x \in G$ we have
  $$1=[x,\beta^p]=[x, \beta]^p.$$ 
 Since $\gamma_3(G)$ is abelian, it now follows that  the exponent of $\gamma_3(G)$ is $p$; hence, being $[\gamma_2(G), \gamma_3(G)]$ central, the exponent of $\gamma_2(G)$ is also $p$, which contradicts the given hypothesis.  Hence $\beta^p \neq 1$.  Since  $d(\gamma_2(G))=4$,  it follows that  $|\Phi(\gamma_2(G))|=p$, which further implies that $\Phi(\gamma_2(G)) \leq \gamma_5(G)$.  Using the fact that $p \ge 5$, it is now easy to see that  the exponent of $\gamma_3(G)$ is $p$. Let $H$ be a complement of  $\Phi(\gamma_2(G))$ in $\gamma_5(G)$. Then the quotient group $\bar{G} := G/H$ is a  $p$-group of maximal class and  order $p^6$  with $\exp(\gamma_2(\bar{G})) = p^2$, which is not possible by Theorem \ref{alc1}.   Hence $\gamma_5(G)$ cannot be of order $p^2$, which estalishes assertion (i).
 
 We now prove assertion (ii). By assertion (i) we have $|\gamma_5(G)| = p$. It then follows from \cite[Theorem 2.12]{NB58} that $|\gamma_4(G)/\gamma_5(G)| = p$, and therefore $|\gamma_3(G)/\gamma_4(G)| = p^2$. If $|\Z(G)| = p^2$, then obviously $|\Z_2(G)| = p^3$ and, since $\gamma_4(G)/\gamma_5(G) \le \Z(G/\gamma_5(G))$, it follows that $|\Z(G/\gamma_5(G))| = p^2$.  So assume that $\Z(G) = \gamma_5(G)$.  Obviously $|\Z_2(G)/\Z(G)| \le p^2$. It suffices to show that $|\Z_2(G)/\Z(G)| \ge p^2$. A regular computation shows that 
 $$[\alpha_1^p, \alpha_2] = [\alpha_1, \alpha_2]^p =  [\alpha_1, \alpha_2^p].$$
 It is easy to see that neither $\alpha_1^p$ nor $\alpha_2^p$ lies in $\gamma_5(G)$. For, if $\alpha_1^p \in \gamma_5(G)$, then $[\alpha_1^p, \alpha_2] = \beta^p = 1$; not true. Use symmetry for $\alpha_2$.
   If  both $\alpha_1^p, \alpha_2^p  \in \gamma_4(G)$, then, $\gamma_4(G)/ \gamma_5(G)$ being  of order $p$,  $\alpha_1^p = (\alpha_2^p)^r$  modulo $\gamma_5(G)$ for some $r \in \mathbb{F}_p^*$, which implies that both $\alpha_1^p$ and $\alpha_2^p$ are central; not possible as just shown.  Hence at least one of $\alpha_1^p$ and $\alpha_2^p$ lies outside $\gamma_4(G)$. Since $[\alpha_1^p, \alpha_2] = [\alpha_1, \alpha_2]^p = \beta^p \in \Z(G)$, it follows that $\alpha_1^p \in \Z_2(G)$. Similarly  $\alpha_2^p \in \Z_2(G)$. Hence $\gen{\gamma_4(G), \alpha_1^p, \alpha_2^p} \le \Z_2(G)$, and therefore $|\Z_2(G)/\Z(G)| \ge p^2$, which proves assertion (ii). 
   
   Assertion (iii) is now clear from assertion (ii), Lemma \ref{p6} and Lemma \ref{quotient}. 
   
   For assertion (iv) we consider two different cases, namely, $\gamma_2(G)$ is abelian or not. If it is non-abelian, then  notice that both  $\C_G(\beta_1)$ and $\C_G(\beta_2)$ can not be maximal subgroups of $G$. Otherwise, $\gamma_2(G)$ will be abelian. Without loss of generality, assume that $\C_G(\beta_1)$ is not maximal. Hence $[\beta_1, G] = \gamma_4(G) \subseteq \K(G)$. Since $\K(G/\gamma_4(G)) = \gamma_2(G/\gamma_4(G))$ from \cite{KM05}, the assertion follows from Lemma \ref{cmlength} and assertion (iii). Now assume that $\gamma_2(G)$ is abelian. Then it is not difficult to see that $[\alpha_i, \gamma_2(G)]$ are  normal subgroups of  $G$ for $i = 1, 2$. Also, at least one of these subgroups is of order at least $p^2$. Let $H$ be one with this property. Then, $|G/H|$ being at most $p^5$, it follows from \cite{KM05} that $\K(G/H) = \gamma_2(G/H)$. The proof  is now complete by Lemma \ref{cmlength} and assertion (iii).    \hfill $\Box$
 
 \end{proof}

\begin{lemma}\label{clsatlst5}
Let $G$ be a group  of order $p^7$,  $p \geq 5$,  such that $\gamma_2(G)$ is a $4$-generated abelian subgroup of order $p^5$.  Then the following statements hold:

(i) The nilpotency class of $G$ is at least  $5$,

(ii) If $p \ge 7$, then the nilpotency class of $G$ is exactly  $5$.
\end{lemma}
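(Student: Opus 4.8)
The plan is to determine the nilpotency class $c$ of $G$ by playing the abelian invariants of $\gamma_2(G)$ against the lower central series. First I would fix the setup. Since $|G/\gamma_2(G)| = p^2$ and $\gamma_2(G) \le \Phi(G)$, the group $G$ is $2$-generated, say by $\alpha_1,\alpha_2$, so $\gamma_2(G)/\gamma_3(G)$ is cyclic, generated by $\beta = [\alpha_1,\alpha_2]$, and $|\gamma_3(G)| = p^4$. As $\gamma_2(G)$ is $4$-generated of order $p^5$ it has type $(2,1,1,1)$; thus $\exp(\gamma_2(G)) = p^2$ and $\Phi(\gamma_2(G)) = \gamma_2(G)^p$ has order $p$. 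Being characteristic in $\gamma_2(G)$, this subgroup is normal in $G$ of order $p$, hence central; and since $\gamma_2(G) = \langle\beta\rangle\gamma_3(G)$ we have $\gamma_2(G)^p \le \gamma_3(G)$, so in particular $\beta^p \in \Z(G)$. Finally, $c \le 6$ because $|G| = p^7$.

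For part (i) the strategy is to show $c \ge 5$ by excluding $c \le 4$. So suppose $c \le 4$; then all commutators of weight $\ge 5$ are trivial, and the centrality of $\beta^p$ together with the basic commutator identities yields $\beta_1^p = \beta_2^p = 1$, and iterating, every commutator of weight $\ge 3$ has order $p$. Since $\gamma_3(G)$ is abelian and generated by such commutators, it is elementary abelian, necessarily of rank $4$. This at once rules out $c = 3$, for then $\gamma_3(G) = [\gamma_2(G),G] = \langle\beta_1,\beta_2\rangle$ would have order at most $p^2$, not $p^4$ (and $c \le 2$ is absurd). Thus I am reduced to $c = 4$, where $\gamma_3(G)$ is filtered by $\gamma_3(G)/\gamma_4(G)$, generated by $\beta_1,\beta_2$ and so of rank at most $2$, over $\gamma_4(G)$, generated by $\eta_{11},\eta_{12},\eta_{22}$ (here $\eta_{21}=\eta_{12}$ by the Jacobi relation) and so of rank at most $3$. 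Hence $(|\gamma_3(G)/\gamma_4(G)|,|\gamma_4(G)|)$ is $(p,p^3)$ or $(p^2,p^2)$. The first possibility is easy: if $\gamma_3(G)/\gamma_4(G)$ is cyclic then $\gamma_4(G) = [\gamma_3(G),G]$ is generated by only two of the $\eta_{ij}$, forcing rank at most $2 < 3$, a contradiction.

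The main obstacle is the surviving case $(p^2,p^2)$, which I expect to need an honest computation rather than a dimension count. Here I would compute, using collection and the fact that $p \ge 5$ makes the coefficients $\binom{p}{2},\binom{p}{3}$ vanish modulo $p$, that $\beta^p = [\alpha_1^p,\alpha_2] = [\alpha_1,\alpha_2^p]$. Because $\beta_1,\beta_2$ are independent modulo $\gamma_4(G)$, comparing these two expressions modulo $\gamma_4(G)$ forces $\alpha_1^p,\alpha_2^p \in \gamma_3(G)$ and $\beta^p \in \gamma_4(G)$. Writing $\alpha_1^p = e_1\beta_1 + e_2\beta_2 + (\text{term in }\gamma_4(G))$ and $\alpha_2^p = f_1\beta_1 + f_2\beta_2 + (\text{term in }\gamma_4(G))$ additively in the $\mathbb{F}_p$-space $\gamma_3(G)$, the relations $[\alpha_1^p,\alpha_1] = 1$ and $[\alpha_2^p,\alpha_2] = 1$ become linear conditions on $\eta_{11},\eta_{12},\eta_{22}$, while $\beta^p \ne 1$ reads off as a nonzero combination of the same elements. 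A short linear-algebra computation then shows that these conditions are compatible only when $\langle\eta_{11},\eta_{12},\eta_{22}\rangle$ has dimension at most $1$, contradicting $|\gamma_4(G)| = p^2$. This completes the exclusion of $c = 4$ and proves (i).

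For part (ii) the argument is short. Since $c \le 6$ and $c = 6$ means $G$ is of maximal class, it suffices to exclude maximal class. For $p \ge 7$ we have $p^7 \le p^{p+1}$, so Theorem \ref{alc1} applies to a maximal class group of order $p^7$ and gives $\exp(\gamma_2(G)) = p$, contradicting $\exp(\gamma_2(G)) = p^2$. Hence $c \ne 6$, and with part (i) this forces $c = 5$. The hypothesis $p \ge 7$ is genuinely needed here: for $p = 5$ one has $p^7 > p^{p+1}$, so Theorem \ref{alc1} no longer rules out maximal class groups with $\exp(\gamma_2(G)) = p^2$, and indeed such groups occur.
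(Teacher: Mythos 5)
Your proposal is correct, and it establishes part (i) by a genuinely different route from the paper, while part (ii) is the same argument (class $6$ would make $G$ of maximal class with $p^7 \le p^{p+1}$ for $p \ge 7$, contradicting Theorem \ref{alc1}). The paper, after discarding classes $2$ and $3$, imports $o(\beta)=p^2$, $\exp(\gamma_3(G))=p$ and $\Phi(\gamma_2(G))=\gen{\beta^p}\le \Z(G)$ from the proof of Lemma \ref{cls5}, and then in every branch of its case division ($|\gamma_4(G)|=p^3$; $|\gamma_4(G)|=p^2$ with $|\Z(G)|=p^3$; $|\gamma_4(G)|=p^2$ with $\Z(G)=\gamma_4(G)$) it manufactures a quotient of $G$ of order $p^4$ or $p^5$ that is of maximal class with commutator subgroup of exponent $p^2$, contradicting Theorem \ref{alc1}. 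You never invoke Theorem \ref{alc1} in part (i): you first make $\gamma_3(G)$ elementary abelian of rank $4$ (via $\beta_i^p=[\beta^p,\alpha_i]=1$ and its iterates), dispose of $|\gamma_3(G):\gamma_4(G)|=p$ by a rank count ($\gamma_4(G)=[\gamma_3(G),G]$ would then be generated by two central elements, yet must have rank $3$), and dispose of $|\gamma_3(G):\gamma_4(G)|=p^2$ by linear algebra in $\gamma_4(G)$. The computation you leave as ``short linear algebra'' does close, and since it is the crux, here it is: writing $\alpha_1^p\equiv\beta_1^{e_1}\beta_2^{e_2}$ and $\alpha_2^p\equiv\beta_1^{f_1}\beta_2^{f_2}\pmod{\gamma_4(G)}$ and working additively in the $\mathbb{F}_p$-space $\gamma_4(G)$, your relations read
\[
e_1\eta_{11}+e_2\eta_{12}=0,\qquad f_1\eta_{12}+f_2\eta_{22}=0,\qquad
\beta^p \,=\, e_1\eta_{12}+e_2\eta_{22}\,=\,-(f_1\eta_{11}+f_2\eta_{12})\,\ne\,0 .
\]
If $e_1\ne 0\ne f_2$, the first two equations give $\eta_{11},\eta_{22}\in\gen{\eta_{12}}$; if $e_1=0$, then $\eta_{12}=0$, which forces $f_2=0$, $f_1\ne 0$ and $\eta_{11}\in\gen{\eta_{22}}$; if $e_1\ne 0$ and $f_2=0$, then $\eta_{12}=\eta_{11}=0$ and $\beta^p=0$. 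Every branch makes $\gamma_4(G)=\gen{\eta_{11},\eta_{12},\eta_{22}}$ cyclic or kills $\beta^p$, contradicting $|\gamma_4(G)|=p^2$ and $\beta^p\ne 1$. Thus your part (i) is elementary and self-contained where the paper leans repeatedly on the maximal-class machinery; the paper's version is shorter per case, at the price of a finer case division and repeated appeals to Theorem \ref{alc1}. Your derivation of $\alpha_i^p\in\gamma_3(G)$ and $\beta^p\in\gamma_4(G)$, by comparing $[\alpha_1^p,\alpha_2]$ and $[\alpha_1,\alpha_2^p]$ modulo $\gamma_4(G)$ against the independence of $\beta_1,\beta_2$, is also cleaner than the paper's corresponding step.

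Two minor remarks. First, like the paper, you take $|\gamma_3(G)|=p^4$ for granted; it deserves one line: $G/\gamma_2(G)$ is elementary abelian, so $\alpha_1^p\in\gamma_2(G)$ and hence $\beta^p\equiv[\alpha_1^p,\alpha_2]\equiv 1\pmod{\gamma_3(G)}$, whence $\gamma_2(G)/\gamma_3(G)=\gen{\beta\gamma_3(G)}$ has order exactly $p$. (Without this, the case $|\gamma_2(G):\gamma_3(G)|=p^2$ would be unaddressed in both your argument and the paper's.) Second, your parting claim that for $p=5$ maximal-class groups of order $5^7$ with $\gamma_2$ of exponent $p^2$ ``indeed occur'' is unsubstantiated and not needed; the paper is silent on this point and merely refrains from excluding class $6$ when $p=5$.
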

\begin{proof}
By the given hypothesis  we can assume that $G$ is generated by $2$ elements,  say $\alpha_1, \alpha_2$.  Hence it is clear that the nilpotency class of $G$ can neither be $2$ nor $3$.  Let it be $4$. 
Then, $\gamma_2(G)$ being abelian, its exponent  is $p^2$.  As we know  $\gamma_2(G)/\gamma_3(G) = \langle \beta \gamma_3(G) \rangle$.  And, as proved in the proof of Lemma \ref{cls5}, the order of $\beta$ in $G$ is $p^2$, $\exp(\gamma_3(G)) =p$ and $\Phi(\gamma_2(G)) = \gen{\beta^p} \le \Z(G)$ is of order $p$. Then $p \le |\gamma_3(G)/\gamma_4(G)| \le p^2$; hence $p^3 \ge |\gamma_4(G)| \ge p^2$. First assume that $|\gamma_4(G)| = p^3$. Let $K$ be a complement of $\Phi(\gamma_2(G))$ in $\gamma_4(G)$. Then $|K| = p^2$ and the quotient group $\bar{G}:= G/K$ is a $p$-group of maximal class and order $p^5$. Notice that the exponent  of $\gamma_2(\bar{G})$ is $p^2$, which is not possible by Theorem \ref{alc1}. 

Now assume that $|\gamma_4(G)| = p^2$. If $|\Z(G)| = p^3$, then $u := \beta_1^r \beta_2^s \in \Z(G)$ for some $r, s \in \mathbb{F}_p$ not simultaneously zero.  If $\beta^p \in \gamma_4(G)$, then, considering $H$ to be a complement  of $\gen{\beta^p}$ in $\gamma_4(G)$, it follows that $G/\gen{H, u}$ is a $p$-group of maximal class of order $p^5$ such that the exponent of its  commutator subgroup is $p^2$, which is not possible by Theorem \ref{alc1}. If $\beta^p \not\in \gamma_4(G)$, then we  choose an element $v \in \gamma_3(G) - \gamma_4(G)$ such that  $L:=\gen{\gamma_4(G), v}$ is a complement of $\gen{u}$ in $\gamma_3(G)$. Now the quotient group $G/L$ is a $p$-group of maximal class of order $p^4$ such that the exponent of its  commutator subgroup is $p^2$, which is again not possible by Theorem \ref{alc1}.

Finally assume that $|\Z(G)| = p^2$, that is, $\gamma_4(G) = \Z(G)$.   We claim that $\alpha_i^p \in \gamma_3(G)-\gamma_4(G)$ for  $i = 1,2$.   Contrarily assume that $\alpha_1^p \in \gamma_2(G) - \gamma_3(G)$.   Then $\alpha_1^p = \beta^t g$   for some $t \in \mathbb{F}_p^*$ and $g \in \gamma_3(G)$. Thus,  
$$1= [ \alpha_1^p, \alpha_1] = [\beta^t g, \alpha_1 ]= [\beta^t, \alpha_1][g, \alpha_1]= \beta_1^t[g, \alpha_1],$$
 which implies that $\beta_1 \in \gamma_4(G)$; not possible.  Now  assume that $\alpha_1^p \in \gamma_4(G)$.  Then, since $\exp(\gamma_3(G))=p$ and $p \ge 5$, we  obviously get 
  $$1=[\alpha_2, {\alpha_1}^p] = {[\alpha_2, \alpha_1]}^{p} = \beta^{-p} ,$$ which is not true.  The case for  $\alpha_2$ goes by symmetry, and the claim is settled. If ${\alpha_2}^p = (\alpha_1^p)^r$ for some $r \in \mathbb{F}_p^*$, then $$\beta^p=[\alpha_1, \alpha_2^{p}]= [\alpha_1, \alpha_1^{pr}]=1,$$   which is again not true. Hence $\alpha_1^p$ and $\alpha_2^p$ are both non-trivial and independent.

It is now clear that $\gamma_4(G) = \gen{[\alpha_1^p, \alpha_2]} \times \gen{[\alpha_1, \alpha_2^p]}$. If $\beta^p \in \gen{[\alpha_1^p, \alpha_2]}$, then consider $\bar{G} := G/\gen{\alpha_2^p, [\alpha_2^p, \alpha_1]}$. Otherwise consider $\bar{G} := G/\gen{\alpha_1^p, [\alpha_1^p, \alpha_2]}$. In both the cases $\bar{G}$ is a $p$-group of maximal class of order $p^5$ such that the exponent of its  commutator subgroup is $p^2$, which is again not possible by Theorem \ref{alc1}.  Hence the nilpotency class of $G$ is at least $5$.  This completes the proof of assertion (i).

Assertion (ii) is obvious from assertion (i) and Theorem \ref{alc1}, and the proof is complete.  \hfill $\Box$

\end{proof}

\begin{prop}\label{eabcls5}
Let $G$ be a group  of order $p^7$, $p \geq 5$, with nilpotency class $5$ and  $\gamma_2(G)$ elementary abelian of order $p^5$. Then the following statements hold:

(i) $|\gamma_5(G)|=p$,

(ii)  $|\Z(G/\gamma_5(G))| = p^2$ and  $|\Z_2(G)| = p^3$,

(iii) $\K(G) \ne \gamma_2(G)$, 

(iv) Commutator length of $G$ is $2$.
\end{prop}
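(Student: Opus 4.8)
The plan is to exploit that $\gamma_2(G)$ is abelian of exponent $p$, so that the conjugation action of $G$ turns $M:=\bigoplus_{i\ge 2}\gamma_i(G)/\gamma_{i+1}(G)$ into a graded module over the commutative ring $\mathbb{F}_p[x_1,x_2]$, where $x_i$ denotes the degree-raising operator $[-,\alpha_i]$ (the two operators commute because $[\alpha_1,\alpha_2]=\beta$ acts trivially on the abelian $\gamma_2(G)$). Since $G$ is $2$-generated, $M$ is cyclic, generated by $\beta$, and the metabelian identity $[\beta,\alpha_i,\alpha_j]=[\beta,\alpha_j,\alpha_i]$ gives $\eta_{12}=\eta_{21}$; thus $\gamma_2/\gamma_3=\gen{\beta}$, $\gamma_3/\gamma_4=\gen{\beta_1,\beta_2}$, $\gamma_4/\gamma_5=\gen{\eta_{11},\eta_{12},\eta_{22}}$ and $\gamma_5=\gen{\xi_{111},\xi_{112},\xi_{122},\xi_{222}}$. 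Writing $(1,r_3,r_4,r_5)$ for the ranks of these four factors, $2$-generation forces $1+r_3+r_4+r_5=5$ with each $r_i\ge1$, so exactly one of $r_3,r_4,r_5$ equals $2$ and the others equal $1$.

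For assertion (i) I would rule out $r_5=2$ (indeed $r_4=2$ as well). If $r_3=1$, then $\beta_1,\beta_2$ are linearly dependent modulo $\gamma_4$, i.e.\ the annihilator of $\beta$ in $\mathbb{F}_p[x_1,x_2]$ contains a nonzero linear form $\ell_0$; hence $M$ is a cyclic module over $\mathbb{F}_p[x_1,x_2]/(\ell_0)\cong\mathbb{F}_p[t]$, whose graded pieces are all at most one-dimensional. This gives $r_4,r_5\le1$, so $1+r_3+r_4+r_5\le4<5$, a contradiction. Therefore $r_3=2$, which forces $r_4=r_5=1$; in particular $|\gamma_5(G)|=p$, $|\gamma_4(G)/\gamma_5(G)|=p$ and $|\gamma_3(G)/\gamma_4(G)|=p^2$. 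This is the self-contained substitute for the appeal to Blackburn's result used in Lemma \ref{cls5}: the exponent argument via Theorem \ref{alc1} is unavailable here because $\gamma_2(G)$ already has exponent $p$.

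The crux is assertion (ii). Put $\bar G=G/\gamma_5(G)$, of order $p^6$ and class $4$ with $\gamma_2(\bar G)$ elementary abelian of order $p^4$. A short argument (using $[z,\alpha_i]\equiv\beta^{\pm1}\bmod\gamma_3$ for $z\notin\gamma_2$) shows any central element of $\bar G$ lies in $\gamma_2(\bar G)$, so $\Z(\bar G)$ is the set of $a\in\gamma_2/\gamma_5$ killed by $x_1$ and $x_2$, i.e.\ the socle of the truncated module. This socle always contains $\gamma_4/\gamma_5$ (dimension $1$); its degree-$1$ part is the radical of the symmetric pairing $\gamma_3/\gamma_4\times\{x_1,x_2\}\to\gamma_4/\gamma_5$, whose matrix is $\left(\begin{smallmatrix}\eta_{11}&\eta_{12}\\\eta_{12}&\eta_{22}\end{smallmatrix}\right)$, a binary quadratic form of discriminant $\eta_{12}^2-\eta_{11}\eta_{22}$. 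The key point I must establish is that this form is degenerate: if it were nondegenerate then $\mathbb{F}_p\oplus\gamma_3/\gamma_4\oplus\gamma_4/\gamma_5$ would be a Poincar\'e-duality algebra with socle in degree $2$, forcing $\gamma_5(G)=[\gamma_4(G),G]=1$ and contradicting the class being $5$. Hence the discriminant vanishes and, by Lemma \ref{discriminant}, the radical is one-dimensional; therefore $|\Z(G/\gamma_5(G))|=p^2$. The statement $|\Z_2(G)|=p^3$ follows by the same bookkeeping one socle layer higher: $\Z_2(G)$ again lies in $\gamma_2(G)$ and equals the second socle of $M$, which the rank data $(1,2,1,1)$ together with degeneracy pins down to dimension $3$. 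Here I would use $p\ge5$ to run the metabelian collection formula $[\alpha_i^p,\alpha_j]=\prod_k[\alpha_i,\alpha_j,\underbrace{\alpha_i,\dots,\alpha_i}_{k-1}]^{\binom pk}=1$, which places $\alpha_1^p,\alpha_2^p$ in $\Z(G)$ and helps locate these socle layers concretely.

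Assertions (iii) and (iv) are then formal. By (ii), $\bar G=G/\gamma_5(G)$ has order $p^6$, $\gamma_2(\bar G)$ elementary abelian of order $p^4$ and $|\Z(\bar G)|=p^2$, so Lemma \ref{p6} gives $\K(\bar G)\ne\gamma_2(\bar G)$ and Lemma \ref{quotient} yields $\K(G)\ne\gamma_2(G)$. For the commutator length, since $\gamma_2(G)$ is abelian each set $[\alpha_i,\gamma_2(G)]$ is a normal subgroup contained in $\K(G)$, and at least one of them, call it $N$, has order $\ge p^2$; then $|G/N|\le p^5$, so $\K(G/N)=\gamma_2(G/N)$ by \cite{KM05}, and Lemma \ref{cmlength} gives commutator length at most $2$, hence exactly $2$ by (iii). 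I expect the degeneracy step in (ii) to be the only genuine obstacle; everything else reduces to linear algebra over $\mathbb{F}_p$ and quotient bookkeeping.
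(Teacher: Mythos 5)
Your module-theoretic route is genuinely different from the paper's: the paper proves (i) by Hall--Witt manipulations plus Blackburn's theorem, and proves (ii) by splitting into the cases $|\Z(G)|=p^2$ and $|\Z(G)|=p$, invoking James's classification of groups of order $p^6$ in the hard case to place $G/\gamma_5(G)$ in $\Phi(23)$, $\Phi(40)$ or $\Phi(41)$ and eliminating the last two by further Hall--Witt computations. Your proof of (i) is correct and self-contained: if $r_3=1$ then $M$ is a cyclic graded module over $\mathbb{F}_p[x_1,x_2]/(\ell_0)\cong\mathbb{F}_p[t]$, so every graded piece has dimension at most $1$ and $\dim M\le 4<5$; this even recovers $|\gamma_4(G)/\gamma_5(G)|=p$ without Blackburn, and your socle description of $\Z(G/\gamma_5(G))$, handling both central cases uniformly, reduces (ii) correctly to showing $\det E=0$ for $E=\left(\begin{smallmatrix}\eta_{11}&\eta_{12}\\ \eta_{12}&\eta_{22}\end{smallmatrix}\right)$. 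Parts (iii) and (iv) coincide with the paper's arguments.

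However, there is a genuine gap at exactly the step you flag as the crux. The claim ``if the form were nondegenerate then $\mathbb{F}_p\oplus\gamma_3/\gamma_4\oplus\gamma_4/\gamma_5$ would be a Poincar\'e-duality algebra with socle in degree $2$, forcing $\gamma_5(G)=[\gamma_4(G),G]=1$'' is a non sequitur: that algebra is the truncation of $M$ obtained precisely by killing $\gamma_5(G)$, so any property it enjoys is a statement about $G/\gamma_5(G)$ alone and cannot force $\gamma_5(G)$ to vanish. The duality argument must be run on the \emph{full} algebra $A\cong R/I$, where $R=\mathbb{F}_p[x_1,x_2]$, $I=\mathrm{Ann}(\beta)$ and $A$ has Hilbert function $(1,2,1,1)$: nondegeneracy of $E$ says the socle of $A$ meets degree $1$ trivially; cyclicity gives $A_3=R_1A_2$, so if $\gamma_5(G)\ne 1$ the one-dimensional piece $A_2$ is not in the socle either; hence the socle equals $A_3$ and is one-dimensional, i.e.\ $A$ itself would be Gorenstein (a Poincar\'e-duality algebra with socle in degree $3$), and duality forces the Hilbert function to be symmetric --- but $(1,2,1,1)$ is not. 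Equivalently, and more pedestrian: $(u,v)\in\ker E$ if and only if $(ux_1+vx_2)R_1\subseteq I_2$, so nondegeneracy gives two coprime quadrics spanning $I_2$, a regular sequence, whence $R/(I_2)$ has Hilbert series $(1+t)^2$ and $\gamma_5(G)\cong (R/I)_3$ is a quotient of $(R/(I_2))_3=0$. Either supplement closes the gap; without one of them the decisive implication is unproved. Two smaller glosses: Lemma \ref{discriminant} concerns nontrivial zeros of quadratic forms and is not what gives $\dim\ker E=1$ (you need only $E\ne 0$, which holds because $\gamma_4(G)\ne\gamma_5(G)$, together with $\det E=0$); and the assertion $|\Z_2(G)|=p^3$ requires first checking $\Z_2(G)\le\gamma_2(G)$ and $\Z(G)\le\gamma_3(G)$, after which your second-socle bookkeeping does go through.
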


\begin{proof}
By the given hypothesis  $G$ is minimally generated by $2$ elements,  say $\alpha_1, \alpha_2$, and $|\gamma_5(G)| \le p^2$.   Contrarily assume that $|\gamma_5(G)| = p^2$. Let $\beta$, $\beta_i$, $\eta_{ij}$ and $\xi_{ijk}$ are as defined in the introduction.  Without loss of generality we can assume that  $\gamma_3(G) = \gen{\beta_1,    \gamma_4(G)}$ and $\beta_2 \in \gamma_4(G)$.  Thus $\eta_{21}, \eta_{22} \in \gamma_5(G)$.  Now  Hall-Witt Identity
$$[\beta,  \alpha_1^{-1},  \alpha_2]^{\alpha_1} [\alpha_1,  \alpha_2^{-1},  \beta]^{\alpha_2} [\alpha_2,  \beta^{-1},  \alpha_1]^{\beta}= 1$$
gives $\eta_{12}= \eta_{21} \xi_{112} \xi_{121}^{-1}$.  Hence $\eta_{12} \in \gamma_5(G)$,  and therefore $\gamma_4(G)= \gen{\eta_{11}, \gamma_5(G)}$.  It is now clear that $\gamma_5(G)= \gen{\xi_{111},  \xi_{112}} $.   Again using Hall-Witt identity
\begin{center}
$[\beta_1,  {\alpha_2}^{-1},  \alpha_1]^{\alpha_2} [\alpha_2,  {\alpha_1}^{-1},  \beta_1]^{\alpha_1} [\alpha_1,  {\beta_1}^{-1},  \alpha_2]^{\beta_1}= 1,$
\end{center}
we get  $\xi_{112}=\xi_{121}$. Since  $\xi_{121} = 1$,  $\gamma_5(G)=\gen{\xi_{111}}$,  which is absurd. Hence $|\gamma_5(G)| = p$, and assertion (i) holds.

It now follows from \cite[Theorem 2.12]{NB58} that $|\gamma_4(G)/\gamma_5(G)| = p$, and therefore $|\gamma_3(G)/\gamma_4(G)| = p^2$.  By the given hypothesis,  it follows that $|\Z(G)| \le p^2$. If $|\Z(G)| = p^2$, then  $|\Z_2(G)| = p^3$ and $\Z(G/\gamma_5(G)) = (\gamma_4(G)/\gamma_5(G))(\Z(G)/\gamma_5(G))$ is of order $p^2$, and assertion (ii) holds in this case. So assume that $|\Z(G)| = p$. Since $\gamma_2(G)$ is elementary abelian,   for any $x,y \in G$,  we have
\begin{equation}\label{7}
[x,y^p]=[x,y]^p [x,y,y] ^{\binom p2} [x,y,y,y]^{\binom p3}[x,y,y,y,y]^{\binom p4}=1,
\end{equation}
which gives $G^p \le \Z(G)$.    Thus  $\bar{G} := G/\gamma_5(G)$ is of order $p^6$, exponent $p$ and nilpotency class  $4$  with $\gamma_2(\bar{G})$ elementary abelian of order $p^4$.    It then follows from  \cite{rJ80} that  $\bar{G}$ lies in one of the isoclinism classes $\Phi(23),  \Phi(40)$ and $\Phi(41)$.   We claim that $\bar{G}$  lies in $\Phi(23)$. 

If $\bar{G} \in \Phi(40)$, then, since $\exp(\bar G) = p$ and isoclinic groups have same commutator structure, we may assume that  
\begin{eqnarray*}
\bar{G} &=&  \langle \bar{\alpha_1},  \bar{\alpha_2},  \bar{\beta},  \bar{\beta_1},  \bar{\beta_2},  \bar{\gamma} \mid [\bar{\alpha_1},  \bar{\alpha_2}] = \bar{\beta},  [\bar{\beta},  \bar{\alpha_i}]= \bar{\beta_i}, [\bar{\beta_1}, \bar{\alpha_2}]= [\bar{\beta_2}, \bar{\alpha_1}]=\bar{\gamma}, \\
& & \;\  \bar{\alpha_i}^p=   \bar{\beta}^p =\bar{\beta_i}^p  = \bar{\gamma}^p=1; i=1,2 \rangle, 
\end{eqnarray*}
 Since $[\bar{\beta_i}, \bar{\alpha_i}]= 1_{\bar{G}}$,  we get $[\beta_i, \alpha_i] \in \gamma_5(G)$ for $i=1, 2$.  Hence $\gamma_4(G)=\gen{ [\beta_1, \alpha_2], \gamma_5(G)}$  and $[\beta_2,  \alpha_1] = [\beta_1,  \alpha_2] h$,  for some $h \in \Z(G)$.     By Hall-Witt Identity 
\begin{center}
$[\beta_1,  {\alpha_2}^{-1},  \alpha_1]^{\alpha_2} [\alpha_2,  {\alpha_1}^{-1},  \beta_1]^{\alpha_1} [\alpha_1,  {\beta_1}^{-1},  \alpha_2]^{\beta_1}= 1,$
\end{center}
we get $[\beta_1,  \alpha_2,  \alpha_1]=1$.  Again using  
\begin{center}
$[\beta_2,  {\alpha_1}^{-1},  \alpha_2]^{\alpha_1} [\alpha_1,  {\alpha_2}^{-1},  \beta_2]^{\alpha_2} [\alpha_2,  {\beta_2}^{-1},  \alpha_1]^{\beta_2}= 1,$
\end{center}
we get $[\beta_2,  \alpha_1,  \alpha_2]=1$.  Hence $G = \C_G(\gamma_4(G))$,  which is absurd. 

If $\bar{G} \in \Phi(41)$, then using the presentation
\begin{eqnarray*}
\bar{G} &=&  \langle \bar{\alpha}_1,  \bar{\alpha}_2,  \bar{\beta},  \bar{\beta}_1,  \bar{\beta}_2,  \bar{\gamma} \mid [\bar{\alpha}_1,  \bar{\alpha}_2] = \bar{\beta},  [\bar{\beta},  \bar{\alpha}_i]= \bar{\beta}_i, [\bar{\alpha}_1, \bar{\beta}_1]^{-\nu}= [\bar{\alpha}_2, \bar{\beta}_2]=\bar{\gamma}^{-\nu}, \\
& & \;\  \bar{\alpha}_i^p=   \bar{\beta}^p =\bar{\beta}_i^p  =\bar{\gamma}^p=1; i=1,2 \rangle, 
\end{eqnarray*}
one gets the same absurd conclusion as in the preceding case.  Hence $\bar{G} \in \Phi(23)$ and therefore  $|Z(\bar{G})|=p^2$ by \cite{rJ80}, which proves assertion (ii).  

 Assertion (iii) now follows from assertion (ii), Lemma \ref{p6} and Lemma \ref{quotient}. The proof of assertion (iv) is exactly same as the  proof of assertion (iv) of Lemma \ref{cls5} for abelian $\gamma_2(G)$. The proof is complete.  \hfill  $\Box$

\end{proof}

\begin{prop}\label{nab}
Let $G$ be a group of order $p^7$ and nilpotency class $5$ with $\gamma_2(G)$  non-abelian of order $p^5$ and  exponent $p$.   Then $\K(G) \neq \gamma_2(G)$ if and only if $|\gamma_5(G)| = p$  and  $|\Z_2(G)| = p^3$. Moreover, the commutator length of $G$ is at most $2$.
\end{prop}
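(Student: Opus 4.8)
The plan is to re-run the machinery of Proposition \ref{eabcls5} and Lemma \ref{cls5}, now tracking the extra information carried by $\gamma_2(G)$ being non-abelian. As there, $G$ is $2$-generated, say by $\alpha_1,\alpha_2$ with $\beta=[\alpha_1,\alpha_2]$, and $|\gamma_2(G)/\gamma_3(G)|=p$. Since $[\gamma_3(G),\gamma_3(G)]\le\gamma_6(G)=1$ and $[\beta,\gamma_4(G)]\le\gamma_6(G)=1$, the derived subgroup of $\gamma_2(G)$ is $[\gamma_2(G),\gamma_2(G)]=\langle[\beta,\beta_1],[\beta,\beta_2]\rangle\le\gamma_5(G)$; as $\gamma_2(G)$ is non-abelian this is nontrivial, so $|\gamma_5(G)|\in\{p,p^2\}$. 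The first step is therefore to split on the value of $|\gamma_5(G)|$, and, inside $|\gamma_5(G)|=p$, on the possible values of $|\Z(G)|$ and $|\Z_2(G)|$.

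For the sufficiency direction I would assume $|\gamma_5(G)|=p$ and $|\Z_2(G)|=p^3$. Then $[\gamma_2(G),\gamma_2(G)]=\gamma_5(G)$ has order $p$, so $\bar G:=G/\gamma_5(G)$ has order $p^6$, nilpotency class $4$, and $\gamma_2(\bar G)=\gamma_2(G)/\gamma_5(G)$ is now \emph{abelian}, elementary of order $p^4$. Exactly as in Proposition \ref{eabcls5}, Blackburn's \cite[Theorem 2.12]{NB58} gives $|\gamma_4(G)/\gamma_5(G)|=p$ and $|\gamma_3(G)/\gamma_4(G)|=p^2$, whence $|\Z(G)|\le p^2$. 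Splitting on $|\Z(G)|\in\{p,p^2\}$, and in the case $|\Z(G)|=p$ ruling out the isoclinism types $\Phi(40)$ and $\Phi(41)$ by the same Hall--Witt computations used there, one obtains $|\Z(\bar G)|=p^2$; note that when $\Z(G)=\gamma_5(G)$ this reads $|\Z(\bar G)|=|\Z_2(G)/\Z(G)|=|\Z_2(G)|/p=p^2$. Thus $\bar G$ is a $\Phi(23)$-group, Lemma \ref{p6} gives $\K(\bar G)\ne\gamma_2(\bar G)$, and Lemma \ref{quotient} lifts this to $\K(G)\ne\gamma_2(G)$.

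For necessity I would argue the contrapositive, establishing $\K(G)=\gamma_2(G)$ in the two complementary cases. If $|\gamma_5(G)|=p^2$, then $G/\gamma_5(G)$ has order $p^5$, so $\K(G/\gamma_5(G))=\gamma_2(G/\gamma_5(G))$ by \cite{KM05}; applying the covering criterion Lemma \ref{prelemma2} with $H=\gamma_5(G)\le\gamma_2(G)\cap\Z(G)$ reduces the problem to exhibiting covering elements $x_1,x_2$ with $\gamma_5(G)\subseteq[x_1,G]\cap[x_2,G]$. If instead $|\gamma_5(G)|=p$ but $|\Z_2(G)|\ne p^3$, the analysis of the previous paragraph shows $|\Z(\bar G)|\ne p^2$, so $\bar G$ is not a $\Phi(23)$-group and $\K(\bar G)=\gamma_2(\bar G)$ by the $p^6$ classification of \cite{MR}; again Lemma \ref{prelemma2} with $H=\gamma_5(G)$ promotes this to $\K(G)=\gamma_2(G)$ once $\gamma_5(G)\subseteq[x_i,G]$ is verified for a covering family. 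In both cases the substance is the explicit commutator bookkeeping with $\beta,\beta_i,\eta_{ij},\xi_{ijk}$ needed to check the containment condition of Lemma \ref{prelemma2}.

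Finally, the commutator length follows as in Lemma \ref{cls5}(iv): when $\K(G)=\gamma_2(G)$ it is $1$, and when $\K(G)\ne\gamma_2(G)$ one picks a normal subgroup $N\le\K(G)$ with $\K(G/N)=\gamma_2(G/N)$ and invokes Lemma \ref{cmlength}; concretely I would split on whether $\C_G(\beta_1)$ or $\C_G(\beta_2)$ is a maximal subgroup, taking $N=\gamma_4(G)$ or a suitable $[\alpha_i,\gamma_2(G)]$ of order at least $p^2$, so that $|G/N|\le p^5$ and \cite{KM05} applies. The main obstacle I anticipate is the necessity direction, and in particular the sub-case $|\gamma_5(G)|=p^2$: here the non-vanishing of $[\beta,\beta_1]$ and $[\beta,\beta_2]$ destroys the Hall--Witt simplifications available in the abelian setting, so one must determine the $G$-module structure of $\gamma_5(G)$ and the relevant centralizers precisely enough to build a covering family realizing all of $\gamma_5(G)$ inside the commutator sets $[x_i,G]$.
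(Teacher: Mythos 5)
Your skeleton matches the paper's (sufficiency via Lemma \ref{p6} and Lemma \ref{quotient}, necessity via Lemma \ref{prelemma2} in the two complementary cases, commutator length via Lemma \ref{cmlength}), and your sufficiency argument is essentially correct and essentially the paper's: once $|\gamma_5(G)|=p$ and $|\Z_2(G)|=p^3$, one has $|\Z(G/\gamma_5(G))|=p^2$ directly, so your detour through ruling out $\Phi(40)$ and $\Phi(41)$ is unnecessary (when $\Z(G)=\gamma_5(G)$ the hypothesis already gives $\Z(G/\gamma_5(G))=\Z_2(G)/\Z(G)$ of order $p^2$). The genuine gap is the necessity direction: what you offer there is a reduction, not a proof, and the step you defer --- ``exhibiting covering elements $x_i$ with $\gamma_5(G)\subseteq[x_i,G]$'' --- is exactly where the entire content of the paper's argument lies. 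Knowing $\K(G/H)=\gamma_2(G/H)$ abstractly (from \cite{KM05} or \cite{MR}) does not feed Lemma \ref{prelemma2}: that lemma requires a concrete family $x_1,\dots,x_n$ which simultaneously covers $\gamma_2(G)/H$ by the sets $[x_iH,G/H]$ and satisfies $H\subseteq\bigcap_i[x_i,G]$, and the trivial covering family (all elements) violates the intersection condition. In the case $|\gamma_5(G)|=|\Z(G)|=p$, $|\Z_2(G)|=p^2$, the paper builds this family by writing down the $\Phi(40)$/$\Phi(41)$ commutator relations of $L=G/\gamma_5(G)$, proving by an explicit solution of a polynomial system over $\mathbb{F}_p$ that the family $\alpha_1^{\epsilon}\beta^{a_1}\beta_1^{a_2}\beta_2^{a_3}$ covers $\gamma_2(L)$, then normalizing so that $[\beta,\beta_1]\neq1$, $[\beta,\beta_2]=1$ and using Hall--Witt identities to show $\gamma_5(G)=\gen{[\beta,\beta_1]}$ lies in every $[x,G]$ of that family. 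None of this is in your proposal; you yourself flag it as ``the main obstacle''.

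In the case $|\gamma_5(G)|=p^2$ your proposed reduction is moreover shakier than the paper's, not just incomplete. You quotient by the whole of $\gamma_5(G)$, which forces the containment condition to hold for a subgroup of order $p^2$; you give no candidate family, and it is not clear such a family exists. The paper instead quotients by the order-$p$ subgroup $H=\gen{\xi_{112}}=\gen{[\beta_1,\beta]}$, observes that $G/H$ is then of maximal class of order $p^6$ with $\bar{\alpha}_1$ a uniform element, so that Theorem \ref{max} yields the covering family $\bar{\alpha}_2^a\bar{\beta}^b\bar{\beta}_1^c\bar{\eta}_{11}^d$, and then only the order-$p$ containment $H\subseteq[x,G]$ must be checked, which follows quickly from the identity $\xi_{112}=[\beta_1,\beta]=[\eta_{11},\alpha_2]$ by commutating $x$ against $\eta_{11}$, $\beta$, $\beta_1$ or $\alpha_2$ according to which exponent is nonzero. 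The choice of a rank-one $H$ inside $\gamma_5(G)$, together with the Hall--Witt bookkeeping identifying $\xi_{112}$ as the right generator, is the missing idea that makes the verification feasible. As it stands, your proposal establishes sufficiency and the commutator-length bound, but not necessity.
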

\begin{proof}
By the given hypothesis it follows that $|\gamma_5(G)| \le |\Z(G)| \le p^2$ and the commutator subgroup of $\gamma_2(G)$ is contained in $\gamma_5(G)$. First assume that $|\gamma_5(G)| = p$ and  $|\Z_2(G)| = p^3$. It then follows that $|\Z(G/\gamma_5(G))| = p^2$, and therefore  $ \K(G) \neq \gamma_2(G)$ by Lemma  \ref{p6} and  Lemma \ref{quotient}.   Conversely, assume that either $|\gamma_5(G)| = p^2$ or $|\gamma_5(G)| = |\Z(G)| = p$ and  $|\Z_2(G)| = p^2$.  Notice that $|\Z(G/\gamma_5(G))|=p$ in the second situation.
Let $G = \gen{\alpha_1, \alpha_2}$ and $\beta$,  $\beta_i$,  $\eta_{ij}$ and $\xi_{ijk}$,  where $i,j,k \in \{1,2\}$, be as defined in the introduction.

 Let us first assume that $|\gamma_5(G)| = |\Z(G)| = p^2$.    Notice that $|\gamma_i(G)/\gamma_{i+1}(G)| = p$ for $2 \le i \le 4$.  Without loss of generality we can assume that  $\beta_2 \in \gamma_4(G)$ and $\gamma_3(G)= \gen{\beta_1, \gamma_4(G)}$.   Therefore $\eta_{21},  \eta_{22} \in \gamma_5(G)$.  But by Hall-Witt identity 
\begin{center}
$[\beta,  \alpha_1^{-1},  \alpha_2]^{\alpha_1} [\alpha_1,  \alpha_2^{-1},  \beta]^{\alpha_2} [\alpha_2,  \beta^{-1},  \alpha_1]^{\beta}= 1,$
\end{center} 
we get  $\eta_{12}=  \xi_{112} {\xi_{121}}^{-1} {\eta_{21}}$;  thus $\eta_{12} \in \gamma_5(G)$.  Hence $\gamma_4(G)= \gen{\eta_{11}} \pmod {\gamma_5(G)}$ and therefore $\gamma_5(G)=\gen{ \xi_{111}, \xi_{112}}$.  Again the identity
\begin{center}
$[\beta_1,  {\alpha_2}^{-1},  \alpha_1]^{\alpha_2} [\alpha_2,  {\alpha_1}^{-1},  \beta_1]^{\alpha_1} [\alpha_1,  {\beta_1}^{-1},  \alpha_2]^{\beta_1}= 1,$
\end{center}
 gives $[\beta_1,  \beta] = \xi_{112}$. 
Let $\beta_2= {\eta_{11}}^r {\xi_{111}}^s {\xi_{112}}^t$ for some $r,s,t \in \mathbb{F}_p$.  Let $H=\gen{\xi_{112}}$ and $\bar{G} := G/H$.  Then $\bar{G}$ is a $p$-group of maximal class of order $p^6$ with  $\gamma_2({\bar{G}})$  elementary abelian of order $p^4$. Notice that $\bar{\alpha}_1$ is an uniform element of $\bar G$. It is now easy to see that
 $$\gamma_2(\bar{G}) = \{[ \bar{\alpha}_2^a \bar{\beta}^b \bar{\beta}_1^c \bar{\eta}_{11}^d , \bar{\alpha}_1] \mid a,b,c,d \in \mathbb{F}_p\} = \{[\bar{\alpha}_2^a \bar{\beta}^b \bar{\beta}_1^c \bar{\eta}_{11}^d, \bar G] \mid a,b,c,d \in \mathbb{F}_p\}.$$
   Since $\xi_{112}=[\beta_1, \beta]= [\eta_{11}, \alpha_2]$,   it also follows that 
 $$ H \subseteq   \bigcap \limits_{\substack{a , b,  c,  d }}[ \alpha_2^a \beta^b \beta_1^c \eta_{11}^d , G],$$ where $a, b, c, d \in \mathbb{F}_p$ not all simultaneously zero. Hence by Lemma \ref{prelemma2} we have $\K(G)=\gamma_2(G)$.

Now assume that $|\gamma_5(G)| = |\Z(G)| = p$ and  $|\Z(G/\gamma_5(G))|=p$.    Let $ H := Z(G)$ $(= \gamma_5(G))$ and $L := G/H$.  Then $L$ is of order $p^6$ and nilpotency class $4$ such that  $\gamma_2(L)$ is elementary abelian of order $p^4$ and $|\Z(L)|=p$. Hence, by \cite{rJ80}, $L$ lies in  the isoclinism class $\Phi(40)$ or  $\Phi(41)$.  By  \cite[Lemma 4.1]{MR}  we know that $\K(L)= \gamma_2(L)$.  We provide a detailed explanation when $L$ lies in $\Phi_{40}$. The other case goes on the same lines, and therefore is left for the reader.  Notice that any two isoclinic groups admit the same commutator relations. So, if  $L$ lies in $\Phi(40)$, then by  \cite{rJ80} $L$ has the following commutator relations:
$$R := \{[ \bar{\alpha}_1, \bar{\alpha}_2]= \bar{\beta},  [\bar{\beta}, \bar{\alpha}_i]= \bar{\beta}_i, [\bar{\beta}_1, \bar{\alpha}_2]=[\bar{ \beta}_2, \bar{\alpha}_1]= \bar{\eta}\},$$
where $\gamma_4(G)/\gamma_5(G) = \gen{\eta \gamma_5(G)}$. It is  not difficult to see that $\bar{\alpha}_i^p = 1$ for $i = 1, 2$.

We claim that $\gamma_2(L)=\{ [{\bar{\alpha}_1}^{\epsilon} \bar{\beta}^{a_1} {\bar{\beta}_1}^{a_2} {\bar{\beta}_2}^{a_3},  L] \mid a_1, a_2 , a_3 \in \mathbb{F}_p ,   \epsilon \in \{0,1\} \}$,  where $a_1$ and $a_2$ are not simultaneously zero. To establish this claim, we are going to show that for given  $u,v,w,x \in \mathbb{F}_p$,  there exist $\epsilon,  a_i, b_j \in \mathbb{F}_p$,  where $1 \leq i \leq 3$, $1 \leq j \leq 5$ and   $a_1,  a_2$  not simultaneously zero, such that 
$$[{\bar{\alpha}_1}^{\epsilon} \bar{\beta}^{a_1} {\bar{\beta}_1}^{a_2} {\bar{\beta}_2}^{a_3},   {\bar{\alpha}_1}^{b_1} {\bar{\alpha}_2}^{b_2} \bar{\beta}^{b_3} {\bar{\beta}_1}^{b_4} \bar{\beta}_2^{b_5} ]  =  \bar{\beta}^{u}\bar{\beta}_1^{v}\bar{\beta}_2^{w} \bar{\eta}^{x}.$$
After expanding the left hand side and comparing the powers on both sides, we get 
\begin{eqnarray*}
\epsilon b_2 =u,\\
a_1 b_1 - \epsilon b_3     =v,\\
a_1 b_2  + \epsilon  {\binom {b_2} 2}    =w,\\
a_1 b_1 b_2 + a_2 b_2 + a_3 b_1 -\epsilon  b_5  =x.
\end{eqnarray*}

If $u \neq 0$, then taking $\epsilon =1$, $a_2 \in \mathbb{F}_p^*$ and $b_1=0$,  we get $b_2 =u$,  $ b_3    =-v$, $a_1 = (w- {\binom{u}2})u^{-1}$,   and $b_5=  ua_2  - x$.  Remaining $a_i$'s and $b_i$'s can take arbitrary values in $\mathbb{F}_p$.  Now let $u=0$.  Then taking $\epsilon =0$, the above system of equations reduces to
\begin{eqnarray}
a_1 b_1      =v, \label{30} \\
   a_1 b_2     =w, \label{31} \\
a_1 b_1 b_2 + a_2 b_2 + a_3 b_1   =x. \label{32}
\end{eqnarray}
If $v=w=0$,  then taking $a_1=a_3=0$ and $a_2 \neq 0$,  we get  $ b_2    =x a_2^{-1}$.  If $v\neq 0$ and $w \neq 0$,  then we can choose $b_1, b_2 \in \mathbb{F}_p^*$ such that   $a_1= v b_1^{-1}= w b_2^{-1} $.  Now taking $a_3=0$ in \eqref{32},  we get $  a_2     =b_2^{-1}(x-a_1 b_1 b_2) $.  If $v=0$ and $w \neq 0$,  then  by equations \eqref{31}  and \eqref{30}, respectively, we get  $a_1= w b_2^{-1}$ and $b_1=0 $,     and by \eqref{32} we get $a_2 = xb_2^{-1}$.  If $w=0$ and $v \neq 0$,  then  by equations \eqref{30} and \eqref{31}, respectively,  we get $a_1= v b_1^{-1}$ and $b_2=0 $,     and by \eqref{32} we get $a_3 = xb_1^{-1}$.  Thus in all the cases we get the required $\epsilon$, $a_i$'s and $b_i$'s, and therefore it follows that
 $$ \gamma_2(L) =   \bigcup \limits_{\substack{\epsilon , a_1,  a_2,  a_3 }}[ \alpha_1^{\epsilon} \beta^{a_1} \beta_1^{a_2} \beta_2^{a_3},  L],$$
 where $a_1$ and $a_2$ are not simultaneously zero.

As $\gamma_2(G)$ is non-abelian,   we can assume that $[\beta, \beta_1] \neq 1$ and $[\beta, \beta_2]=1$.  For, if both are non-zero, then $[\beta, \beta_1]= [\beta, \beta_2]^t$ for some $t \in \mathbb{F}_p^*$.  Then modifying $\beta_2$ by $\beta_1 \beta_2^{-t}$,  we get $[\beta, \beta_2]=1$. If  $[\beta, \beta_1] =1$, then interchanging the role of $\alpha_1$ and $\alpha_2$ works.   By Hall-Witt Identity 
\begin{center}
$[\beta_1,  {\alpha_1}^{-1},  \alpha_2]^{\alpha_1} [\alpha_1,  {\alpha_2}^{-1},  \beta_1]^{\alpha_2} [\alpha_2,  {\beta_1}^{-1},  \alpha_1]^{\beta_1}= 1,$
\end{center}
we get  $\xi_{121}= [\beta, \beta_1]$.  Again by Hall-Witt Identity
\begin{center}
$[\beta_2,  {\alpha_1}^{-1},  \alpha_2]^{\alpha_1} [\alpha_1,  {\alpha_2}^{-1},  \beta_2]^{\alpha_2} [\alpha_2,  {\beta_2}^{-1},  \alpha_1]^{\beta_2}= 1,$
\end{center}
we get  $\xi_{212}=1$. Thus $\alpha_2 \in C_G(\gamma_4(G))$.  Therefore we get $\gamma_5(G)= \gen{\xi_{121}}$.   Now, since $\gamma_5(G) =\gen{[\alpha_1, \gamma_4(G) ]}= \gen{[\beta, \beta_1]}$, it follows that    
$$ \gamma_5(G) \subseteq   \bigcap \limits_{\substack{\epsilon , a_1,  a_2,  a_3 }}[ \alpha_1^{\epsilon} \beta^{a_1} \beta_1^{a_2} \beta_2^{a_3}, G],$$
 where  $a_1$ and $a_2$ are not both zero.  Hence, again by Lemma \ref{prelemma2}, $\K(G)=\gamma_2(G)$.
 
 If $L$ lies in $\Phi(41)$, then, on the same lines, one can show that
$$\gamma_2(G) = \{ [\alpha_2^{\epsilon} \beta^{a_1} \beta_1^{a_2} \beta_2^{a_3},  G] \mid a_1,  a_2, a_3 \in \mathbb{F}_p ,   \epsilon \in \{0,1\} \}.$$ 
That the commutator length of $G$ is at most $2$ follows the same way as proved in  Lemma \ref{cls5}. The proof is now complete.     \hfill $\Box$

\end{proof}


\section{Groups of nilpotency class $4$}	

Let $G$ be a group of nilpotency class $4$ and order $p^7$ with $\gamma_2(G)$  of order $p^5$ and minimally generated by at least $4$ elements. By Lemma \ref{nab1} we know that $\gamma_2(G)$ is abelian. It now follows from Lemma \ref{clsatlst5} that $\exp(\gamma_2(G))$ for such groups $G$ can not be $p^2$. So the only possibility is  elementary abelian $\gamma_2(G)$, which we'll deal in this section.

\begin{lemma}\label{cls4}
Let $G$ be a group of order $p^7$ and   nilpotency class $4$ with $\gamma_2(G)$  elementary abelian of order $p^5$. Then $\Z(G) = \gamma_4(G)$ is of order $p^2$.
\end{lemma}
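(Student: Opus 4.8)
The plan is to first nail down the lower central series of $G$ and then locate the centre inside it. Since $|G/\gamma_2(G)| = p^2$ and $\gamma_2(G) \le \Phi(G)$, we have $\Phi(G) = \gamma_2(G)$, so $G$ is minimally generated by two elements $\alpha_1, \alpha_2$; write $\beta, \beta_i, \eta_{ij}$ as in the introduction. Being $2$-generated, $\gamma_2(G)/\gamma_3(G) = \gen{\beta\gamma_3(G)}$ has order $p$, so $|\gamma_3(G)| = p^4$; moreover $\gamma_3(G)/\gamma_4(G)$ is generated by $\beta_1\gamma_4(G)$ and $\beta_2\gamma_4(G)$, hence has order at most $p^2$. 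Thus the pair $(|\gamma_3(G)/\gamma_4(G)|, |\gamma_4(G)|)$ is either $(p^2, p^2)$ or $(p, p^3)$. I would rule out the second possibility as follows: if $\gamma_3(G)/\gamma_4(G)$ were cyclic, say generated by $c\gamma_4(G)$, then since $\gamma_5(G) = 1$ one has $\gamma_4(G) = [\gamma_3(G), G] = \gen{[c,\alpha_1], [c,\alpha_2]}$, which has order at most $p^2$, contradicting $|\gamma_4(G)| = p^3$. Hence $|\gamma_3(G)/\gamma_4(G)| = |\gamma_4(G)| = p^2$.

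Next I would trap $\Z(G)$ between $\gamma_4(G)$ and $\gamma_3(G)$. As the class is $4$, $[\gamma_4(G), G] = \gamma_5(G) = 1$, so $\gamma_4(G) \le \Z(G)$. For the upper bound, first note $\Z(G) \le \gamma_2(G)$: an element of $\Z(G) \setminus \gamma_2(G) = \Z(G)\setminus\Phi(G)$ could be taken as part of a generating pair, and a group generated by a central element together with one further element is abelian, which is absurd. Then $\Z(G) \le \gamma_3(G)$: if $z \in \Z(G) \cap (\gamma_2(G)\setminus\gamma_3(G))$ then $z \equiv \beta^k \pmod{\gamma_3(G)}$ with $k \not\equiv 0$, whence $1 = [z,\alpha_i] \equiv \beta_i^{k} \pmod{\gamma_4(G)}$ forces $\beta_1, \beta_2 \in \gamma_4(G)$, contradicting $|\gamma_3(G)/\gamma_4(G)| = p^2$.

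It then remains to prove that $\gamma_3(G)$ contains no central element outside $\gamma_4(G)$; this is the crux. Because $[\gamma_3(G), \gamma_2(G)] \le \gamma_5(G) = 1$ and $[\gamma_4(G), G] = 1$, commutation induces a well-defined bilinear map $B \colon \gamma_3(G)/\gamma_4(G) \times G/\gamma_2(G) \to \gamma_4(G)$ with $B(\beta_i, \alpha_j) = \eta_{ij}$. The Hall--Witt identity, with every $\xi_{ijk} \in \gamma_5(G) = 1$, gives $\eta_{12} = \eta_{21}$, so under the identification $\beta_i \leftrightarrow \alpha_i$ the map $B$ is a \emph{symmetric} $\mathbb{F}_p$-bilinear map of a $2$-dimensional space into the $2$-dimensional space $\gamma_4(G)$. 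A central element of $\gamma_3(G)$ taken modulo $\gamma_4(G)$ is precisely a vector in the radical of $B$, so I must show this radical is trivial. Writing $B$ in coordinates as a pair $(M_1, M_2)$ of symmetric $2\times 2$ matrices, a nonzero radical vector $v$ would satisfy $M_1 v = M_2 v = 0$; completing $v$ to a basis and using $M_i = M_i^{T}$ forces each $M_i$ to have vanishing first row and column, so both $M_i$ become proportional to $\mathrm{diag}(0,1)$ and hence linearly dependent. But $\eta_{11}, \eta_{12}, \eta_{22}$ generate $\gamma_4(G)$, which is exactly the assertion that $M_1, M_2$ are linearly independent, a contradiction. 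Therefore the radical is trivial, $\Z(G) \cap \gamma_3(G) = \gamma_4(G)$, and combined with $\gamma_4(G) \le \Z(G) \le \gamma_3(G)$ this gives $\Z(G) = \gamma_4(G)$, of order $p^2$.

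The main obstacle is this last step: encoding centrality in $\gamma_3(G)$ as the radical of a bilinear map, and then exploiting the symmetry $\eta_{12} = \eta_{21}$ to convert a hypothetical radical vector into a linear dependence among the generators $\eta_{ij}$ of $\gamma_4(G)$. Pleasantly, the linear-algebra argument is characteristic-free, so no case distinction on $p$ is forced here; the only real care needed is the bookkeeping of which higher commutators survive modulo $\gamma_5(G) = 1$.
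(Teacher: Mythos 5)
Your proof is correct and follows essentially the same route as the paper: both reduce to the $2$-generated case, establish $|\gamma_3(G)/\gamma_4(G)| = |\gamma_4(G)| = p^2$, extract the symmetry $\eta_{12} = \eta_{21}$ from the Hall--Witt identity, and then derive a contradiction by showing that a central element in $\gamma_3(G)\setminus\gamma_4(G)$ would force all the $\eta_{ij}$ into a cyclic subgroup, against $|\gamma_4(G)| = p^2$. The differences are only presentational: you phrase the final step as triviality of the radical of a symmetric bilinear map (equivalently, linear independence of the two Gram matrices), where the paper directly manipulates the relations $\eta_{11}^i\eta_{21}^j = 1$ and $\eta_{12}^i\eta_{22}^j = 1$, and you spell out the containments $\gamma_4(G) \le \Z(G) \le \gamma_3(G)$ that the paper leaves implicit.
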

\begin{proof}
We can assume $G=\gen{\alpha_1, \alpha_2}$.  Let $\beta$, $\beta_i$ and  $\eta_{ij}$ are as defined in  the introduction.   Then   $\gamma_2(G)/\gamma_3(G)=\gen{\beta \gamma_3(G)}$. Obviously  $|\gamma_4(G)| \le p^3$. If $|\gamma_4(G)| = p^3$, then $\gamma_3(G) = \gen{\gamma, \gamma_4(G)}$ for any $\gamma \in \gamma_3(G) - \gamma_4(G)$. This implies that $\gamma_4(G) =\gen{[\gamma, \alpha_1], [\gamma, \alpha_2]}$, which is not possible. Hence the order of $\gamma_4(G)$ is at most  $p^2$. It is easy to see that it is precisely $p^2$,  and therefore  $\gamma_3(G)/\gamma_4(G)=\gen{\beta_1 \gamma_4(G),  \beta_2\gamma_4(G)}$ is of order $p^2$.    Using Hall-Witt identity  
\begin{center}
$[\beta,  \alpha_1^{-1}, \alpha_2]^{\alpha_1}[\alpha_1, \alpha_2^{-1}, \beta]^{\alpha_2}[\alpha_2, \beta^{-1}, \alpha_1]^{\beta}=1,$
\end{center} 
we have $\eta_{12}= \eta_{21}$. 

Contrarily assume that $|\Z(G)| = p^3$.   Since $|\gamma_4(G)| = p^2$,  we have  $Z(G)= \gen{\beta_1^i \beta_2^j,  \gamma_4(G) }$ for some $i, j \in \mathbb{F}_p$  not both zero.  If $i=0$,  then $\eta_{22}= \eta_{21}=1$,  which implies that $\gamma_4(G) =\gen{\eta_{11}}$; not possible.  So let  $i \in \mathbb{F}_p^*$.  As $1= [\beta_1^i \beta_2^j,  \alpha_1]= \eta_{11}^i \eta_{21}^j$ and $1=[\beta_1^i \beta_2^j, \alpha_2]=  \eta_{12}^i \eta_{22}^j$,  we get $\eta_{11}= {\eta_{12}}^{-ji^{-1}} =  {\eta_{22}}^{j^2i^{-2}}$,  which implies that the order $\gamma_4(G)$ is at most $p$; not possible. The proof is now complete. \hfill $\Box$

\end{proof}

Let $G$ satisfy the hypotheses of Lemma \ref{cls4}. Then $\Z(G) = \gamma_4(G)$ is of order $p^2$. Assume that $\gamma_4(G)=\gen{\eta_{11}, \eta_{12}}$ and $\eta_{22} =   \eta_{11}^m \eta_{12}^n$,  where $m, n \in \mathbb{F}_p$ not both zero. With this setting we have

\begin{lemma}\label{sbgrpH}
Let $G$, $m$ and $n$ be as in the preceding para. Then $G$ admits a subgroup $H \leq Z(G)$  of order $p$ such that $|Z(G/H)|=p^2$ if and only if  the equation $m \lambda^2 + n \lambda \mu - \mu^2=0$ has a solution $\lambda = \lambda_0$ ($\neq 0$) and $\mu = \mu_0$ in $\mathbb{F}_p$. 
\end{lemma}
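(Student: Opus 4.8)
The plan is to determine, for a subgroup $H \le \Z(G)$ of order $p$, which elements of $G$ become central modulo $H$, and then to encode the existence of a genuinely new central element as the solvability of a binary quadratic form over $\mathbb{F}_p$.

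First I would collect the structural data from Lemma \ref{cls4} and its proof: $\Z(G) = \gamma_4(G) = \gen{\eta_{11},\eta_{12}}$ is elementary abelian of order $p^2$, the quotient $\gamma_3(G)/\gamma_4(G)$ is generated by $\beta_1,\beta_2$ modulo $\gamma_4(G)$, one has $\eta_{21} = \eta_{12}$, and by hypothesis $\eta_{22} = \eta_{11}^m\eta_{12}^n$. Since $\gamma_4(G)$ is central, for every order-$p$ subgroup $H \le \gamma_4(G)$ the image of $\gamma_4(G)$ lies in $\Z(G/H)$ and has order $p$; thus $|\Z(G/H)| = p^2$ exactly when some element of $G$ lying outside $\gamma_4(G)$ becomes central modulo $H$. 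I would show such an element must lie in $\gamma_3(G)$: writing $x = \alpha_1^a\alpha_2^b g$ with $g \in \gamma_2(G)$, the conditions $[x,\alpha_1],[x,\alpha_2]\in H\subseteq\gamma_4(G)$ force, after reduction modulo $\gamma_3(G)$, that $\beta^{-b}\equiv\beta^{a}\equiv 1$, hence $a\equiv b\equiv 0$ and $x\in\gamma_2(G)$; one further reduction modulo $\gamma_4(G)$ kills the $\beta$-component, so $x\in\gamma_3(G)$. Hence the only candidates are $x\equiv\beta_1^i\beta_2^j\pmod{\gamma_4(G)}$ with $(i,j)\neq(0,0)$.

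Next I would compute, using that $\gamma_2(G)$ is elementary abelian and $\gamma_4(G)$ central,
\[
[\beta_1^i\beta_2^j,\alpha_1]=\eta_{11}^{i}\eta_{12}^{j},\qquad
[\beta_1^i\beta_2^j,\alpha_2]=\eta_{11}^{mj}\eta_{12}^{\,i+nj},
\]
invoking $\eta_{21}=\eta_{12}$ and $\eta_{22}=\eta_{11}^m\eta_{12}^n$. Identifying $\gamma_4(G)$ with $\mathbb{F}_p^2$ through $\eta_{11}^{x}\eta_{12}^{y}\leftrightarrow(x,y)$, these two commutators are the vectors $(i,j)$ and $(mj,\,i+nj)$. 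The element $\beta_1^i\beta_2^j$ becomes central modulo an order-$p$ subgroup $H$ iff both vectors lie in the line $H$; as $(i,j)\neq(0,0)$ at least one vector is nonzero, so a suitable $H$ exists iff the two vectors are linearly dependent, that is,
\[
\det\begin{pmatrix} i & j \\ mj & i+nj \end{pmatrix}=i^{2}+nij-mj^{2}=0.
\]
Thus $G$ admits a subgroup $H$ of the required type iff $g(\lambda,\mu):=\lambda^{2}+n\lambda\mu-m\mu^{2}$ has a nontrivial zero; a short count (the full preimage of the new centre is $\gen{\beta_1^i\beta_2^j}\gamma_4(G)$ of order $p^{3}$, and nothing outside $\gamma_3(G)$ contributes) confirms that in that case $|\Z(G/H)|$ equals $p^2$, not $p^3$.

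It remains to match $g$ with the form $f(\lambda,\mu)=m\lambda^{2}+n\lambda\mu-\mu^{2}$ in the statement, and this is exactly the content of Lemma \ref{quad} when $m,n\in\mathbb{F}_p^*$: $g$ has a nontrivial zero iff $f$ does. I expect the degenerate cases to be the main nuisance, since Lemma \ref{quad} assumes $m,n\neq 0$. If $m=0$ then $g(\lambda,\mu)=\lambda(\lambda+n\mu)$ and $f(\lambda,\mu)=\mu(n\lambda-\mu)$ both obviously have nontrivial zeros; if $n=0$ then $g(\lambda,\mu)=\lambda^{2}-m\mu^{2}$ and $f(\lambda,\mu)=m\lambda^{2}-\mu^{2}$ both have nontrivial zeros precisely when $m$ is a quadratic residue. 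Finally, since $f(0,\mu)=-\mu^{2}$, every nontrivial zero of $f$ has first coordinate nonzero, so ``$g$ has a nontrivial zero'' is equivalent to ``$m\lambda^2+n\lambda\mu-\mu^2=0$ has a solution with $\lambda_0\neq 0$,'' which is precisely the asserted condition. The principal obstacle is thus purely this passage between the form $g$ produced by the centraliser computation and the form $f$ recorded in the statement, together with the bookkeeping needed to ensure the centre has order exactly $p^2$.
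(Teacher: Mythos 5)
Your proposal is correct, and its core computation---the formulas $[\beta_1^i\beta_2^j,\alpha_1]=\eta_{11}^i\eta_{12}^j$ and $[\beta_1^i\beta_2^j,\alpha_2]=\eta_{11}^{mj}\eta_{12}^{i+nj}$ followed by a $2\times 2$ determinant condition---is the same as the paper's, but you organize it around a different pair of unknowns, and that changes which quadratic form appears. You take as variables the exponents $(i,j)$ of the prospective central element and express the existence of $H$ as linear dependence of the two commutator vectors in $\gamma_4(G)\cong\mathbb{F}_p^2$, which yields the form $g(\lambda,\mu)=\lambda^2+n\lambda\mu-m\mu^2$; you must then pass through Lemma \ref{quad} (with separate treatment of the cases $m=0$ and $n=0$, where that lemma does not apply) to reach the form $f(\lambda,\mu)=m\lambda^2+n\lambda\mu-\mu^2$ of the statement. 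The paper instead takes as variables the scalars $\lambda,\mu$ defined by $[\beta_1^i\beta_2^j,\alpha_1]=h^{\lambda}$ and $[\beta_1^i\beta_2^j,\alpha_2]=h^{\mu}$, where $h=\eta_{11}^a\eta_{12}^b$ generates $H$; eliminating $(i,j)$ leaves a linear system in $(a,b)$ whose nontrivial solvability forces its determinant, which is exactly $f(\lambda,\mu)$, to vanish, and the converse is run backwards from a kernel vector $(a_0,b_0)$ of the matrix $M$, giving $H=\gen{\eta_{11}^{a_0}\eta_{12}^{b_0}}$ with $\bar{\beta}_1^{a_0}\bar{\beta}_2^{b_0}$ central. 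Thus the paper never invokes Lemma \ref{quad} in this proof, reserving it for Theorem \ref{eabcls4}. Your route buys geometric transparency and makes explicit two points the paper treats as asides: that any element central modulo $H$ must lie in $\gamma_3(G)$, and that $|\Z(G/H)|$ is exactly $p^2$ rather than $p^3$ (the admissible exponent vectors form precisely the line $H$, so the preimage of $\Z(G/H)$ is $\gen{\beta_1^i\beta_2^j}\gamma_4(G)$ of order $p^3$). Its cost is the detour through Lemma \ref{quad} and the degenerate-case bookkeeping, which the paper's choice of variables avoids. It is worth noting that your intermediate equivalence (no such $H$ exists iff $g$ has only the trivial zero) is exactly what Theorem \ref{eabcls4} needs downstream, so had the lemma been stated in terms of $g$, your argument would render Lemma \ref{quad} unnecessary altogether; as stated in terms of $f$, the bridge must be crossed somewhere, and you simply cross it here rather than there.
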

\begin{proof}
First assume that $G$ admits a subgroup $H \leq Z(G)$  of order $p$ such $|Z(G/H)|=p^2$. Obviously $H := \gen{\eta_{11}^a \eta_{12}^b}$ for some $a, b \in \mathbb{F}_p$ not simultaneously zero. Then there exist $i, j \in \mathbb{F}_p$ not both zero such that $[\beta_1^i \beta_2^j, \alpha_k] \in H$ for $k = 1, 2$; meaning, there exist $\lambda, \mu \in \mathbb{F}_p$ such that 
$$\eta_{11}^i \eta_{12}^j = (\eta_{11}^a \eta_{12}^b)^{\lambda}$$
 and 
 $$\eta_{11}^{mj} \eta_{12}^{i+nj} = (\eta_{11}^a \eta_{12}^b)^{\mu}.$$
  Obviously $\lambda \neq 0$. Comparing powers on both sides in these equations, we get
\begin{eqnarray*}
i-a \lambda= 0 \pmod p, \ \ j- b \lambda = 0\pmod p,\\
i+nj-b\mu = 0 \pmod p, \ \ mj - a\mu = 0 \pmod  p.
\end{eqnarray*}
Solving these equations for $i,j$ we further get
\begin{eqnarray*}
\lambda a+(n\lambda - \mu)b=0\pmod p\\
-\mu a+m\lambda b=0\pmod p.
\end{eqnarray*} 
Hence, the system of equations
\begin{eqnarray*}
\lambda x+(n\lambda - \mu)y=0\pmod p\\
-\mu x+m\lambda y=0\pmod p.
\end{eqnarray*}
admits a non-trivial solution $x = a$ and $y = b$ in $\mathbb{F}_p$. But it is possible only when the determinant $m\lambda ^2+n\lambda \mu - \mu^2 $ of the matrix of this system of equations is zero. Hence the equation $m\lambda ^2+n\lambda \mu - \mu^2 = 0$, with given coefficients $m$ and $n$,   admits a non-trivial solution $\lambda = \lambda_0 \in \mathbb{F}_p^*$ and $\mu = \mu_0$ in $\mathbb{F}_p$. 

Conversely, assume that $m \lambda^2 + n \lambda \mu - \mu^2=0$ admits a solutions $\lambda = \lambda_0$ ($\neq 0$) and $\mu = \mu_0$ in $\mathbb{F}_p$. Consider the matrix
\[
M=
  \begin{bmatrix}
    \lambda_0 & n\lambda_0 - \mu_0   \\
    -\mu_0 & m \lambda_0   \\
  \end{bmatrix}.
\]
Since the determinant of $M$ is  zero  modulo $p$,  the system of equations  $MA^t= \bf{0} $ admits a non-trivial solution, where $A^t$ denotes the transpose of the matrix $A=[a,b]$ and $\bf{0}$ denotes the $2 \times 1$ zero-matrix with entries in $\mathbb{F}_p$.   Let $a = a_0$ and $b = b_0$ be a non trivial solution of $MA^t= \bf{0} $.  Thus we get the following equations: 
\begin{eqnarray*}
\lambda_0 a_0+(n\lambda_0 - \mu_0)b_0=0\pmod p, \\
-\mu_0 a_0+m\lambda_0 b_0=0\pmod p.
\end{eqnarray*}

Let $H :=\gen{\eta_{11}^{a_0}  \eta_{12}^{b_0}}$ and  $\bar{G} := G/H$.     Obviously $[ \bar{\beta_1}^{a_0 } \bar{\beta_2}^{b_0 },  \bar{\alpha_1} ]= \bar{\eta}_{11}^{a_0}  \bar{\eta}_{12}^{b_0} = 1_{\bar{G}} $.  Also, using the preceding set of equations, we get
$$[ \bar{\beta_1}^{a_0 } \bar{\beta_2}^{b_0 },  \bar{\alpha}_2 ]= \bar{\eta}_{21}^{a_0}  \bar{\eta}_{22}^{b_0} =  \bar{\eta}_{11}^{m b_0} \bar{ \eta}_{12}^{a_0+nb_0}= \bar{\eta}_{11}^{ a_0 \mu_0 \lambda_0^{-1}}  \bar{\eta}_{12}^{  b_0 \mu_0 \lambda_0^{-1}} =  (\bar{\eta}_{11}^{a_0}  \bar{\eta}_{12}^{b_0})^{\mu_0 \lambda_0^{-1} } = 1_{\bar{G}}.$$
Hence $ \bar{\beta_1}^{a_0 } \bar{\beta_2}^{b_0 } \in Z(\bar{G})$. It  now easily follows that   $|\Z(\bar{G})| = p^2$, which completes the proof.    \hfill $\Box$

\end{proof}

\begin{thm}\label{eabcls4}
Let $G$ be a group of order $p^7$ and nilpotency class $4$ with $\gamma_2(G)$  elementary abelian of order $p^5$.  Then $\K(G) \neq \gamma_2(G)$ if and only if there exist a subgroup $H \leq Z(G)$  of order $p$ such $|Z(G/H)|=p^2$. Moreover, the commutator length of $G$ is at most $2$.
\end{thm}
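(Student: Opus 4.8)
The plan is to establish the two directions of the equivalence separately and then to treat the commutator length uniformly. I keep the notation fixed before Lemma~\ref{sbgrpH}: by Lemma~\ref{cls4}, $\Z(G) = \gamma_4(G) = \gen{\eta_{11},\eta_{12}}$ has order $p^2$, $\gamma_3(G) = \gen{\beta_1,\beta_2,\gamma_4(G)}$, $\eta_{22} = \eta_{11}^m\eta_{12}^n$, and (from the Hall--Witt computation in Lemma~\ref{cls4}) $\eta_{21} = \eta_{12}$. The easy direction is then immediate: if $H \leq \Z(G)$ has order $p$ with $|\Z(G/H)| = p^2$, then, since $H \leq \gamma_4(G) \leq \gamma_2(G)$, the group $G/H$ has order $p^6$, its commutator subgroup $\gamma_2(G)/H$ is elementary abelian of order $p^4$, and $|\Z(G/H)| = p^2$; Lemma~\ref{p6} gives $\K(G/H) \neq \gamma_2(G/H)$, whence $\K(G) \neq \gamma_2(G)$ by Lemma~\ref{quotient}. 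This half uses nothing about the quadratic forms.

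For the converse I argue by contraposition: assuming no such $H$ exists, I show $\K(G) = \gamma_2(G)$. By Lemma~\ref{sbgrpH} the absence of $H$ means $f(\lambda,\mu) = m\lambda^2 + n\lambda\mu - \mu^2$ is anisotropic over $\mathbb{F}_p$; in particular $m \neq 0$, since $m = 0$ makes $(1,0)$ a nontrivial zero of $f$. When $n \neq 0$, Lemma~\ref{quad} converts this into the statement that $g(\lambda,\mu) = \lambda^2 + n\lambda\mu - m\mu^2$ is anisotropic, while when $n = 0$ the same conclusion is an immediate check (a zero of $g=\lambda^2-m\mu^2$ with $\mu \neq 0$ would exhibit $m$ as a square, contradicting anisotropy of $f = m\lambda^2 - \mu^2$). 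Thus $g(s,t) \neq 0$ for every $(s,t) \neq (0,0)$, and this is exactly the surjectivity criterion I need: for $x = \alpha_1^s\alpha_2^t w$ with $w \in \gamma_2(G)$ and $(s,t) \neq (0,0)$, the relations $[\gamma_2(G),\gamma_3(G)] \leq \gamma_5(G) = 1$, $\eta_{21} = \eta_{12}$, $\eta_{22} = \eta_{11}^m\eta_{12}^n$ and the centrality of $\gamma_4(G)$ give
$$[x,\beta_1^i\beta_2^j] = [\alpha_1^s\alpha_2^t,\beta_1^i\beta_2^j] = \eta_{11}^{-(si+mtj)}\,\eta_{12}^{-(ti+sj+ntj)},$$
so the linear map $(i,j) \mapsto [x,\beta_1^i\beta_2^j]$ from $\mathbb{F}_p^2$ to $\gamma_4(G)$ has determinant $s^2 + nst - mt^2 = g(s,t)$. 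Hence $g(s,t) \neq 0$ makes this map bijective, so $[x,\gamma_3(G)] = \gamma_4(G)$ and $\gamma_4(G) \subseteq [x,G]$.

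It remains to cover $\gamma_2(G)$ modulo $\gamma_4(G)$ by such sets $[x,G]$. Put $\bar G := G/\gamma_4(G)$; it has order $p^5$, so $\K(\bar G) = \gamma_2(\bar G)$ by \cite{KM05}. Call an element \emph{good} if its image in $G/\gamma_2(G)$ is nontrivial, equivalently if its $\alpha$-part $(s,t)$ is nonzero. Each nontrivial element of $\gamma_2(\bar G)$ equals some commutator $[\bar u,\bar v]$, and since $[\gamma_2(\bar G),\gamma_2(\bar G)] \leq \gamma_4(\bar G) = 1$, at least one of $\bar u,\bar v$ is good; the identity $[u,v] = [v^u,u^{-1}]$, together with the fact that $v^u$ is good whenever $v$ is, lets me arrange the \emph{first} argument to be good. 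Letting $x_1,\dots,x_n \in G$ be lifts of representatives of the finitely many good elements arising this way, I obtain $\gamma_2(\bar G) = \bigcup_i[\bar x_i,\bar G]$, while the previous paragraph (applied with each $g(s_i,t_i)\neq 0$) gives $\gamma_4(G) \subseteq \bigcap_i[x_i,G]$. Lemma~\ref{prelemma2} with $H = \gamma_4(G) = \Z(G) \leq \gamma_2(G) \cap \Z(G)$ then yields $\gamma_2(G) = \bigcup_i[x_i,G] \subseteq \K(G)$, i.e.\ $\K(G) = \gamma_2(G)$.

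Finally, the commutator length is at most $2$ in all cases: since $g(1,0) = 1 \neq 0$, the computation above with $x = \alpha_1$ gives $\gamma_4(G) = [\alpha_1,\gamma_3(G)] \subseteq \K(G)$ unconditionally, and $\K(G/\gamma_4(G)) = \gamma_2(G/\gamma_4(G))$ because $G/\gamma_4(G)$ has order $p^5$ \cite{KM05}; Lemma~\ref{cmlength} then finishes. The conceptual crux, and the step I expect to require the most care, is the computation in the second paragraph: one must recognise that it is the companion form $g$, rather than $f$, which governs the surjectivity of $(i,j)\mapsto[x,\beta_1^i\beta_2^j]$ onto $\gamma_4(G)$, so that Lemma~\ref{quad} is precisely the bridge between this surjectivity and the existence of $H$ supplied by Lemma~\ref{sbgrpH}. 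Once that is in place, the covering step reduces to the elementary conjugation identity guaranteeing good witnesses, and the appeals to Lemmas~\ref{p6}, \ref{quotient}, \ref{prelemma2} and \ref{cmlength} are routine.
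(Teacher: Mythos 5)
Your handling of the equivalence itself is essentially correct, and the hard direction takes a genuinely different route from the paper's: where the paper proves the contrapositive by solving, case by case, an explicit polynomial system (OSE) so as to exhibit every element $\beta^u\beta_1^v\beta_2^w\eta_{11}^x\eta_{12}^y$ as a single commutator of two general elements, you instead pass to $G/\gamma_4(G)$, which has order $p^5$ so that \cite{KM05} applies, arrange good first arguments via the identity $[u,v]=[v^u,u^{-1}]$, and absorb the centre $\gamma_4(G)$ by the bilinearity computation whose determinant you correctly identify as $g(s,t)=s^2+nst-mt^2$; Lemma \ref{prelemma2} then finishes. This is cleaner and shorter than the paper's case analysis, and your separate treatment of $n=0$ (where Lemma \ref{quad} does not literally apply) is a nice point of care. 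The one omission in this direction is that you treat the standing normalisation $\gamma_4(G)=\gen{\eta_{11},\eta_{12}}$, $\eta_{22}=\eta_{11}^m\eta_{12}^n$ with $m,n$ not both zero as ambient notation. It is not automatic: the paper states it as an \emph{assumption} before Lemma \ref{sbgrpH}, and it can genuinely fail (e.g.\ $\eta_{11}=1$, or $\eta_{12}$ a power of $\eta_{11}$, or $\eta_{22}=1$). The paper's proof of this theorem spends a paragraph showing that, under your contrapositive hypothesis (no $H$ exists), every such degeneracy is impossible because it would immediately produce a subgroup $H$ with $|\Z(G/H)|=p^2$; your argument needs that paragraph too, but the fix is local and available, so I regard this as a repairable gap.

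The genuine failure is in your commutator-length argument. You claim that $g(1,0)=1$ gives $\gamma_4(G)=[\alpha_1,\gamma_3(G)]\subseteq\K(G)$ \emph{unconditionally}, but that computation presupposes the very normalisation above, which is exactly what can fail when $\K(G)\neq\gamma_2(G)$ --- and that is the only case in which the bound $2$ has any content, since $\K(G)=\gamma_2(G)$ means length $1$. Concretely, the paper's first example for this theorem has $\eta_{12}=\eta_{21}=1$, $\eta_{11}=\alpha_6$, $\eta_{22}=\alpha_7$, so $[\alpha_1,\gamma_3(G)]=\gen{\alpha_6}$ has order $p$ only; then $G/[\alpha_1,\gamma_3(G)]$ has order $p^6$, \cite{KM05} does not apply to it, and Lemma \ref{cmlength} cannot be invoked. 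The repair is the paper's argument: take instead $N:=[\alpha_1,\gamma_2(G)]=\gen{\beta_1,\eta_{11},\eta_{12}}$. Since $w\mapsto[\alpha_1,w]$ is a homomorphism on the abelian group $\gamma_2(G)$, $N\subseteq\K(G)$; it is normal because $[\beta_1,G]=\gen{\eta_{11},\eta_{12}}\subseteq N$ and $\gamma_4(G)$ is central; and it has order at least $p^2$ because $\beta_1\notin\gamma_4(G)$ while $\eta_{11},\eta_{12}$ cannot both be trivial (otherwise $\gamma_4(G)=\gen{\eta_{22}}$ would have order at most $p$, contradicting $|\gamma_4(G)|=p^2$). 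Then $|G/N|\le p^5$, so $\K(G/N)=\gamma_2(G/N)$ by \cite{KM05}, and Lemma \ref{cmlength} gives length at most $2$. In short: keep your determinant argument for the equivalence, but the "moreover" clause needs the larger subgroup $[\alpha_1,\gamma_2(G)]$, not $[\alpha_1,\gamma_3(G)]$.
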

\begin{proof}
 Let $H \leq Z(G)$ be a subgroup of order $p$ such that $|Z(G/H)|=p^2$. Then it follows from  Lemma \ref{p6} and Lemma \ref{quotient} that $ \K(G) \neq \gamma_2(G)$, which proves the `if' part. We provide a contrapositive prove of the `only if'   statement. Let there do not exist any  $H \leq Z(G)$ of order $p$ such $|Z(G/H)|=p^2$.  We assume $G=\gen{\alpha_1, \alpha_2}$.  Let $\beta$, $\beta_i$ and  $\eta_{ij}$ are as defined in  the introduction.   Then   $\gamma_2(G)/\gamma_3(G) = \gen{\beta \gamma_3(G)}$.  By Lemma \ref{cls4} we know   that $\Z(G) = \gamma_4(G)$ is of order $p^2$; hence  $\gamma_3(G)/\gamma_4(G)=\gen{\beta_1 \gamma_4(G),  \beta_2 \gamma_4(G)}$ is of order $p^2$ too. Using Hall-Witt identity
\begin{center}
$[\beta,  \alpha_1^{-1}, \alpha_2]^{\alpha_1}[\alpha_1, \alpha_2^{-1}, \beta]^{\alpha_2}[\alpha_2, \beta^{-1}, \alpha_1]^{\beta}=1,$
\end{center} 
 we get $\eta_{12}= \eta_{21}$. It  now follows from our assumption  that $\eta_{11}, \eta_{12}, \eta_{22}$ are all non-trivial elements of $\gamma_4(G)$. Indeed, for example, if $\eta_{11} =1$, then taking $H = \gen{\eta_{12}}$ we see that $|\Z(G/H)| = p^2$, a case which we are not considering.  Assume that $\gamma_4(G)=\gen{\eta_{11}, \eta_{22}}$.  Hence $\eta_{12}= \eta_{11}^r \eta_{22}^s$,  where $r, s \in \mathbb{F}_p$.  If $r=0$,  then taking  $H=\gen{\eta_{22}}$,  we get $\Z(\bar G)= \gen{\bar{\eta}_{11}, \bar{\beta}_2}$ is of order $p^2$; not possible.   If $s=0$,  then taking $H=\gen{\eta_{11}}$, we get  the same conclusion.    So let $r, s \in \mathbb{F}_p^*$.  Then we can take $\gamma_4(G)=\gen{\eta_{11}, \eta_{12}}$, and $\eta_{22}= \eta_{11}^m \eta_{12}^n$,  where $m,n \in \mathbb{F}_p$. The same conclusion holds when we take   $\gamma_4(G)=\gen{\eta_{12}, \eta_{22}}$.  Thus we can always consider   $\gamma_4(G)=\gen{\eta_{11}, \eta_{12}}$, and $\eta_{22}= \eta_{11}^m \eta_{12}^n$ for some $m, n \in \mathbb{F}_p$ not both zero.

Notice that $G$ satisfies all hypotheses of Lemma  \ref{sbgrpH}. Hence, under our supposition, the equation $m \lambda ^2+n\lambda \mu - \mu^2 = 0$  can only have the trivial solution  modulo $p$.  But, by Lemma \ref{quad}, it is possible if and only if the equation $ \lambda ^2+ n\lambda \mu - m \mu^2 = 0$   also  has  only the trivial solution  modulo $p$. Then, obviously,  $m \in \mathbb{F}_p^*$.  

We are going to prove that for any given choice of elements $u,v,w,x,y \in \mathbb{F}_p$,  there exist elements $a_i, b_i \in \mathbb{F}_p$, $1 \le i \le 5$,  such that 
\begin{center}
$[{\alpha_1}^{a_1} {\alpha_2}^{a_2} {\beta}^{a_3} {\beta_1}^{a_4} {\beta_2}^{a_5},  {\alpha_1}^{b_1} {\alpha_2}^{b_2} {\beta}^{b_3} {\beta_1}^{b_4} {\beta_2}^{b_5}]={\beta}^{u}{\beta_1}^{v}{\beta_2}^{w} {\eta_{11}}^{x} {\eta_{12}}^{y}.$
\end{center}
Solving both sides and comparing the powers in the preceding equation we get
\begin{eqnarray*}
a_1 b_2 -a_2 b_1 &=&u,\\
a_3 b_1 -a_1 b_3 + {\binom {a_1} 2} b_2 - a_2{\binom{b_1}2}   &=&v,\\
a_3 b_2 -a_2 b_3 + a_1{\binom {b_2} 2}  - {\binom{a_2}2}b_1 + a_1 a_2 b_2 - a_2 b_1 b_2  &=&w,\\
a_4 b_1 -a_1 b_4 + a_3{\binom {b_1} 2}  - {\binom{a_1}2}b_3 +{\binom {a_1} 3} b_2 - a_2{\binom{b_1}3}  + mA&=&x,\\
a_5 b_1 +a_4 b_2 + a_3 b_1 b_2 - a_1 b_5 -a_2 b_4 - a_1 a_2 b_3  +{\binom {a_2} 2} {\binom{b_1}2} && \\ 
- {\binom{a_1}2}{\binom{b_2}2} + a_2 b_2{\binom {a_1} 2} - a_2 b_2 {\binom{b_1}2}  +nA&=&y,
\end{eqnarray*}
 where $A= a_5b_2-a_2b_5 + a_3 \binom{b_2}2 - \binom{a_2}2 b_3 + a_1 \binom{b_2}3 - \binom{a_2}3 b_1 + a_1 \binom{a_2}2 b_2 - a_2 b_1 \binom{b_2}2 + a_1 a_2 \binom{b_2}2 - \binom{a_2}2 b_1 b_2$. We shall call this system of equations \emph{the original system of equations} (OSE) throughout the proof.

\textit{Case (i)}.  $u \neq 0$. Taking $a_2= a_5= b_1 = b_5= 0 $ and $a_1= 1$ in OSE, we get
\begin{eqnarray*}
 b_2  =u,  \ \ b_3    =-v, \ \  a_3 b_2  + {\binom {b_2} 2}   =w,\\
 - b_4   + m( a_3 \binom{b_2}2  +  \binom{b_2}3 )=x,\\
a_4 b_2   +n( a_3 \binom{b_2}2  +  \binom{b_2}3 )=y.
\end{eqnarray*}
Hence we get 
\begin{eqnarray*}
 b_2  =u,  \ \ b_3    =-v, \ \  a_3 =u^{-1}(w-  {\binom {u} 2}),\\
  b_4  = m( a_3 \binom{u}2  +  \binom{u}3 )-x,\ \ a_4 = u^{-1}\big( y-n( a_3 \binom{u}2  +  \binom{u}3 )\big),
\end{eqnarray*}
which are the required values of $a_i$'s and $b_i$'s.

\textit{Case (ii)}.   $u=0$.  Let us fix $a_1=a_2=0$. Then OSE reduces to 
\begin{eqnarray}
a_3 b_1    =v, \ \ a_3 b_2  =w,  \label{12}\\
a_4 b_1  + a_3{\binom {b_1} 2}  + m(a_5b_2 + a_3 \binom{b_2}2)=x, \label{13}\\
a_5 b_1 +a_4 b_2 + a_3 b_1 b_2 +n(a_5b_2 + a_3 \binom{b_2}2)=y.  \label{14}
\end{eqnarray}
 We now consider the following four possible subcases:
 
\textit{Subcase  (i)}.  $v=w=0$.  Taking $a_3=b_2=0$ and $b_1=1$ in \eqref{13} and \eqref{14},   we get  $a_4  = x$ and $a_5 =y$.  Notice that $b_j$, $3 \le j \le 5$, can take arbitrary values in $\mathbb{F}_p$.

\textit{Subcase (ii)}.  $v=0$,  $w \neq 0$.  Taking $b_1=0$ and $b_2=1$ in \eqref{12}, \eqref{13} and \eqref{14},  we get
$a_3= w$,   $a_5= xm^{-1} $ and $a_4=y-n a_5$. Again $b_j$, $3 \le j \le 5$, can take arbitrary values in $\mathbb{F}_p$.

\textit{Subcase (iii)}.  $v\neq 0$,  $w = 0$.  Taking $b_1 = 1$ and $b_2=0$ in \eqref{12}, \eqref{13} and \eqref{14},  we get   $a_3 = v$,  $a_4 = x $ and $a_5 = y$. Further again  $b_j$, $3 \le j \le 5$, can take arbitrary values in $\mathbb{F}_p$.

\textit{Subcase (iv)}.  $v\neq 0$, $w \neq 0$.  For this choice, we can choose  $b_1, b_2 \in \mathbb{F}_p^*$ such that  $a_3 := v \, b_1^{-1}= w \, b_2^{-1}$.
Rewriting \eqref{13} and \eqref{14},  we get 
\begin{eqnarray}
 b_1 a_4   + mb_2 a_5 = x-a_3{\binom {b_1} 2} - m  a_3 \binom{b_2}2, \label{15}\\
  b_2  a_4 + (b_1+ nb_2)a_5      = y-a_3 b_1 b_2- n a_3  \binom{b_2}2.\label{16}
\end{eqnarray}
Viewing $a_4$ and $a_5$ as variables, the determinant of the matrix of this system of equations is  
 $D := b_1^2+nb_1b_2 - mb_2^2$.  As explained above, in the second para of this proof, $D$ can not be zero. Hence we can solve the preceding system of equations to obtain $a_4$ and $a_5$. Taking $b_j$, $3 \le j \le 5$, any arbitrary elements of $\mathbb{F}_p$, we got the required $a_i$'s and $b_i$'s, and the proof of the main assertion is complete.   
 
 As we know $\beta_1 = [\beta, \alpha_1]$ and  $\beta_2 = [\beta, \alpha_2]$ are both non-trivial. Using the fact that $\eta_{12} = \eta_{21}$ and $|\gamma_4(G)| = p^2$, it follows that $\alpha_1$ can not centralise $\beta_1$ and $\beta_2$ both. Then $H := [\alpha_1, \gamma_2(G)] \subseteq \K(G)$ is a normal subgroup of $G$ having order at least $p^2$. For, $\gamma_2(G)$ being abelian, we have
$[\alpha_1, uv] = [\alpha_1, u] [\alpha_1, v]$ for all $u, v \in \gamma_2(G)$.
Since $|G/H| \le p^5$, it follows from \cite{KM05} that $\K(G/H) = \gamma_2(G/H)$. The proof is now complete by Lemma \ref{cmlength}.     \hfill $\Box$

\end{proof}


\section{Proof of Theorem A and Examples}

So far we have handled groups of order $p^7$ with $|\gamma_2(G)| = p^5$ and $d(\gamma_2(G)) \ge 4$. The situation when $d(\gamma_2(G)) \le 3$ has already been taken care of in the literature. So we are only left with the case when $|\gamma_2(G)| = p^4$ and $d(\gamma_2(G)) = 4$, which we'll consider now.

 \begin{lemma}\label{lastlem}
Let $G$ be a group of order $p^7$ with $|\gamma_2(G)| = p^4$ and $d(\gamma_2(G)) = 4$. Then $\K(G) \neq \gamma_2(G)$ if and only if one of the following holds:

(i) The nilpotency class of $G$ is $3$, $Z(G) \le \gamma_2(G)$  and   $|\Z(G)| = p^3$,

(ii) The nilpotency class of $G$ is $4$, $Z(G) \not\le \gamma_2(G)$ and   $|\Z(G)| = p^3$.

Moreover, the commutator length of $G$ is at most $2$.
\end{lemma}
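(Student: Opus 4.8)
The plan is to first pin down the structure forced by the hypotheses. Since $d(\gamma_2(G)) = 4$ and $|\gamma_2(G)| = p^4$, the Frattini quotient of $\gamma_2(G)$ already has order $p^4$, so $\gamma_2(G)$ is elementary abelian of order $p^4$ and $|G/\gamma_2(G)| = p^3$, whence $d(G) \le 3$. A short count rules out nilpotency class $2$: there $\gamma_2(G) \le Z(G)$ would make $\gamma_2(G) = [G,G]$ the image of a bilinear map on the generators, hence generated by at most $\binom{d(G)}{2} \le \binom{3}{2} = 3$ commutators, contradicting $d(\gamma_2(G)) = 4$. A class $c \ge 6$ is impossible too, since the $c-1$ lower central factors sitting inside $\gamma_2(G)$ would force $|\gamma_2(G)| \ge p^{c-1} > p^4$. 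Thus the class lies in $\{3,4,5\}$. I would next record, via the Hall--Witt identity together with Theorem \ref{max} and Theorem \ref{alc1} in the style of Lemmas \ref{cls4} and \ref{clsatlst5}, the precise interplay between the class, $|Z(G)|$ and whether $Z(G) \le \gamma_2(G)$; I expect this to yield exactly the dichotomy that, under $|Z(G)| = p^3$, class $3$ comes with $Z(G) \le \gamma_2(G)$ and class $4$ with $Z(G) \not\le \gamma_2(G)$, which is what lets cases (i) and (ii) be phrased cleanly.

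For the ``if'' direction, case (ii) (class $4$, $Z(G) \not\le \gamma_2(G)$, $|Z(G)| = p^3$) reduces routinely: choose a central subgroup $H$ of order $p$ with $H \cap \gamma_2(G) = 1$, so $\bar G := G/H$ has order $p^6$ and $\gamma_2(\bar G) \cong \gamma_2(G)$ is elementary abelian of order $p^4$. I would then verify $|Z(\bar G)| = p^2$ (one inequality is $Z(\bar G) \supseteq Z(G)/H$ of order $p^2$, and the reverse uses that $Z(G) \not\le \gamma_2(G)$ prevents the centre from growing in the quotient) and invoke Lemma \ref{p6} to get $\K(\bar G) \ne \gamma_2(\bar G)$, hence $\K(G) \ne \gamma_2(G)$ by Lemma \ref{quotient}.

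Case (i) (class $3$, $Z(G) \le \gamma_2(G)$, $|Z(G)| = p^3$) is the main obstacle, and it cannot be handled by a quotient reduction: since $Z(G) \le \gamma_2(G)$, every nontrivial central subgroup lies in $\gamma_2(G)$, so passing to any proper quotient drops $|\gamma_2|$ below $p^4$ and destroys the $\Phi(23)$ pattern. Indeed these $G$ are stem groups of order $p^7$ and therefore not isoclinic to any group of order $p^6$, so Lemma \ref{p6} is unavailable. My plan is to fix a convenient stem representative of the isoclinism class (legitimate by Lemmas \ref{hlemma} and \ref{prelemma}), determine its commutator relations through Hall--Witt, and write a general commutator $[x,y]$ in $\mathbb{F}_p$-coordinates on $\gamma_2(G)$. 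Exhibiting an element of $Z(G) \cap \gamma_2(G)$ that is not a commutator then reduces to the insolubility of a binary quadratic form equation over $\mathbb{F}_p$, which is precisely what Lemmas \ref{discriminant}, \ref{quad} and Corollary \ref{disccor} are built to detect; this mirrors the order-$p^6$ analysis underlying Lemma \ref{p6}, carried out one dimension higher. I expect the quadratic-form non-representation step here to be the genuinely delicate part.

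For the ``only if'' direction I would argue by contrapositive: assuming neither (i) nor (ii) holds, I split into the class $5$ case and the class $3$ or $4$ cases with $|Z(G)| \ne p^3$ (or the wrong containment), and prove $\K(G) = \gamma_2(G)$ in each. The tool is the covering Lemma \ref{prelemma2}: pick a central $H \le Z(G) \cap \gamma_2(G)$ and pass to $G/H$ of order $p^6$ (or $p^5$), now with $|Z(G/H)| \ne p^2$, so it is not a $\Phi(23)$ group and satisfies $\K = \gamma_2$ by the order-$p^6$ classification together with \cite{KM05}; lifting the covering of $\gamma_2(G/H)$ through the inclusion $H \subseteq \bigcap_i [x_i, G]$ returns $\K(G) = \gamma_2(G)$. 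Finally the commutator-length bound follows from Lemma \ref{cmlength}: in every case one can produce a normal $H \le \K(G)$, for instance $[\alpha_1, \gamma_2(G)]$ (normal since $\gamma_2(G)$ is abelian), with $G/H$ of order at most $p^5$, so that $\K(G/H) = \gamma_2(G/H)$ by \cite{KM05}.
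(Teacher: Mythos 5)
Your proposal misses the idea the paper's proof actually turns on, and as a result the two genuinely hard parts are left as plans rather than proofs. The paper disposes of this lemma in a few lines by reduction to the authors' earlier classification \cite[Theorem A]{MR}: when $\Z(G) \le \gamma_2(G)$ that theorem applies verbatim (this is your case (i), plus all the $\Z(G)\le\gamma_2(G)$ configurations needed for the ``only if'' direction); when $\Z(G) \not\le \gamma_2(G)$, the paper splits into $d(G)=2$ and $d(G)=3$ and shows, via Lemmas \ref{hlemma} and \ref{prelemma}, that $G$ is isoclinic to a $2$-generated group $M$ of order $p^6$ with $\Z(M)\le\gamma_2(M)$ and $|\Z(G)|=p\,|\Z(M)|$, then quotes \cite[Theorem A]{MR} for $M$ and transfers the conclusion back by isoclinism invariance. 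You never invoke \cite[Theorem A]{MR}, so for case (i) you propose to redo a quadratic-form classification (``I expect the quadratic-form non-representation step to be the genuinely delicate part''), and for the ``only if'' direction you propose unexecuted coverings via Lemma \ref{prelemma2} --- which requires exhibiting explicit elements $x_i$ with $\gamma_2(G)/H=\bigcup_i[x_iH,G/H]$ \emph{and} $H\subseteq\bigcap_i[x_i,G]$, the kind of multi-page computation carried out in Propositions \ref{nab} and \ref{eabcls4}. Neither is done, so the proposal is an outline, not a proof.

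Moreover, the one reduction you do carry out, case (ii), contains a step that fails. You need a central subgroup $H$ of order $p$ with $H\cap\gamma_2(G)=1$, i.e.\ an element of order $p$ in $\Z(G)\setminus\gamma_2(G)$; nothing guarantees this. The paper's own final example shows it can fail: for $G=\gen{\alpha_1,\dots,\alpha_7}$ with $[\alpha_1,\alpha_2]=\alpha_4$, $[\alpha_4,\alpha_1]=\alpha_5$, $[\alpha_4,\alpha_2]=\alpha_6$, $[\alpha_5,\alpha_1]=\alpha_7=\alpha_3^p$ and $\alpha_i^p=1$ ($i\ne 3$), one has $\Z(G)=\gen{\alpha_3}\times\gen{\alpha_6}\cong C_{p^2}\times C_p$ with $\alpha_3^p=\alpha_7$ and $\alpha_6,\alpha_7\in\gamma_2(G)$, so $\Omega_1(\Z(G))\le\gamma_2(G)$ and \emph{every} central subgroup of order $p$ lies inside $\gamma_2(G)$. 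This group satisfies (ii) (class $4$, $|\Z(G)|=p^3$, $\Z(G)\not\le\gamma_2(G)$, $\K(G)\ne\gamma_2(G)$), yet every quotient by a central subgroup of order $p$ has commutator subgroup of order only $p^3$, so Lemma \ref{p6} combined with Lemma \ref{quotient} can never be applied to it; the quotient route is not merely incomplete here but unrepairable, which is exactly why the paper argues up to isoclinism rather than through quotients. The commutator-length claim inherits the same problem: your subgroup $[\alpha_1,\gamma_2(G)]$ is not shown to have order at least $p^2$, whereas the paper again gets the bound from \cite[Theorem A]{MR}.
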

 \begin{proof} 
 By the given hypothesis  the exponent of  $\gamma_2(G)$ is $p$.   If $\Z(G) \le \gamma_2(G)$, then the assertion follows from \cite[Theorem A]{MR}. So assume that $Z(G) \not\le \gamma_2(G)$. Thus $|\Z(G)| \ge p^2$. Notice that, in this case,  the nilpotency class of $G$ is at least $4$. Obviously, $d(G)$ is either $2$ or $3$. If $d(G) = 2$, then there exists a minimal generating set $\{\alpha_1, \alpha_2\}$ such that $\alpha_2^p \in \Z(G) - \gamma_2(G)$, but $\alpha_2^{p^2} \in \gamma_2(G)$. Then, by Lemma \ref{hlemma}, there exists a $2$-generated group $M$ of order $p^6$ such that $M$ and $G$ are isoclinic and $\Z(M) \le \gamma_2(M)$.  Also $|\Z(G)| = p |\Z(M)|$.  It now  follows from  \cite[Theorem A]{MR} that $\K(M) \neq \gamma_2(M)$ if and only if the nilpotency class of $M$ is $4$ and  $\Z(M)$ is of order $p^2$. This, using Lemma \ref{prelemma},  simply tells that $\K(G) \neq \gamma_2(G)$ if and only if the nilpotency class of $G$ is $4$ and  $|\Z(G)| = p^3$.

We now assume that $d(G) = 3$. We can then choose a minimal generating set  $\{\alpha_1, \alpha_2, \alpha_3\}$ for $G$ such that $\alpha_3 \in \Z(G) - \gamma_2(G)$. By the given hypothesis $\alpha_3^p \in \gamma_2(G)$. Let $M := \gen{\alpha_1, \alpha_2}$. Then the nilpotency class of $M$ and $G$ are equal,  $\gamma_2(G) = \gamma_2(M)$,  $|M| = p^6$, $\Z(M) \le \gamma_2(M)$ and $|\Z(G)/\Z(M)| = p$. Notice that $M$ and $G$ are isoclinic. Again invoking  \cite[Theorem A]{MR}, $\K(M) \ne \gamma_2(M)$ if and only if the nilpotency class of $M$ is $4$ and  $\Z(M)$ is of order $p^2$. One can now easily conclude that $\K(G) \neq \gamma_2(G)$ if and only if the nilpotency class of $G$ is $4$ and  $|\Z(G)| = p^3$. That the commutator length of $G$ is at most $2$  follows from \cite[Theorem A]{MR}, which completes the proof. \hfill $\Box$
 
 \end{proof}

We are now ready to prove Theorem A.

\noindent {\it Proof of Theorem A.}   If $d(\gamma_2(G)) \leq 3$,  then $\K(G)=\gamma_2(G)$ by \cite[Theorem A]{iH20}.  Also when the nilpotency class of $G$ is $6$, then $\K(G) = \gamma_2(G)$ by Theorem \ref{max}. This proves assertion (1). So we only need to consider  $d(\gamma_2(G)) \ge 4$ and the nilpotency class of $G$  at most $5$ .  If $|\gamma_2(G)| =p^4$,   then   assertion (2)  follows from Lemma \ref{lastlem}.  The only case which remains is  $|\gamma_2(G)| = p^5$.  Notice that the nilpotency class of $G$ is at least $4$. We now go according to the nilpotency class of $G$. If the nilpotency class of $G$ is $4$, then  $\gamma_2(G)$ is abelian by Lemma \ref{nab1}, and therefore it is elementary abelian by Lemma \ref{clsatlst5}. Assertion (4) now follows from Theorem \ref{eabcls4}. 

Next assume that the nilpotency class of $G$ is $5$. If  $\gamma_2(G)$ is elementary ablelian, then assertion (5) follows from Proposition \ref{eabcls5}. If it is non-abelian of exponent $p$, then assertion (6) follows from   Proposition \ref{nab}. Finally if the exponent of $\gamma_2(G)$ is $p^2$, then assertion (3) follows from Lemma \ref{cls5}. The proof is now complete.  \hfill $\Box$

\vspace{.2in}

We conclude by exhibiting various examples of groups of order $p^7$ to show that no class of groups considered in Theorem A is void. These examples are constructed from the structural information of the groups obtained in  the preceding two sections, and have been verified for $p= 5, 7$ using GAP \cite{GAP}. For notational convenience, we use long generating set instead of minimal one.
The following two groups satisfy the hypotheses of Lemma \ref{cls5}:
  \begin{eqnarray*}
 G &=& \Big\langle  \alpha_1,  \alpha_2, \alpha_3, \alpha_4, \alpha_5, \alpha_6, \alpha_7  \mid\ [\alpha_1,  \alpha_2]=\alpha_3,  [\alpha_3, \alpha_1]=\alpha_4,  [\alpha_3,  \alpha_2]= \alpha_5, [\alpha_4,  \alpha_1]=\alpha_6,   \\
 & & \;\ [\alpha_6, \alpha_1]=[\alpha_5, \alpha_2] = \alpha_7 = \alpha_3^p,  \alpha_1^p=\alpha_5, \alpha_2^p=\alpha_6,  \alpha{_i}^p=1~ (4 \le i \le 7) \Big\rangle.
\end{eqnarray*}
For this group $G$,  $\gamma_2(G)$  is abelian of exponent $p^2$,  $|\Z(G)|=p$ and $\K(G) \neq \gamma_2(G)$.

  \begin{eqnarray*}
 G &=& \Big\langle  \alpha_1,  \alpha_2, \alpha_3, \alpha_4, \alpha_5, \alpha_6, \alpha_7  \mid\ [\alpha_1,  \alpha_2]=\alpha_3,  [\alpha_3, \alpha_1]=\alpha_4, [\alpha_3,  \alpha_2]= \alpha_5, [\alpha_4,  \alpha_1]=\alpha_6,  \\
 & & \;\ [\alpha_6, \alpha_2]=[\alpha_4, \alpha_3] = [\alpha_1, \alpha_5]= \alpha_7= \alpha_3^p,  \alpha_1^p=\alpha_6, \alpha_2^p=\alpha_5,  \alpha{_i}^p=1~ (4 \le i \le 7) \Big\rangle.
\end{eqnarray*}
This is an example when  $\gamma_2(G)$ is non-abelian of exponent $p^2$,  $|\Z(G)|=p$ and $\K(G) \neq \gamma_2(G)$.

\vspace{.2in}

The next example satisfies the hypotheses of  Proposition \ref{eabcls5}.  
   \begin{eqnarray*}
 G &=& \Big\langle  \alpha_1,  \alpha_2, \alpha_3, \alpha_4, \alpha_5, \alpha_6, \alpha_7  \mid\ [\alpha_1,  \alpha_2]=\alpha_3,  [\alpha_3, \alpha_1]=\alpha_4,  [\alpha_3,  \alpha_2]= \alpha_5, \\
 & & \;\    [\alpha_4,  \alpha_1]=\alpha_6, [\alpha_6,  \alpha_1]= \alpha_7,    \alpha{_i}^p=1~ (1 \le i \le 7) \Big\rangle.
\end{eqnarray*}
For this group $G$, $\gamma_2(G)$  is elementary abelian,  $|\Z(G)|=p^2$ and $\K(G) \neq \gamma_2(G)$

\vspace{.2in}

The following two groups satisfy the hypotheses of Proposition \ref{nab}:
  \begin{eqnarray*}
 G &=& \Big\langle  \alpha_1,  \alpha_2, \alpha_3, \alpha_4, \alpha_5, \alpha_6, \alpha_7  \mid\  [\alpha_1,  \alpha_2]=\alpha_3,  [\alpha_3, \alpha_1]=\alpha_4,  [\alpha_3,  \alpha_2]= \alpha_5,  \\
 & & \;\ [\alpha_4,  \alpha_1]=\alpha_6,   [\alpha_6, \alpha_2]=[\alpha_4, \alpha_3]= [\alpha_4,  \alpha_2]= \alpha_7,    \alpha{_i}^p=1~ (1 \le i \le 7) \Big\rangle.
\end{eqnarray*}
Notice that $\gamma_2(G)$  is non-abelian of exponent $p$,  $|\Z(G)|=p^2$ and $\K(G) \neq \gamma_2(G)$ for this group.

    \begin{eqnarray*}
 G &=& \Big\langle  \alpha_1,  \alpha_2, \alpha_3, \alpha_4, \alpha_5, \alpha_6, \alpha_7  \mid\  [\alpha_1,  \alpha_2]=\alpha_3,  [\alpha_3, \alpha_1]=\alpha_4,  [\alpha_5,  \alpha_1]=\alpha_6 \alpha_7, \\
 & & \;\ [\alpha_3,  \alpha_2]= \alpha_5,  [\alpha_4,  \alpha_2]= \alpha_6,      [\alpha_6, \alpha_1]=[\alpha_3, \alpha_4]= \alpha_7,     \alpha{_i}^p=1~ (1 \le i \le 7) \Big\rangle.
\end{eqnarray*}
This is a group with  $\gamma_2(G)$  non-abelian of exponent $p$,  $|\Z(G)|=p$ and $\K(G) = \gamma_2(G)$.

\vspace{.2in}

The following two groups are of nilpotency class $4$ and satisfy the hypotheses of Theorem \ref{eabcls4}:
    \begin{eqnarray*}
 G &=& \Big\langle  \alpha_1,  \alpha_2, \alpha_3, \alpha_4, \alpha_5, \alpha_6, \alpha_7  \mid [\alpha_1,  \alpha_2]=\alpha_3,  [\alpha_3, \alpha_1]=\alpha_4,  [\alpha_3,  \alpha_2]=\alpha_5, \\
 & & \;\ [\alpha_4,  \alpha_1]= \alpha_6,  [\alpha_5,  \alpha_2]= \alpha_7,  \alpha{_i}^p=1~ (1 \le i \le 7) \Big\rangle.
\end{eqnarray*}
For this group $G$, we have  $\gamma_2(G)$  is elementary abelian,  $|\Z(G)|=p^2$ and $\K(G) \neq \gamma_2(G)$.

   \begin{eqnarray*}
 G &=& \Big\langle  \alpha_1,  \alpha_2, \alpha_3, \alpha_4, \alpha_5, \alpha_6, \alpha_7  \mid [\alpha_1,  \alpha_2]=\alpha_3,  [\alpha_3, \alpha_1]=\alpha_4,  [\alpha_3,  \alpha_2]= \alpha_5, \\
 & & \;\ [\alpha_5,  \alpha_1]= [\alpha_4,  \alpha_2]=\alpha_7,       [\alpha_4,  \alpha_1]^{\nu}=   [\alpha_5,  \alpha_2]= \alpha_6^{\nu},      \alpha{_i}^p=1~ (1 \le i \le 7) \Big\rangle,
\end{eqnarray*}
where $\nu$ is a non-quadratic residue mod $p$.
Notice that $\gamma_2(G)$   is abelian,  $|\Z(G)|=p^2$ and $\K(G) = \gamma_2(G)$.

\vspace{.2in}

Finally we present two examples which satisfy the hypotheses of Lemma  \ref{lastlem}. These are as follow. 
   \begin{eqnarray*}
 G &=& \Big\langle  \alpha_1,  \alpha_2, \alpha_3, \alpha_4, \alpha_5, \alpha_6, \alpha_7  \mid [\alpha_1,  \alpha_2]=\alpha_4,  [\alpha_4, \alpha_1]=\alpha_5,  [\alpha_4,  \alpha_2]= \alpha_6, \\
 & & \;\ [\alpha_6,  \alpha_1]= [\alpha_5,  \alpha_2]=\alpha_7=\alpha_3^p,       \alpha{_i}^p=1~ (1 \le i \le 7,  i \neq 3) \Big\rangle.
\end{eqnarray*}
Notice that $\gamma_2(G)$   is abelian,  $|\Z(G)|=p^2$ , the nilpotency class of $G$ is $4$ and $\K(G) = \gamma_2(G)$

   \begin{eqnarray*}
 G &=& \Big\langle  \alpha_1,  \alpha_2, \alpha_3, \alpha_4, \alpha_5, \alpha_6, \alpha_7  \mid [\alpha_1,  \alpha_2]=\alpha_4,  [\alpha_4, \alpha_1]=\alpha_5,  [\alpha_4,  \alpha_2]= \alpha_6, \\
 & & \;\  [\alpha_5,  \alpha_1]=\alpha_7=\alpha_3^p,       \alpha{_i}^p=1~ (1 \le i \le 7, i \neq 3) \Big\rangle.
\end{eqnarray*}
For this group $G$,  $\gamma_2(G)$   is abelian,  $|\Z(G)|=p^3$,  the nilpotency class of $G$ is $4$ and $\K(G) \neq \gamma_2(G)$.

Examples of the groups having nilpotency class $3$ and satisfying the hypotheses of Lemma \ref{lastlem} are given in \cite{MR}. One might use GAP for many more examples for adequate primes.

\vspace{.2in}

\noindent{\bf Acknowledgements.} The authors thank Pramath Anamby  for his help in establishing Lemma \ref{quad}.

\end{document}